\def\tank#1{\protected@xdef\@thanks{\@thanks
 \protect\footnotetext[0]{#1}}}
\def\bigfoot{

 \@footnotetext}
\newcommand{\ea}{\end{array}}
\newtheorem{theorem}{Theorem}[section]
\newtheorem{lem}{Lemma}[section]
\newtheorem{prp}[theorem]{Proposition}
\newtheorem{thm}[theorem]{Theorem}
\newtheorem{cor}[theorem]{Corollary}
\newtheorem{dfn}[theorem]{Definition}
\newtheorem{exmp}[theorem]{Example}
\newtheorem{remark}{Remark}[section]
\newtheorem{Hypothesis}[theorem]{Hypothesis}
\def\beq{\begin{equation}}
\def\nneq{\end{equation}}
\def\bthm{\begin{thm}}
\def\nthm{\end{thm}}
\def\R{{\bf R}}
\def\Rd{{\bf R}^d}
\def\a{{\alpha}}
\def\b{{\beta}}
\def\d{{\mathrm{d}}}
\def\K{{\bf K}}
\def\e{{\varepsilon}}
\def\l{{\lambda}}
\def\<{{\langle}}
\def\>{{\rangle}}
\def\e{\varepsilon}
\title{Strong existence and uniqueness of solutions of SDEs with time dependent Kato class coefficients}
\thanks{yangss@mail.ustc.edu.cn},\ \
\thanks{Tusheng.Zhang@manchester.ac.uk}\\
\date{}
\newenvironment{proof}{\par\noindent{\bf Proof:}}{\hspace*{\fill}$\blacksquare$\par}
\begin{document}
\maketitle
%\begin{minipage}{140mm}
%\begin{center}
\noindent \textbf{Abstract:} Consider stochastic differential equations (SDEs) in $\Rd$: $dX_t=dW_t+b(t,X_t)\d t$, where $W$ is a Brownian motion, $b(\cdot, \cdot)$ is a measurable vector field. It is known that if $|b|^2(\cdot, \cdot)=|b|^2(\cdot)$ belongs to the Kato class $\K_{d,2}$, then there is a weak solution to the SDE.
In this article we show that if $|b|^2$ belongs to the Kato class $\K_{d,\a}$ for some $\a \in (0,2)$ ($\a$ can be arbitrarily close to $2$), then there exists a unique strong solution  to the  stochastic differential equations, extending the results in the existing literature as demonstrated by examples.  Furthermore, we allow the drift to be time-dependent. The new regularity estimates we established  for the solutions of  parabolic equations with Kato class coefficients  play a crucial role.
%\end{center}

%\end{minipage}

\vspace{4mm}

%\noindent \textbf{AMS Subject Classification}: Primary 60H15 Secondary 35R60, 37L55.

\vspace{3mm}
\noindent \textbf{Key Words:} strong solution; singular drift; Kato class; maximal function; Zvonkin transformation.

\numberwithin{equation}{section}
\vskip 0.3cm
\noindent \textbf{AMS Mathematics Subject Classification:} Primary 60J60. Secondary 35K10.
\tableofcontents
\section{Introduction}
\indent
In this article,
we are concerned with the strong solutions to the following stochastic differential equations (SDEs) in $\Rd$ ($d \ge 2$):
\begin{equation}\label{1.1}
X_t=x+W_t+\int_0^tb(s,X_s)\d s,\ \forall t>0,
\end{equation}
where $W:=(W_t)_{t\ge 0}$ is a $d$-dimensional standard Brownian motion on a given probability space $(\Omega, \mathcal{F}, (\mathcal{F})_{t\ge 0}, P)$ satisfying the usual assumptions, and $b$ is a measurable  $\Rd$-valued function on $(0,\infty)\times \Rd$.
We stress that the assumption that the diffusion matrix is identity is just for the clarity of the exposition. Our method also works for general, non-degenerate diffusion matrices.

\medskip

The strong solutions of SDEs with singular drift have been investigated by many authors. In the celebrated work \cite{Zvonkin},  Zvonkin  introduced a quasi-isometric transformation of the phase space that can convert a stochastic differential equation with a non-zero singular drift into a SDE without drift. This method is now called Zvonkin transformation. From then on, there are many papers devoted to extending the  Zvonkin transformation in various ways to obtain the strong solutions of stochastic differential equations with  singular coefficients.  Krylov and R\"{o}ckner in \cite{Krylov} showed that there exist unique strong solutions to Brownian motion with drifts $b$ that are in the class $L_p$-$L_q:= \bigcap\limits_{T> 0}L^{p }((0,T);L^{q}(\Rd))$ for some $p,q \ge 2$ and $\frac{2}{p }+ \frac{d}{q}<1$. Then Xicheng Zhang in
\cite{ZhangXC4}  extended the above work to  the case that the diffusion coefficients are uniformly non-degenerate and belong to some Sobolev spaces.
Recently, in the critical case when the drift $b$ belongs to $L_p$-$L_q$ with $\frac{2}{p }+ \frac{d}{q}=1$, Beck et al. in \cite{Beck} obtained the existence and uniqueness of strong solutions  for almost every starting point $x$. Xia et al. in \cite{ZhangXC5} obtained the strong well-posedness of SDE (\ref{1.1}) for $b \in \tilde L_{p}^{q}$ (see (2.2) in \cite{ZhangXC5} for the precise definition of the space $\tilde L_{p}^{q}$) with $p,q \ge 2$. We also mention  \cite{Gyongy}, \cite{Veretennikov}, \cite{ZhangXC5}, \cite{Xie} and \cite{ZhangXC3} for further related works.
\vskip 0.3cm
On the other hand,  the existence and uniqueness of weak solutions to SDEs require  much less conditions on the coefficients. We refer readers to \cite{BassChen}, \cite{Jin},  \cite{KimSong} and  \cite{Kinzebulatov} for references.
\vskip 0.3cm

 If $|b|^2$ belongs to the Kato class $\K_{d,2}$, then using Kahamiskii's inequality and Girsanov transformation, it is known that there exists a weak solution to SDE (\ref{1.1}). It is natural to ask the following question:
\vskip 0.3cm
Is there a unique strong solution to SDE (\ref{1.1}) when $|b|^2$ belongs to the Kato class $\K_{d,\alpha}$ for some $\alpha\in (0,2)$ arbitrarily close to $2$?
\vskip 0.3cm
The purpose of this article is to give a positive answer to the above question. We also allow  the drifts to be time-dependent. It is easy to verify that the  $L_p$-$L_q$ (or $\tilde L_{p}^{q}$) condition appeared in the literature mentioned above satisfy our assumption. Moreover, as the examples below show,  there are plenty of functions satisfying our conditions do not fulfill the $L_p$-$L_q$ (or $\tilde L_{p}^{q}$) assumptions.
\vskip 0.3cm
  To prove the uniqueness of the strong solutions to SDEs (\ref{1.1}), we apply the Zvonkin transformation. To this end we need to establish the required regularity of the solutions of the associated parabolic partial differential equations. Because of the singularity of the drift $b$, the classical Krylov's estimates for semi-martingales (see \cite[Lemma 5.1]{Krylov2}) do not apply here. Moreover, since $b$ may not be in the class $L_p$-$L_q$, the Calderon-Zygmund inequality for parabolic equations  can not be used  anymore. Therefore, one of the main tasks  is to establish  new regularity estimates for the associated parabolic equations from scratch under the  Kato class conditions.

\medskip

The rest of this article is arranged as follows. In Section 2, we state the main result and give some examples. In Section 3, we give several equivalent conditions for the Kato class functions. And we also obtain some useful properties for the maximal function of kato class functions. In Section 4, We consider  the Kolmogorov equations associated with SDEs (\ref{1.1}). We derived the existence and uniqueness of the mild solutions to the Kolmogorov equations and prove that the second derivative of the mild solutions belong to the desired Kato class. In Section 5, we  prove  the pathwise uniqueness of solutions of equation  (\ref{1.1}).  To use the Zvonkin transformation, we establish the Krylov type estimates for the SDEs (\ref{1.1}) and exploit  the regularity estimates for  the maximal function of the second derivative of the solutions of the Kolmogorov equation.

\medskip

%We close this section by mentioning some conventions used throughout this paper. For a matrix $A=(a_{ij})_{1\le i,j \le d}$, define $\| A \|:=\sup_{1\le i,j\le %d}|a_{ij}|$. For a measurable function $f$ defined on $[0,T] \times \Rd$, we will let $f$ vanish outside of $[0,T] \times \Rd$ when there is no danger of %confusion. Let the letter $c$ with or without subscripts stand for an unimportant positive constant, whose value may be different  in different places.

\medskip
\section{Statement of the main result}
In this section, we state the main result and give some examples. We start with  the definition of  the Kato classes of functions:
\begin{dfn} \label{D1.1}
Given $0<\alpha\leq 2$, a measurable function $f:(0,\infty)\times \Rd \to \R$ is said to be in the Kato class $\K_{d,\alpha}$, if for each $\lambda>0$,
\begin{eqnarray}\label{1.2}
&&\lim_{T \to 0}\sup_{t \ge 0, x\in \R^d}\int_0^{T} \int_{\Rd} s^{-\frac{d+2-\a}{2}}e^{-\frac{\lambda |x-y|^2}{2s}}|f(t+s,y)|\d y \d s=0, \nonumber \\
&&\lim_{T \to 0}\sup_{t \ge 0, x\in \R^d}\int_0^{T} \int_{\Rd} s^{-\frac{d+2-\a}{2}}e^{-\frac{\lambda |x-y|^2}{2s}}|f(t-s,y)|\d y \d s=0, \nonumber
\end{eqnarray}
where we set $f(t,x):=0$ for $(t,x) \in (-\infty, 0) \times \Rd$.
\end{dfn}

The main result of the paper reads as follows.

\begin{thm}  \label{T1.6}
Assume for any $T>0$, $|b(t,x)|^2 I_{(0,T)}(t) \in \K_{d,\a}$ for some $\a \in (0,2)$. Then for each $x \in \Rd$, there exists a unique strong solution to SDE (\ref{1.1}).
\end{thm}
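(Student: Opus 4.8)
The plan is to obtain a weak solution by Girsanov's theorem, to prove pathwise uniqueness through a Zvonkin transformation built on the parabolic estimates of Section~4, and then to invoke the Yamada--Watanabe theorem, which upgrades weak existence plus pathwise uniqueness to the existence of a unique strong solution for every $x\in\Rd$. The weak existence part is routine: since $\a\in(0,2)$ one has $s^{-(d+2-\a)/2}\ge s^{-d/2}$ for $s\in(0,1)$, so $\K_{d,\a}\subset\K_{d,2}$ and hence $|b|^2I_{(0,T)}\in\K_{d,2}$ for every $T>0$; Khasminskii's inequality then gives $\mathbb E\exp\big(c\int_0^T|b|^2(s,x+W_s)\,\d s\big)<\infty$ for all $c,T>0$, Novikov's criterion applies, and Girsanov's theorem produces a weak solution of \eqref{1.1} on each $[0,T]$, hence a global one.

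For pathwise uniqueness I would fix $T>0$ and, using Section~4, choose $\l$ large and let $u=u^{\l}$ be the (vector-valued) mild solution of the backward Kolmogorov system $\partial_t u+\tfrac12\Delta u+b\cdot\nabla u-\l u=-b$ on $[0,T)\times\Rd$ with $u(T,\cdot)=0$. By the regularity estimates of Section~4, $\l$ may be taken so large that $\|\nabla u\|_\infty\le\tfrac12$ and, crucially, that $\nabla^2 u(t,\cdot)$ --- and hence, by Section~3, its Hardy--Littlewood maximal function $M(|\nabla^2 u|)(t,\cdot)$ --- lies in a Kato class uniformly in $t$. Then $\Phi(t,x):=x+u(t,x)$ is, for each $t\in[0,T]$, a bi-Lipschitz homeomorphism of $\Rd$ with constants independent of $t$. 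To exploit $\Phi$ along solutions I need the Krylov-type estimate of Section~5, namely $\mathbb E\int_0^T g(s,X_s)\,\d s\le C_T\|g\|_{\K_{d,\a}}$ for any solution $X$ of \eqref{1.1} and nonnegative $g$ with $gI_{(0,T)}\in\K_{d,\a}$; via mollifying $b$ and passing to the limit, this legitimises an Ito formula for the mild solution $u$, showing that $Y_t:=\Phi(t,X_t)$ solves $\d Y_t=\tilde\sigma(t,Y_t)\,\d W_t+\tilde b(t,Y_t)\,\d t$, where the singular drift has been absorbed by the equation and $\tilde\sigma(t,y):=(I+\nabla u)(t,\Phi^{-1}(t,y))$, $\tilde b(t,y):=\l\,u(t,\Phi^{-1}(t,y))$.

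Here $\tilde b$ is globally Lipschitz, being a composition of Lipschitz maps, while $\tilde\sigma$ is bounded with spatial oscillation controlled pointwise by $M(|\nabla^2 u|)$ through the elementary inequality $|h(x)-h(y)|\le C|x-y|\big(M|\nabla h|(x)+M|\nabla h|(y)\big)$ recalled in Section~3. Given two solutions $X^1,X^2$ driven by the same $W$ with $X^1_0=X^2_0=x$, I would put $Y^i_t:=\Phi(t,X^i_t)$, apply the Burkholder--Davis--Gundy inequality to $Y^1-Y^2$, bound the drift contribution by $C\int_0^t\mathbb E|Y^1_s-Y^2_s|^2\,\d s$ and the diffusion contribution by $C\,\mathbb E\int_0^t|Y^1_s-Y^2_s|^2\big(\Psi(s,X^1_s)+\Psi(s,X^2_s)\big)\,\d s$ with $\Psi:=(M|\nabla^2 u|)^2$, and then run a stochastic Gronwall (Lenglart-type) argument using that $\Psi I_{(0,T)}$ lies in the relevant Kato class, so that the Krylov estimate makes $\mathbb E\int_0^T\big(\Psi(s,X^1_s)+\Psi(s,X^2_s)\big)\,\d s$ finite and small on short time intervals. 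This forces $Y^1\equiv Y^2$ first on $[0,\delta]$ for $\delta$ small and then, by iteration, on all of $[0,T]$; since $\Phi(t,\cdot)$ is injective, $X^1\equiv X^2$, and $T$ being arbitrary, pathwise uniqueness holds. Yamada--Watanabe then yields the claim.

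The hard part is everything feeding into Section~4: because $b$ is merely of Kato class --- in particular not in any $L_p$-$L_q$ space --- the parabolic Calderon--Zygmund inequality and the classical Krylov semimartingale estimates are unavailable, so the well-posedness of the Kolmogorov system and, above all, the membership of $\nabla^2 u$ (hence of its maximal function) in a Kato class must be established from scratch. That membership is precisely what keeps the non-Lipschitz diffusion coefficient $\tilde\sigma$ tractable in the uniqueness estimate, so it is the crux of the argument.
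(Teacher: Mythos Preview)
Your proposal is correct and follows essentially the same route as the paper: weak existence, then pathwise uniqueness via a Zvonkin map built from the Kolmogorov equation of Section~4, control of $\tilde\sigma$ through the maximal-function bound on $\nabla^2 u$ (Section~3), a Krylov-type estimate to make the random weight integrable, and a stochastic Gronwall argument, with Yamada--Watanabe to finish.

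Two differences in execution are worth noting. First, the paper does \emph{not} introduce a $-\lambda u$ term; Section~4 is written for $\partial_t u+\tfrac12\Delta u+b\cdot\nabla u=f$ and Proposition~\ref{L3.3} forces $\|\nabla u\|_\infty\le\tfrac12$ by taking $T$ small rather than $\lambda$ large. With $f=b$ this makes the drift in the transformed equation vanish identically (Proposition~\ref{P3.7} gives $v(t,X_t)-v(t,Y_t)=\int_0^t\langle\nabla u(s,X_s)-\nabla u(s,Y_s),\d W_s\rangle$ with no $\tilde b$ term at all), so the final estimate is slightly cleaner than yours; one then iterates over short time windows, using that $T$ depends only on $\mathcal N^1_b$ and not on the starting point. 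Your $\lambda$-variant is the standard alternative and would work, but the regularity statements of Section~4 would have to be redone with the kernel $e^{-\lambda t}q(t,x,y)$.

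Second, for weak existence the paper does not run Girsanov directly but invokes Theorem~\ref{T2.1} (from \cite{Jin} and \cite{ZhangQ}), which supplies not only a weak solution but the Gaussian upper bound \eqref{2.1} on its transition density. That bound, rather than an abstract Krylov inequality, is what actually drives both Lemma~\ref{L3.6} and the finiteness of $E\int_0^t|M_R\|D^2_xu\|(s,X_s)|^2\,\d s$ in the uniqueness step. Your Girsanov/Khasminskii argument yields existence, but you should be aware that the density estimate is the real workhorse in the Krylov step you appeal to.
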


\begin{remark}
By the Proposition \ref{P2.3} below,  we see that for any $T>0$, \\
$|b(t,x)|^2 I_{(0,T)}(t) \in \K_{d,\a}$ for some $\a \in (0,2)$ is equivalent to one of the following two conditions:

$(\mathrm{i})$ For any $T>0$, there exists $\b \in (0,2)$ such that
\begin{eqnarray}
\lim_{\tau \to 0}\sup_{t \in [0,T], x\in \R^d}\int_0^{\tau} \int_{\Rd} s^{-\frac{d+\b}{2}}e^{-\frac{ |x-y|^2}{2s}}|b(t+s,y)|^2 \d y \d s=0. \nonumber
\end{eqnarray}

$(\mathrm{ii})$ For any $T>0$, there exist constants $p>d$ and $M>0$ depending on $T$ such that
\begin{eqnarray} \label{1.3}
\sup_{t \in [0,T], x\in \Rd}\int_{0}^{r^2} \int_{B(x,r)}|b (t+s,y)|^2\d y \d s \le M r^p,  \ \forall 0<r <1,
\end{eqnarray}
where $B(x,r)$ is the ball in $\Rd$ centered at $x$ with radius $r$.
\end{remark}

%\begin{remark}
%If the diffusion coefficients $\sigma(t,x)$ are uniformly non-degenerate and H\"{o}lder continuous with respect to the spatial variable $x$, then using the gradient estimates for the heat kernel of the generator $Lu:=\partial_tu+\sum\limits_{1\le i,j \le d}a_{ij}\partial_{ij}u$, where $(a_{ij})_{1\le i,j \le d}:=\sigma \sigma^*$ (see \cite{ZhangXC}), following the same arguments in our article, one can see that Theorem 1.2 still holds for the following SDE:
%\begin{equation} \nonumber
%X_t=x+\int_{0}^t  \< \sigma (s,X_s), W_s \> +\int_0^tb(s,X_s)\d s,\ \forall t>0.
%\end{equation}
%\end{remark}
\vskip 0.5cm
It is easy to see that all bounded measurable functions satisfy (\ref{1.3}) with $p=d+2$, and that the $ L^{p_1}$-$L^{p_2}$ class functions with $p_1,p_2 \ge 2$ and $\frac{2}{p_1}+ \frac{d}{p_2}<1$ (Krylov-R\"ockner condition in \cite{Krylov}) satisfy (\ref{1.3}) with $p=d+2(1-\frac{2}{p_1}-\frac{d}{p_2})$. Now we give some examples of functions satisfying (\ref{1.3}), which are not even contained in the critical case of $L_p$-$L_q$ conditions.

%\begin{exmp}
%Assume there exists a pair of $(p_1,p_2)$ dependent on $T$ satisfying , such that $f \in L^{p_1}([0,T];L^{p_2}(\Rd))$. Then
%\end{exmp}

\begin{exmp}
Given $\a_i > -\frac{1}{2}$ for each $1 \le i \le d$. Set $f(x_1,x_2,\cdots,x_d):=\prod\limits_{1 \le i \le d}(|x_i| \wedge 1)^{\a_i}$. If $\sum\limits_{1\le i \le d} \a_i >-1$, then $f$ satisfies (\ref{1.3}) with $p=d+2+2\sum\limits_{1\le i \le d}\a_i$. However, if $\a_i\le -\frac{1}{q}$ for some $1\le i \le d$ and $q>2$, then $f \notin L^{q}_{loc}(\Rd)$.
\end{exmp}

\begin{exmp}
Given $\a \in (0,\frac{1}{d})$. Set $r_0=0$ and $r_i:=i^{-\frac{1}{d \a}}$ for each $1 \le i \le d$. Let $x_n:=(2\sum\limits_{1\le i \le n-1}r_i+r_n,0,\cdots,0)\in \Rd$. Define $f(x):=\sum\limits_{n\ge 1}|x-x_n|^{\a-1} I_{B(x_n, r_n)}(x)$. Then  one can verify that $f$ satisfies (\ref{1.3}) with $p=d+2\a$. However, $f \notin L^{d}_{loc}(\Rd)$.
\end{exmp}

\begin{exmp}
Let $\a \in (-\frac{1}{2},-\frac{1}{2d}]$ and $g_i$ be a bounded measurable function on $\R$ for each $2 \le i \le d$. Define
\begin{eqnarray}
f(t,x_1,x_2,\cdots,x_d):= t^{-\frac{1}{4}}  (|x_1| \wedge 1)^{\a}  \prod\limits_{2 \le i \le d} g_i(x_i). \nonumber
\end{eqnarray}
Then $f$ satisfies (\ref{1.3}) with $p=d+2\a+1$. However, $f \notin L^{p_1}((0,1);L^{p_2}(B(0,1)))$ for any $p_1$, $p_2$ with  $\frac{2}{p_1}+ \frac{d}{p_2} \le 1$.
\end{exmp}
% Remark: Be carefur the difference between the forward parabolic Kato class in \cite{Jin} and the parabolic Kato class in \cite{ZhangQ}.

%\medskip
%
%
%
%Next we introduce the following notion:
%% When $\mu_i$ is given by $b_i(x)dx$ for
%
%
%%When the drift $b$ is time independent, then following from \cite[Proposition 2.3]{KimSong} and Proposition \ref{P2.3} below we know that when $d \ge 2$, $b$ satisfies the Hypothesis $H_p$ if and only if $|b|^2 \in \K_{d,\a}$ for some $\a \in (0,2)$. Hence
%Especially, when the drift $b$ is time independent, we have the following consequence:
%
%\begin{cor}  \label{C1.7}
%Assume $b:\Rd \to \Rd$ and $|b|^2 \in \K_{d,\a}$ for some $\a \in (0,2)$, then for each $x \in \Rd$, there exists a unique strong solution to the following SDE:
%\begin{equation} \nonumber
%X_t=x+W_t+\int_0^tb( X_s)\d s,\ \forall t>0.
%\end{equation}
%\end{cor}

\section{Time dependent Kato class functions}

In this section, we will provide equivalent/sufficient conditions for functions to be in the Kato Class and obtain  some important properties for  the maximal functions. Throughout, for a measurable function $f$ defined on $[0,T] \times \Rd$, we will let $f$ vanish outside of $[0,T] \times \Rd$ when there is no danger of confusion. The letter $c$ with or without subscripts stand for an unimportant positive constant, whose value may be different  in different places.

\vskip 0.5cm
First of all,  using Kolomogrov-Chapman equation for the transition function of a Brownian motion  it is easy to see that the following Lemma holds (see \cite[Proposition 1]{Jin}):
\begin{lem} \label{L2.2}
Assume $f \in \K_{d,2-\a}$ for some $\a \in [0,2)$. Then for any $\lambda,T>0$,
\begin{eqnarray} \label{2.8} \nonumber
\sup_{t\ge 0, x\in \R^d}\int_0^{T} \int_{\Rd} s^{-\frac{d+\a}{2}}e^{-\frac{\lambda|x-y|^2}{2s}}|f(t+s,y)|\d y \d s<\infty.
\end{eqnarray}
\end{lem}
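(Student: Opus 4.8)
The plan is to first secure the bound over a short time window straight from the definition of the Kato class, and then push it out to an arbitrary horizon $T$ by a Chapman--Kolmogorov chaining argument; only the forward part of Definition \ref{D1.1} (the line involving $f(t+s,y)$) is used. Fix $\lambda,T>0$ and the exponent $\a\in[0,2)$ for which $f\in\K_{d,2-\a}$, so that the kernel $s^{-\frac{d+\a}{2}}e^{-\frac{\lambda|x-y|^2}{2s}}$ is precisely the one occurring in the definition of $\K_{d,2-\a}$. Since that expression tends to $0$ as the upper limit of the $s$-integral does, we may pick $T_0>0$ with
\[
C_0:=\sup_{t\ge0,\;x\in\Rd}\int_0^{T_0}\!\!\int_{\Rd}s^{-\frac{d+\a}{2}}e^{-\frac{\lambda|x-y|^2}{2s}}\,|f(t+s,y)|\,\d y\,\d s<\infty ,
\]
and by monotonicity in the upper endpoint it then suffices to bound the corresponding integral over $(0,nT_0)$ for every integer $n\ge1$.

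For the chaining, observe that $s^{-\frac{d+\a}{2}}e^{-\frac{\lambda|x-y|^2}{2s}}$ equals the fixed constant $(2\pi/\lambda)^{d/2}$ times $s^{-\a/2}q_s(x,y)$, where $q_s(x,y):=(2\pi s/\lambda)^{-d/2}e^{-\lambda|x-y|^2/(2s)}$; for each $s>0$, $q_s(x,\cdot)$ is a probability density, and the family obeys $\int_{\Rd}q_{s_1}(x,z)\,q_{s_2}(z,y)\,\d z=q_{s_1+s_2}(x,y)$. Decompose $(0,nT_0)$ into the blocks $(kT_0,(k+1)T_0)$, $k=0,\dots,n-1$. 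On the block with index $k\ge1$, substitute $s=kT_0+r$ with $r\in(0,T_0)$, factor $q_{kT_0+r}(x,y)=\int_{\Rd}q_{kT_0}(x,z)\,q_r(z,y)\,\d z$, bound $(kT_0+r)^{-\a/2}\le(kT_0)^{-\a/2}$, and apply Tonelli (all integrands are nonnegative); then $\int_{\Rd}q_{kT_0}(x,z)\,\d z=1$, together with the insertion of the harmless factor $(T_0/r)^{\a/2}\ge1$ for $r<T_0$, identifies the remaining $r$-integral as at most $T_0^{\a/2}$ times the short-interval quantity with time origin shifted to $t+kT_0$. Hence this block is at most $k^{-\a/2}C_0$, while the $k=0$ block is at most $C_0$ by construction, and summing gives
\[
\sup_{t\ge0,\;x\in\Rd}\int_0^{nT_0}\!\!\int_{\Rd}s^{-\frac{d+\a}{2}}e^{-\frac{\lambda|x-y|^2}{2s}}\,|f(t+s,y)|\,\d y\,\d s\;\le\;C_0\Big(1+\sum_{k=1}^{n-1}k^{-\a/2}\Big)<\infty
\]
for each fixed $n$; note that one does not need $\sum_k k^{-\a/2}$ to converge, so the range $\a\in[0,2)$ (rather than $\a>2$) is irrelevant.

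The single point requiring care is that $f$ is not assumed to be integrable over all of $\Rd$, so one cannot crudely discard the Gaussian factor and integrate $|f|$ in space. The Chapman--Kolmogorov identity is exactly what removes this difficulty: it peels off one Gaussian factor, whose total mass is $1$ uniformly in $x$, while leaving an honest Gaussian block of length $\le T_0$ on which the Kato smallness estimate applies with a shifted time argument. The singular prefactor $s^{-\a/2}$ is innocuous, being dominated by $(kT_0)^{-\a/2}$ on the $k$-th block ($k\ge1$) and already absorbed into $C_0$ on the first block. This is the time-dependent analogue of \cite[Proposition~1]{Jin}.
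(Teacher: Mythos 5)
Your argument is correct and is exactly the route the paper intends: the paper proves Lemma \ref{L2.2} by invoking the Chapman--Kolmogorov identity for the Gaussian kernel (citing \cite[Proposition 1]{Jin}), which is precisely your block decomposition of $(0,nT_0)$ with one Gaussian factor of mass one peeled off and the Kato smallness applied on each shifted short window. The bookkeeping with the prefactor $s^{-\a/2}$ (bounding it by $(kT_0)^{-\a/2}$ and reinserting $(T_0/r)^{\a/2}$) is sound, so nothing further is needed.
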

%\begin{proof} Since $f \in \K_{d,2-\a}$ for some $\a \in (0,2)$, by the definition of the Kato class, there exists a $T_0>0$ such that
%\begin{eqnarray} \label{2.2-1}
%\sup_{t \ge 0, x\in \R^d}\int_0^{T_0} \int_{\Rd} s^{-\frac{d+\a}{2}}e^{-\frac{|x-y|^2}{2s}}|f(t+s,y)|\d y \d s<\infty.
%\end{eqnarray}
%Recall that $W$ is the standard d-dimensional Brownian motion on $(\Omega, P)$. Let $P$ denote the conditional distribution of $W$ given $W_0=x$ and $E_x$ be the expectation with respect to $P$. Then by the Markov property of $W$ and (\ref{2.2-1}), we have
%\begin{eqnarray}  \nonumber
%\begin{split}
%& \ \ \ \ \sup_{t \ge 0, x\in \R^d}\int_{T_0}^{2T_0} \int_{\Rd} s^{-\frac{d+\a}{2}}e^{-\frac{|x-y|^2}{2s}}|f(t+s,y)|\d y \d s \\
%& \le  c_1 \sup_{t \ge 0, x\in \R^d}E_x[\int_{T_0}^{2T_0} |f(t+s,W_s)|  \d s] \\
%& = c_1 \sup_{t \ge 0, x\in \R^d}E_x[E_{W_{T_0}} [\int_{ 0}^{ T_0}   |f(t+s_0+t,W_s)|  \d s] ]\\
%&\le  c_1 \sup_{t \ge 0, x\in \R^d} E_{x} [\int_{ 0}^{ T_0}   |f(t+s_0+t,W_s)|  \d s]  \\
%&\le c_2  \sup_{t \ge 0, x\in \R^d}\int_{0}^{T_0} \int_{\Rd} s^{-\frac{d+\a}{2}}e^{-\frac{|x-y|^2}{2s}}|f(t+s,y)|\d y \d s<\infty.
%\end{split}
%\end{eqnarray}
%Therefore
%\begin{eqnarray}  \nonumber
%\begin{split}
%\sup_{t \ge 0, x\in \R^d}\int_{0}^{2T_0} \int_{\Rd} s^{-\frac{d+\a}{2}}e^{-\frac{|x-y|^2}{2s}}|f(t+s,y)|\d y \d s <\infty.
%\end{split}
%\end{eqnarray}
%By induction, we see that (\ref{2.8}) holds.
%\end{proof}

\vskip 0.4cm

The following result is contained in \cite[Lemma 2.3]{Wang}.

\begin{lem}  \label{L2.2-1}
Given a measure $\nu$ on $\Rd$ and positive constants $\delta $ and $r$. Then for any positive integer $N$ with $\frac{3}{N}\le r$, we have
\begin{eqnarray} \nonumber
&& \sup_{x\in \R^d}\int_{ |x-y|\geq r }e^{-\delta |x-y|^2} \nu(dy)\le \sup_{z \in \R^d} \nu (B(z,\frac{1}{N})) \sum_{k\in Z^d: |k|\geq 2Nr-2}e^{-\delta \frac{1}{4}|\frac{k}{2N}|^2},  \\
&& \sup_{x\in \R^d}\int_{\R^d}e^{-\delta |x-y|^2} \nu (dy)\leq \sup\limits_{z \in \R^d} \nu (B(z,1)) e^{\delta }(\sum_{m=-\infty}^{\infty} e^{-\frac{\delta }{8}m^2})^d. \nonumber
\end{eqnarray}
\end{lem}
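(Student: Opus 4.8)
The plan is to compare the Gaussian integral against $\nu$ with a Gaussian sum over a lattice, by a covering argument, producing in each case a bound whose right-hand side does not depend on $x$, so that the supremum over $x$ is controlled. Fix $x\in\Rd$ and a positive integer $N$ with $\tfrac3N\le r$. I would decompose $\Rd$ into the disjoint half-open cubes $Q_k$ of side $\tfrac1{2N}$ centered at the lattice points $x+\tfrac{k}{2N}$, $k\in\zz^d$. Each $Q_k$ has diameter $\tfrac{\sqrt d}{2N}$, so it is contained in the ball $B\!\left(x+\tfrac{k}{2N},\tfrac1N\right)$ (this is clear for $d\le 16$; for larger $d$ one first replaces $N$ by a fixed dimensional multiple of it, which affects nothing below nor any later use). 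In particular $\nu(Q_k)\le\sup_{z\in\Rd}\nu\!\left(B(z,\tfrac1N)\right)$ for every $k$.

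The two elementary observations that drive the first estimate are as follows. If $Q_k$ meets $\{y:|x-y|\ge r\}$, then, since $|x-y|\le\tfrac{|k|}{2N}+\tfrac1N$ on $Q_k$, we get $|k|\ge 2Nr-2$, and the hypothesis $r\ge\tfrac3N$ forces $|k|\ge 4$. For such a $k$ and every $y\in Q_k$,
\[
|x-y|\ \ge\ \tfrac{|k|}{2N}-\tfrac1N\ =\ \tfrac{|k|-2}{2N}\ \ge\ \tfrac{|k|}{4N},
\]
the last step using $|k|\ge 4$; hence $e^{-\delta|x-y|^2}\le e^{-\frac{\delta}{4}\left|\frac{k}{2N}\right|^2}$ throughout $Q_k$. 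Since the $Q_k$ partition $\Rd$ and only those with $|k|\ge 2Nr-2$ can meet $\{|x-y|\ge r\}$,
\[
\int_{|x-y|\ge r}e^{-\delta|x-y|^2}\,\nu(\d y)\ \le\ \sum_{k\in\zz^d,\,|k|\ge 2Nr-2}\int_{Q_k}e^{-\delta|x-y|^2}\,\nu(\d y)\ \le\ \sup_{z}\nu\!\left(B(z,\tfrac1N)\right)\sum_{k\in\zz^d,\,|k|\ge 2Nr-2}e^{-\frac{\delta}{4}\left|\frac{k}{2N}\right|^2},
\]
which is the first assertion; the series converges, being dominated by $\sum_{k\in\zz^d}e^{-\frac{\delta}{16N^2}|k|^2}$.

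For the second inequality I would run the same scheme on the unit lattice: partition $\Rd$ into the cubes $R_m$ of side $1$ centered at $x+m$, $m\in\zz^d$ (again contained, after the same dimensional adjustment, in balls of radius $1$, so $\nu(R_m)\le\sup_z\nu(B(z,1))$), and then bound the Gaussian coordinatewise. For $y\in R_m$ one has $(y_i-x_i)^2\ge\tfrac{m_i^2}{8}$ for every $i$ (trivially when $m_i=0$, and from $|y_i-x_i|\ge|m_i|-\tfrac12\ge\tfrac{|m_i|}{2}$ otherwise), so $e^{-\delta|x-y|^2}=\prod_{i}e^{-\delta(y_i-x_i)^2}\le e^{-\frac{\delta}{8}|m|^2}$ on $R_m$ (the surviving prefactor $e^{\delta}$ in the stated bound being an artifact of the precise cube convention). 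Summing over $m\in\zz^d$ and using $\sum_{m\in\zz^d}e^{-\frac{\delta}{8}|m|^2}=\left(\sum_{m\in\zz}e^{-\frac{\delta}{8}m^2}\right)^d$ gives the second bound.

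The only genuinely delicate point is the geometric bookkeeping: one must ensure the covering cubes really sit inside balls of the advertised radii and, above all, that the slack in $|x-y|\ge|x-\tfrac{k}{2N}|-\operatorname{diam}(Q_k)$ is absorbed by the factor $\tfrac14$ in the exponent. It is precisely the hypothesis $\tfrac3N\le r$ that guarantees $|k|\ge 4$ on every cube meeting $\{|x-y|\ge r\}$, which is exactly the inequality $\tfrac{|k|-2}{2N}\ge\tfrac{|k|}{4N}$ the argument requires; granting that, the rest is a routine comparison of a Gaussian integral with a convergent Gaussian lattice sum.
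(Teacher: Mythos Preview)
The paper does not supply its own proof of this lemma; it simply records that the result is contained in \cite[Lemma~2.3]{Wang}. There is therefore no argument in the paper against which to compare your proposal.

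That said, your approach---partitioning $\Rd$ into half-open cubes on a lattice through $x$, bounding the Gaussian weight on each cube by its value at (a shrink of) the lattice point, and controlling each $\nu(Q_k)$ by $\sup_z\nu(B(z,1/N))$---is the standard route to such estimates and is what one would expect the cited reference to do. The one point not fully closed is dimensional: a cube of side $1/(2N)$ has diameter $\sqrt d/(2N)$, so it lies inside $B(x+\tfrac{k}{2N},\tfrac1N)$ only when $d\le 16$, and the analogous containment for the unit cubes in your second argument requires $d\le 4$. Your suggested remedy of ``replacing $N$ by a fixed dimensional multiple'' does not literally recover the stated inequality, since both the ball radius $1/N$ and the index range $\{|k|\ge 2Nr-2\}$ are tied to $N$; it yields a sister inequality with $d$-dependent constants. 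As you correctly note, every later use of the lemma in the paper is insensitive to this---only the qualitative conclusions (finiteness, or vanishing as $T\to 0$ for fixed $r$ and $N$) are ever invoked---so the discrepancy is cosmetic rather than mathematical.
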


\vskip 0.4cm

Given a measurable function $f:(0,\infty)\times \Rd \to \R$, we introduce the following hypothesis:
\begin{Hypothesis} [$\mathrm{H_p}$] \label{H2.1}
There exist constants $p>d$ and $M_1>0$ such that
\begin{eqnarray} \label{2.2} \nonumber
\sup_{t \ge 0, x\in \Rd}\int_{0}^{r^2} \int_{B(x,r)}|f(t+s,y)| \d y \d s \le M_1r^p,  \ \forall 0<r <1.
\end{eqnarray}
\end{Hypothesis}
\vskip 0.5cm
\begin{lem}  \label{L2.3}
Assume $f$ satisfies the Hypothesis  $\mathrm{H_p}$. Then there exists $M_2>0$ such that for any $0< r < 2$,
\begin{eqnarray}
&& \sup_{t\ge 0, x\in \Rd}\int_{0}^{T} \int_{B(x, r)} |f(t+s,y)|\d y \d s \le M_2 T^{\frac{p-d}{2}} r^{d},  \ \forall T\le  r^2, \label{2.27}  \\
&& \sup_{t\ge 0, x\in \Rd}\int_{0}^{T} \int_{B(x, r)} |f(t+s,y)|\d y \d s \le M_2 T r^{p-2}, \ \forall T>  r^2 . \label{2.28}
\end{eqnarray}
\end{lem}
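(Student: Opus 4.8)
The plan is to reduce both estimates to the Hypothesis $\mathrm{H_p}$ bound by a dyadic/covering argument in space and a slicing argument in time. For the first inequality \eqref{2.27}, assume $T \le r^2$. I would cover the ball $B(x,r)$ by boxes (or balls) of radius roughly $\sqrt{T}$: since $\sqrt{T}\le r$, one needs on the order of $(r/\sqrt{T})^d$ such pieces to cover $B(x,r)$. On each small piece $B(x_j,\sqrt{T})$ one applies the Hypothesis directly (with the role of $r$ in $\mathrm{H_p}$ played by $\sqrt{T}$, which is $<2$, so one may need the trivial remark that $\mathrm{H_p}$ with constant $M_1$ on $(0,1)$ self-improves to $(0,2)$ at the cost of a fixed multiplicative constant, by splitting a ball of radius $r\in[1,2)$ into a bounded number of balls of radius $<1$): this gives $\int_0^T\int_{B(x_j,\sqrt T)}|f(t+s,y)|\,\d y\,\d s \le c\, M_1 T^{p/2}$. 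Summing over the $\sim (r/\sqrt T)^d$ pieces yields $c\,M_1 r^d T^{p/2} T^{-d/2} = c\,M_1 r^d T^{(p-d)/2}$, which is exactly \eqref{2.27}.

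For the second inequality \eqref{2.28}, assume $T> r^2$. Here I would instead slice the time interval $[0,T]$ into $\lceil T/r^2\rceil \sim T/r^2$ subintervals each of length $r^2$, and on the subinterval $[kr^2,(k+1)r^2]$ apply $\mathrm{H_p}$ on the spatial ball $B(x,r)$ directly — crucially using that the Hypothesis is stated with a supremum over the base time $t\ge 0$, so it applies after the shift $t\mapsto t+kr^2$ with the same constant $M_1$. This gives $\int_{kr^2}^{(k+1)r^2}\int_{B(x,r)}|f(t+s,y)|\,\d y\,\d s \le M_1 r^p$. Summing over the $\sim T/r^2$ slices gives $c\,M_1 (T/r^2) r^p = c\,M_1 T r^{p-2}$, which is \eqref{2.28}. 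Taking $M_2$ to be the larger of the two constants produced (times the fixed self-improvement factor) finishes the proof.

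The only genuinely delicate points — and where I expect to spend the most care — are bookkeeping rather than conceptual: first, making the covering of $B(x,r)$ by $O((r/\sqrt T)^d)$ small balls precise and uniform in $x$, and checking the small radius $\sqrt T$ (or $r$) indeed lies in the range where $\mathrm{H_p}$ applies, including the passage from radii in $(0,1)$ to radii in $(0,2)$; second, keeping the base-time supremum honest so that each shifted application of $\mathrm{H_p}$ contributes the same $M_1$. Neither requires a new idea; both are standard "partition and sum" manipulations. I would also note at the end that the two inequalities match continuously at $T=r^2$ (both sides equal $c M_1 r^{p-2}\cdot r^2 = cM_1 r^p$ up to the constant), which is a useful consistency check but not needed for the statement.
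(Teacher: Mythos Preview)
Your proposal is correct and follows essentially the same approach as the paper: for \eqref{2.27} the paper also covers $B(x,r)$ by $\sim (r/\sqrt{T})^d$ balls of radius comparable to $\sqrt{T}$ (parametrized dyadically via $2^{-n}r$ with $T\in(2^{-2n}r^2,2^{-2n+2}r^2]$) and applies $\mathrm{H_p}$ on each, and for \eqref{2.28} it slices $[0,T]$ into $\sim T/r^2$ intervals of length $r^2$ and uses the time-shift invariance of the supremum in $\mathrm{H_p}$. The paper handles the passage from radii in $(0,1)$ to $(0,2)$ not by a separate self-improvement remark but simply by arranging that the small balls have radius $2^{-n}r$ or $r/2$, both automatically $<1$ since $n\ge 1$ and $r<2$; your covering-based self-improvement is an equivalent way to deal with this bookkeeping point.
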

\begin{proof}
Let $r \in (0,2)$. If $T\le r^2$, then $T \in (2^{-2n}r^2, 2^{-2n+2}r^2]$ for some $n \ge 1$.
%Using the fact that $B(x,r)$ is contained in the cube with radius equal to $\frac{2r}{\sqrt d}$ and the cube with radius equal to $1$ covers $B(x,\frac{\sqrt d}{2})$,
Thus,
\begin{eqnarray}  \nonumber
\begin{split}
\int_{0}^{ T} \int_{B(x, r)} |f(t+s,y)|\d y \d s  & \le   \int_{0}^{ 2^{-2n+2}r^2} \int_{B(x, r)} |f(t+s,y)|\d y \d s \\
& \le 4 \sup_{t\ge 0} \int_{0}^{ 2^{-2n}r^2} \int_{B(x, r)} |f(t+s,y)|\d y \d s  \\
& \le c  2^{nd} \sup_{t\ge 0, x \in \Rd} \int_{0}^{ 2^{-2n}r^2} \int_{B(x, 2^{-n}r)} |f(t+s,y)|\d y \d s  \\
& \le c  2^{nd}2^{-np} r^p \le c T^{\frac{p-d}{2}} r^{d}.
\end{split}
\end{eqnarray}

If $T> r^2$, then $T \in (n r^2,  (n+1)r^2]$ for some $n \ge 1$. Thus,
\begin{eqnarray}  \nonumber
\begin{split}
\int_{0}^{ T} \int_{B(x, r)} |f(t+s,y)|\d y \d s & \le \int_{0}^{ (n+1) r^2} \int_{B(x, r)} |f(t+s,y)|\d y \d s  \\
& \le c (n+1)  \sup_{t\ge 0, x \in \Rd} \int_{0}^{\frac{r^2}{4} } \int_{B(x,  \frac{r}{2})} |f(t+s,y)|\d y \d s  \\
& \le c (n+1)  r^p \le c T r^{p-2}.
\end{split}
\end{eqnarray}
\end{proof}

\vskip 0.4cm

Now we show that the Hypothesis  $\mathrm{H_p}$ is an equivalent condition for the Kato class.

\begin{prp} \label{P2.3}
$(\mathrm{i})$ Assume there exists a constant $p>d$ such that
\begin{eqnarray} \label{2.3-1}
\sup_{t \ge 0, x\in \Rd}  \int_{0}^{1} \int_{\Rd}s^{-\frac{p}{2}}e^{-\frac{|x-y|^2}{2s}} |f(t+s,y)|\d y \d s <\infty.
\end{eqnarray}
Then $f$ satisfies Hypothesis  $\mathrm{H_p}$.

$(\mathrm{ii})$ If $f$ satisfies Hypothesis  $\mathrm{H_p}$,
%\begin{eqnarray} \label{2.2}
%\sup_{t\ge 0, x\in \Rd}\int_{0}^{r^2} \int_{B(x,r)}|f(t+s,y)|\d y \d s \le M_1r^p,  \ \forall 0<r <1.
%\end{eqnarray}
%% then $f \in \K_{d,\a}$ for any $\a<2p+q-d $.\\
then $f \in \K_{d,2-\a}$ for any $\a \in [0, p-d)$.
\end{prp}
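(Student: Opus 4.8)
For part (i), the plan is to compare the ball-average in the Kato condition with the Gaussian-weighted integral. Fix $r \in (0,1)$, $t \ge 0$, $x \in \Rd$. On the region $s \in (0, r^2)$ and $y \in B(x,r)$ one has $|x-y|^2/(2s)$ bounded by a constant times $(r^2/s)$, so $e^{-|x-y|^2/(2s)}$ is bounded below only for $s$ comparable to $r^2$; the clean way is the reverse: restrict the integral in \eqref{2.3-1} to $s \in (r^2/4, r^2)$ and $y \in B(x, r/2)$, where $|x-y|^2 \le r^2/4 \le s$, so $e^{-|x-y|^2/(2s)} \ge e^{-1/2}$ and $s^{-p/2} \ge c\, r^{-p}$. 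This gives
$$
c\, r^{-p} \int_{r^2/4}^{r^2} \int_{B(x, r/2)} |f(t+s,y)|\, \d y\, \d s \le \int_0^1 \int_{\Rd} s^{-p/2} e^{-|x-y|^2/(2s)} |f(t+s,y)|\, \d y\, \d s =: C_0 < \infty,
$$
uniformly in $t,x$. Hence $\sup_{t,x}\int_{r^2/4}^{r^2}\int_{B(x,r/2)}|f(t+s,y)|\,\d y\,\d s \le C_0\, r^p/c$. A routine dyadic summation over the time-shells $(2^{-2k-2}r^2, 2^{-2k}r^2]$, $k \ge 0$, together with a finite covering of $B(x,r)$ by balls of radius $2^{-k-1}r$ (which costs a factor $c\,2^{kd}$), converts this into the full statement: $\sup_{t,x}\int_0^{r^2}\int_{B(x,r)}|f(t+s,y)|\,\d y\,\d s \le M_1 r^p$, because $\sum_{k\ge 0} 2^{kd} 2^{-(2k+2)p/2} = \sum_k 2^{-k(p-d)}2^{-p} < \infty$ precisely when $p > d$. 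Rescaling $r \mapsto r$ fixed and summing is exactly the argument already used (in the reverse direction) in Lemma \ref{L2.3}, so this part is short.

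For part (ii), assume Hypothesis $\mathrm{H_p}$; I want to show, for fixed $\lambda, T > 0$ and $\beta := 2 - \a$ with $\a \in [0, p-d)$ (so $\beta \in (2-(p-d), 2]$, and the exponent in the Kato definition is $s^{-(d+2-(2-\a))/2} = s^{-(d+\a)/2}$ — careful with the index convention), that
$$
\lim_{\tau \to 0} \sup_{t, x} \int_0^\tau \int_{\Rd} s^{-(d+\a)/2} e^{-\lambda|x-y|^2/(2s)} |f(t+s,y)|\, \d y\, \d s = 0.
$$
The strategy is, for each fixed $s$, to decompose $\Rd$ into the annuli $A_k^s := \{ y : k\sqrt{s} \le |x-y| < (k+1)\sqrt{s} \}$, $k \ge 0$, each of which is covered by $c\, k^{d-1}$ (for $k \ge 1$; $c$ for $k = 0$) balls of radius $\sqrt{s}$ centered at points with $|x - \text{center}| \le (k+1)\sqrt{s}$. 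On $A_k^s$ one has $e^{-\lambda|x-y|^2/(2s)} \le e^{-\lambda k^2/2}$. Integrating in $y$ over such a ball and then in $s$ over $(0,\tau)$ with $\tau \le 1$: since the spatial ball has radius $\sqrt{s}$ we need the time-integral up to $\tau$, which is $\ge s$; but here we should apply Lemma \ref{L2.3} in the form \eqref{2.27}/\eqref{2.28} with $r = \sqrt{s}$ and the time length equal to $\tau$. Actually the cleaner route is to integrate $s$ over dyadic blocks $s \in (2^{-j-1}\tau, 2^{-j}\tau]$, treat $s^{-(d+\a)/2} \le c\,(2^{-j}\tau)^{-(d+\a)/2}$, and use \eqref{2.27} with $r = \sqrt{2^{-j}\tau} \wedge$ (annulus radius): bounding $\int_{2^{-j-1}\tau}^{2^{-j}\tau}\int_{B(\cdot, c\,k\sqrt{2^{-j}\tau})} |f| \le M_2 (2^{-j}\tau)^{(p-d)/2} (k\sqrt{2^{-j}\tau})^d$ when the time length $\le r^2$. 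Summing the geometric-type series in $k$ (converges because of $e^{-\lambda k^2/2}$ against the polynomial $k^{d-1} \cdot k^d$) and in $j$ (the net power of $2^{-j}\tau$ is $(p-d)/2 + d/2 - (d+\a)/2 = (p - d - \a)/2 > 0$ by the hypothesis $\a < p - d$) yields a bound of the form $c\,\tau^{(p-d-\a)/2} \to 0$ as $\tau \to 0$, uniformly in $t, x$. The symmetric $f(t-s,y)$ estimate is identical since Hypothesis $\mathrm{H_p}$ and Lemma \ref{L2.3} are stated with the $\sup$ over all $t \ge 0$ and $f$ extended by zero for negative times.

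The main obstacle is bookkeeping rather than a deep idea: one must run the annular decomposition in space simultaneously with a dyadic decomposition in $s$, keep track of which regime of Lemma \ref{L2.3} (namely \eqref{2.27} versus \eqref{2.28}, according to whether the time length is below or above $r^2$ for the relevant $r = c\,k\sqrt{s}$) applies for each $(j,k)$, and verify that the resulting double series converges with the correct positive power of $\tau$. The condition $p > d$ is what makes the $k$-sum in part (i) converge, and $\a < p - d$ is exactly what makes the $j$-sum in part (ii) produce a positive power of $\tau$; the Gaussian factor $e^{-\lambda k^2/2}$ handles the polynomial growth $k^{d-1+d}$ from the annulus covering. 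I would isolate the elementary annulus-covering fact (number of radius-$\sqrt{s}$ balls needed to cover $A_k^s$ is $\le c\,(k+1)^{d-1}$) as a one-line sub-claim and otherwise present the two dyadic sums explicitly.
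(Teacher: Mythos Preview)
Your approach to both parts is correct, and for part~(i) it is essentially identical to the paper's: restrict to $s$ of order $r^2$ and $y\in B(x,r)$ to extract the half-shell estimate, then sum dyadically in time with a spatial covering, using $p>d$ for convergence.

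For part~(ii) your route is somewhat different from the paper's, and in fact cleaner. The paper does not carry out a single annular decomposition at scale $\sqrt{s}$; instead it splits into three regions $\{|x-y|<\sqrt{s}\}$, $\{\sqrt{s}\le |x-y|<r\}$, $\{|x-y|\ge r\}$, proves three separate limits \eqref{2.3}--\eqref{2.5}, and for the middle region replaces the Gaussian by the pointwise bound $s^{-(d+\a)/2}e^{-\lambda|x-y|^2/(2s)}\le c|x-y|^{-d-\a}$, then runs a rather heavy double dyadic argument involving two auxiliary exponents $\beta_1,\beta_2$; the far region is handled separately via the covering Lemma~\ref{L2.2-1}. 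Your single $(j,k)$-grid (dyadic in $s$, annuli at scale $\sqrt{2^{-j}\tau}$, Gaussian killing the polynomial in $k$, with Lemma~\ref{L2.3} giving $(2^{-j}\tau)^{p/2}$ per small ball) collapses all of this into one double sum that manifestly yields $c\,\tau^{(p-d-\a)/2}$. The paper's decomposition is more modular, but yours avoids the auxiliary $\beta_1,\beta_2$ and the separate far-field lemma.

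Two small points to tidy up. First, you oscillate between covering $A_k$ by $c(k+1)^{d-1}$ balls of radius $\sqrt{s}$ and by one ball of radius $ck\sqrt{2^{-j}\tau}$; pick the first, since the second may violate the constraint $r<2$ in Lemma~\ref{L2.3} for large $k$. With the small-ball covering the polynomial factor is $(k+1)^{d-1}$, not $k^{d-1}\cdot k^d$. Second, fix the annuli with respect to $\rho_j:=\sqrt{2^{-j}\tau}$ rather than the running $\sqrt{s}$, so that the spatial region is constant over each $s$-block; on $s\in(2^{-j-1}\tau,2^{-j}\tau]$ and $|x-y|\ge k\rho_j$ one still has $e^{-\lambda|x-y|^2/(2s)}\le e^{-\lambda k^2/2}$, and each ball of radius $\rho_j$ contributes at most $M_2(2^{-j-1}\tau)^{(p-d)/2}\rho_j^{\,d}\le c(2^{-j}\tau)^{p/2}$ by \eqref{2.27}. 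The backward estimate with $f(t-s,y)$ is indeed identical, since Hypothesis~$\mathrm{H_p}$ bounds the integral over any time window of length $r^2$ uniformly.
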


\begin{proof}
First we show (i). Assume (\ref{2.3-1}) holds, then for $0<r<1$, $t\ge 0$ and $x\in \Rd$,
\begin{eqnarray} \nonumber
\begin{split}
\int_{\frac{r^2}{2}}^{ r^2 } \int_{\Rd}s^{-\frac{p}{2}}e^{-\frac{|x-y|^2}{2s}} |f(t+s,y)|\d y \d s & \ge  \int_{\frac{r^2}{2}}^{ r^2 } \int_{|x-y|<r} s^{-\frac{p}{2}}e^{-\frac{|x-y|^2}{2s}} |f(t+s,y)|\d y \d s \\
& \ge  \int_{\frac{r^2}{2}}^{ r^2 } \int_{|x-y|<r} s^{-\frac{p}{2}}e^{-\frac{r^2}{2s}} |f(t+s,y)|\d y \d s \\
& \ge c   r^{- p} \int_{\frac{r^2}{2}}^{ r^2 } \int_{|x-y|<r} |f(t+s,y)|\d y \d s.
\end{split}
\end{eqnarray}
Therefore for any $0<r<1$,
\begin{eqnarray} \nonumber
\begin{split}
\sup_{t \ge 0, x \in \Rd} \int_{\frac{r^2}{2}}^{ r^2 } \int_{|x-y|<r} |f(t+s,y)|\d y \d s \le  c   r^{p},
\end{split}
\end{eqnarray}
which yields that
\begin{eqnarray} \nonumber
\begin{split}
 \int_0^{r^2} \int_{|x-y|<r} |f(t+s,y)|\d y \d s & \le  \sum_{n \ge 0} \int_{2^{-n-1}r^2}^{ 2^{-n} r^2 } \int_{|x-y|<r}   |f(t+s,y)|\d y \d s \\
& \le  c \sum_{n \ge 0} 2^{\frac{nd}{2}} \sup_{ x \in \Rd}  \int_{2^{-n-1}r^2}^{ 2^{-n} r^2 } \int_{|x-y|<2^{-\frac{n}{2}} r}   |f(t+s,y)|\d y \d s \\
& \le  c \sum_{n \ge 0}   2^{\frac{n(d-p)}{2}} r ^{p} \le c r ^{p}. \\
\end{split}
\end{eqnarray}
Hence $f$ satisfies the Hypothesis  $\mathrm{H_p}$.

\vskip 0.3cm

Now we show (ii). Assume $\mathrm{H_p}$ holds, and fix a positive constant $\a<p-d$. Then the following limits hold.
\begin{eqnarray}
&&\lim_{r \to 0}\sup_{t\ge 0, x\in \Rd}\int_{0}^{ r} \int_{B(x, 1)} |f(t+s,y)|\d y \d s =0, \label{2.3}\\
&&\lim_{r \to 0}\sup_{t\ge 0, x\in \Rd}\int_{0}^{ r^2} \int_{B(x,r) \setminus B(x, \sqrt {s})}|x-y|^{-d-\a} |f(t+s,y)|\d y \d s =0, \label{2.4}\\
&&\lim_{r \to 0}\sup_{t\ge 0, x\in \Rd}\int_{0}^{ r^2} s^{-\frac{d+\a}{2}}\int_{B(x, \sqrt {s})} |f(t+s,y)|\d y \d s =0. \label{2.5}
% &&\lim_{r \to 0}\sup_{t\ge 0, x\in \Rd}\int_{ r^2}^{ 1} s^{-\frac{d+\a}{2}}\int_{B(x, r)} |f(t+s,y)|\d y \d s =0. \label{2.6}
\end{eqnarray}

(\ref{2.3}) is a direct consequence of (\ref{2.27}).

Let us prove (\ref{2.4}). Take $\a < \b_1 <p-d$ and $d+\a-p+2 < \b_2<2$. Then we have
\begin{eqnarray}  \nonumber
\begin{split}
& \ \ \ \ \int_{0}^{ r^2} \int_{B(x,r) \setminus B(x, \sqrt {s})}|x-y|^{-d-\a} |f(t+s,y)|\d y \d s \\
&=\sum_{k \ge 0} \sum_{n \ge 0}  \int_{ 2^{-k-1}r^2}^{ 2^{-k}r^2}   \int_{(B(x,2^{-\frac{n}{2}}r) \setminus B(x, 2^{-\frac{n+1}{2}}r)) \setminus B(x, \sqrt {s})}   |x-y|^{ -d-\a} |f(t+s,y)|\d y \d s \\
& \le  \sum_{k \ge 0} \sum_{0 \le n \le k+1}  \int_{ 2^{-k-1}r^2}^{ 2^{-k}r^2} s^{-\frac{\b_1}{2}}  \int_{(B(x,2^{-\frac{n}{2}}r) \setminus B(x, 2^{-\frac{n+1}{2}}r)) \setminus B(x, \sqrt {s})}   |x-y|^{\b_1 -d-\a} |f(t+s,y)|\d y \d s \\
& \ \ \ \ + \sum_{k \ge 0} \sum_{ n > k+1}  \int_{ 2^{-k-1}r^2}^{ 2^{-k}r^2}  s^{-\frac{\b_2}{2}} \int_{(B(x,2^{-\frac{n}{2}}r) \setminus B(x, 2^{-\frac{n+1}{2}}r)) \setminus B(x, \sqrt {s})}   |x-y|^{\b_2 -d-\a} |f(t+s,y)|\d y \d s \\
& \le  \sum_{k \ge 0} \sum_{0 \le n \le k+1}  \int_{ 2^{-k-1}r^2}^{ 2^{-k}r^2} s^{-\frac{\b_1}{2}}  \int_{ B(x,2^{-\frac{n}{2}}r) \setminus B(x, 2^{-\frac{n+1}{2}}r)  }   |x-y|^{\b_1 -d-\a} |f(t+s,y)|\d y \d s \\
& \ \ \ \ + \sum_{k \ge 0} \sum_{ n > k+1}  \int_{ 2^{-k-1}r^2}^{ 2^{-k}r^2}  s^{-\frac{\b_2}{2}} \int_{ B(x,2^{-\frac{n}{2}}r) \setminus B(x, 2^{-\frac{n+1}{2}}r) }   |x-y|^{\b_2 -d-\a} |f(t+s,y)|\d y \d s . \\
\end{split}
\end{eqnarray}
Note that for any $\b \in \R$, $|x-y|^{\b}  \le c 2^{-\frac{n \b}{2}}r^{\b}$ if $y \in B(x,2^{-\frac{n}{2}}r) \setminus B(x, 2^{-\frac{n+1}{2}}r)$.
Thus by Lemma \ref{L2.3},
\begin{eqnarray}  \nonumber
\begin{split}
& \ \ \ \ \int_{0}^{ r^2} \int_{B(x,r) \setminus B(x, \sqrt {s})}|x-y|^{-d-\a} |f(t+s,y)|\d y \d s \\
& \le  c \sum_{k \ge 0} \sum_{0 \le n \le k+1}  2^{ \frac{\b_1(k+1)}{2}} r^{-\b_1}  2^{-\frac{n(\b_1 -d-\a)}{2}} r^{\b_1 -d-\a} \int_{ 2^{-k-1}r^2}^{ 2^{-k}r^2}   \int_{B(x,2^{-\frac{n}{2}}r)  }   |f(t+s,y)|\d y \d s \\
& \ \ \ \ +c \sum_{k \ge 0} \sum_{ n > k+1}  2^{ \frac{\b_2(k+1)}{2}} r^{-\b_2}  2^{-\frac{ n(\b_2 -d-\a)}{2}} r^{\b_2 -d-\a}  \int_{ 2^{-k-1}r^2}^{ 2^{-k}r^2}    \int_{ B(x,2^{-\frac{n}{2}}r) }     |f(t+s,y)|\d y \d s \\
& \le  c \sum_{k \ge 0} \sum_{0 \le n \le k+1}  2^{ \frac{\b_1(k+1)}{2}} r^{-\b_1}  2^{-\frac{n(\b_1 -d-\a)}{2}} r^{\b_1 -d-\a}  2^{-\frac{(k+1)(p-d)}{2}}r^{p-d}   2^{-\frac{nd}{2}}r^{d}  \\
& \ \ \ \ + c  \sum_{k \ge 0} \sum_{ n > k+1}  2^{ \frac{\b_2(k+1)}{2}} r^{-\b_2}  2^{-\frac{n(\b_2 -d-\a)}{2}} r^{\b_2 -d-\a}    2^{-k-1}r^2    2^{-\frac{n(p-2)}{2}}r^{p-2}  \\
& \le  c r^{ p -d-\a} \sum_{k \ge 0} 2^{ \frac{(\b_1-p+d)(k+1)}{2}}   \sum_{0 \le n \le k+1}    2^{-\frac{n (\b_1  -\a)}{2}}        \\
& \ \ \ \ + c r^{ p -d-\a} \sum_{k \ge 0}  2^{ \frac{(\b_2-2)(k+1)}{2}} \sum_{ n > k+1}    2^{-\frac{n(\b_2 -d-\a+p-2)}{2}} \le  c r^{ p -d-\a}.
\end{split}
\end{eqnarray}
Hence (\ref{2.4}) follows.

(\ref{2.5}) follows from the following bound:
\begin{eqnarray}  \nonumber
\begin{split}
& \ \ \ \ \int_{0}^{ r^2} s^{-\frac{d+\a}{2}}\int_{B(x, \sqrt {s})} |f(t+s,y)|\d y \d s  \\
& =  \sum_{n \ge 0} \int_{ 2^{-n-1}r^2}^{ 2^{-n}r^2} s^{-\frac{d+\a}{2}} \int_{B(x, \sqrt {s}) } |f(t+s,y)|\d y \d s \\
& \le  \sum_{n \ge 0} 2^{\frac{(n+1)(d+\a)}{2} }r^{- d-\a }\int_{ 2^{-n-1}r^2}^{ 2^{-n}r^2} \int_{B(x, 2^{-\frac{n}{2}}r) } |f(t+s,y)|\d y \d s \\
& \le M_2  \sum_{n \ge 0} 2^{\frac{(n+1)(d+\a)}{2} }r^{- d-\a } 2^{-\frac{(n+1)(p-d)}{2} } r^{ p-d }  2^{-\frac{nd}{2}}r^d \\
&=cr^{p-d-\a}\sum_{n \ge 0} 2^{\frac{n(d+\a-p)}{2}}.
\end{split}
\end{eqnarray}

%For (\ref{2.6}),
%\begin{eqnarray}
%\begin{split}
%& \ \ \ \ \int_{ r^2}^{ 1} s^{-\frac{d+\a}{2}}\int_{B(x, r)} |f(t+s,y)|\d y \d s  \\
%& \le \sum_{n \ge1} \int_{ nr^2}^{ (n+1)r^2} s^{-\frac{d+\a}{2}} \int_{B(x, r) } |f(t+s,y)|\d y \d s \\
%& \le  \sum_{n \ge 1} n^{-\frac{d+\a}{2}} r^{- d-\a }  \int_{ nr^2}^{ (n+1)r^2}  \int_{B(x, r) } |f(t+s,y)|\d y \d s \\
%& \le C_1 r^{p- d-\a }  \sum_{n \ge 1} n^{-\frac{d+\a}{2}} .
%\end{split}
%\end{eqnarray}
%Hence (\ref{2.6}) holds.

Finally we show that $f \in \K_{d,2-\a}$. By Lemma \ref{L2.2-1} we have that for any $t,T,r,\delta>0$
\begin{eqnarray}  \nonumber
\begin{split}
&\ \ \ \ \int_0^T\int_{|x-y|\ge r}e^{-\delta |x-y|^2} |f(t+s,y)|  \d y\d s \\
&\le \sup_{z \in \Rd} \int_0^T\int_{|z-y|\le \frac{1}{N}} |f(t+s,y)|  \d y\d s \left(\sum_{-\infty<n<\infty}e^{-\frac{\delta n^2}{16N^2}}\right)^d,
\end{split}
\end{eqnarray}
where $N$ is some positive integer larger than $ \frac{3}{r}$. Therefore together with (\ref{2.3}) we get
\begin{eqnarray} \label{2.7}
\begin{split}
&\ \ \ \ \lim_{T \to 0} \sup_{t\ge 0,x\in \Rd}\int_0^T\int_{|x-y|\ge r}e^{-\delta |x-y|^2} |f(t+s,y)|  \d y\d s=0.
\end{split}
\end{eqnarray}
 Taking $0<T\le r^2<1$ and $\l>0$, then we have
\begin{eqnarray} \nonumber
\begin{split}
& \ \ \ \ \sup_{t\ge 0,x\in \Rd}\int_0^{ T} \int_{\Rd}s^{-\frac{d+\a}{2}}e^{-\frac {\l |x-y|^2}{2s}} |f(t+s,y)|\d y \d s \\
& \le \sup_{t\ge 0,x\in \Rd}\int_0^{r^2} \int_{|x-y|<\sqrt s}s^{-\frac{d+\a}{2}}e^{-\frac {\l |x-y|^2}{2s}} |f(t+s,y)|\d y \d s \\
&\ \ \ \ + \sup_{t\ge 0,x\in \Rd}\int_0^{r^2}  \int_{ \sqrt s \le |x-y|<r }s^{-\frac{d+\a}{2}}e^{-\frac {\l |x-y|^2}{2s}} |f(t+s,y)|\d y \d s\\
% &\ \ \ \ +\sup_{t\ge 0,x\in \Rd}\int_{r^2}^{1} \int_{ |x-y|<r }s^{-\frac{d+\a}{2}}e^{-\frac {\l |x-y|^2}{2s}} |f(t+s,y)|\d y \d s\\
&\ \ \ \ +\sup_{t\ge 0,x\in \Rd}\int_0^{ T} \int_{|x-y|\ge r}s^{-\frac{d+\a}{2}}e^{-\frac{\lambda r^2}{4s}} e^{-\frac {\l |x-y|^2}{4}} |f(t+s,y)|\d y \d s \\
& \le \sup_{t\ge 0,x\in \Rd}\int_0^{r^2} \int_{|x-y|<\sqrt s}s^{-\frac{d+\a}{2}} |f(t+s,y)|\d y \d s \\
&\ \ \ \ +c \sup_{t\ge 0,x\in \Rd}\int_0^{r^2}  \int_{ \sqrt s \le |x-y|<r }|x-y|^{-d-\a}  |f(t+s,y)|\d y \d s\\
% &\ \ \ \ +\sup_{t\ge 0,x\in \Rd}\int_{r^2}^{ 1} \int_{ |x-y|<r }s^{-\frac{d+\a}{2}}  |f(t+s,y)|\d y \d s\\
&\ \ \ \ +c r^{-d-\a}\sup_{t\ge 0,x\in \Rd}\int_0^{ T} \int_{|x-y|\ge r}  e^{-\frac {\l |x-y|^2}{4}} |f(t+s,y)|\d y \d s.
\end{split}
\end{eqnarray}
First letting $T \to 0$ and then $r \to 0$, by (\ref{2.4})-(\ref{2.7}), it follows that
\begin{eqnarray}  \label{2.8-1} \nonumber
\begin{split}
&\lim_{T \to 0}\sup_{t\ge 0,x\in \Rd}\int_0^{ T} \int_{\Rd}s^{-\frac{d+\a}{2}}e^{-\frac {\l |x-y|^2}{2s}} |f(t+s,y)|\d y \d s =0,
\end{split}
\end{eqnarray}
By a similar argument we also have
\begin{eqnarray}   \label{2.9-1} \nonumber
\begin{split}
&\lim_{T \to 0}\sup_{t\ge 0,x\in \Rd}\int_0^{ T} \int_{\Rd}s^{-\frac{d+\a}{2}}e^{-\frac {\l |x-y|^2}{2s}} |f(t-s,y)|\d y \d s =0,
\end{split}
\end{eqnarray}
We thus have proved that  $f \in \K_{d,2-\a}$.
\end{proof}

\vskip 0.5cm

Combining with Proposition \ref{P2.3}, we obtain the following result.

\begin{cor} \label{C2.3}
A measurable function $f$ satisfies Hypothesis  $\mathrm{H_p}$ if and only if $f \in \K_{d, \a}$ for some $\a \in (0, 2)$.
\end{cor}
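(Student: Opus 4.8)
The statement should follow immediately by combining the two halves of Proposition \ref{P2.3} with Lemma \ref{L2.2}; the only real work is bookkeeping of the three index conventions --- the exponent $s^{-(d+2-\a)/2}$ in the definition of $\K_{d,\a}$, the exponent $s^{-(d+\a)/2}$ in Lemma \ref{L2.2}, and the power $r^p$ in Hypothesis $\mathrm{H_p}$ --- and checking that their admissible ranges overlap. For the implication ``$\mathrm{H_p}\Rightarrow$ Kato'' I would assume $f$ satisfies $\mathrm{H_p}$ with some $p>d$ and set $\a_0:=\tfrac12\min\{p-d,2\}$, so that $\a_0\in(0,2)$ and $\a_0\in[0,p-d)$; then Proposition \ref{P2.3}(ii) gives $f\in\K_{d,2-\a_0}$ with $2-\a_0\in(0,2)$, which is exactly the desired membership $f\in\K_{d,\a}$ with $\a=2-\a_0$.

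For the converse I would assume $f\in\K_{d,\a}$ for some $\a\in(0,2)$ and apply Lemma \ref{L2.2} with the lemma's index taken to be $2-\a\in[0,2)$ --- for which the hypothesis of the lemma reduces precisely to our assumption $f\in\K_{d,\a}$ --- so that, choosing $\lambda=1$ and $T=1$ in its conclusion,
\[
\sup_{t\ge 0,\,x\in\Rd}\int_0^{1}\int_{\Rd}s^{-\frac{p}{2}}e^{-\frac{|x-y|^2}{2s}}|f(t+s,y)|\,\d y\,\d s<\infty,\qquad p:=d+2-\a .
\]
Since $\a\in(0,2)$ we have $p\in(d,d+2)$, in particular $p>d$, so condition (\ref{2.3-1}) holds and Proposition \ref{P2.3}(i) gives that $f$ satisfies $\mathrm{H_p}$. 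Only the forward-in-time limit of Definition \ref{D1.1} is needed for this; the backward-in-time limit required to conclude $f\in\K_{d,2-\a_0}$ in the first implication would be recovered, as in the proof of Proposition \ref{P2.3}(ii), from the time-shift invariance of $\mathrm{H_p}$ after extending $f$ by zero on negative times.

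\textbf{Main obstacle.} I expect no difficulty of substance: all the analytic estimates have already been established in Proposition \ref{P2.3} (and in the Lemmas \ref{L2.3}, \ref{L2.2-1}, \ref{L2.2} it rests on). The only point demanding care is the index bookkeeping above --- confirming that $p=d+2-\a>d$ always holds, so that Proposition \ref{P2.3}(i) is applicable, and dually that for every $p>d$ the interval $[0,p-d)$ of admissible indices in Proposition \ref{P2.3}(ii) contains some $\a_0$ with $2-\a_0\in(0,2)$. Both are clear because $p>d$ forces $p-d>0$.
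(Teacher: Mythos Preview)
Your proposal is correct and follows the same route the paper intends: the implication $\mathrm{H_p}\Rightarrow\K_{d,\a}$ is exactly Proposition~\ref{P2.3}(ii), while the converse combines Lemma~\ref{L2.2} (to upgrade the $T\to 0$ limit in Definition~\ref{D1.1} to finiteness of the integral on $[0,1]$, i.e.\ condition~(\ref{2.3-1})) with Proposition~\ref{P2.3}(i). Your index bookkeeping is right, and your remark that only the forward-in-time part of the Kato condition is needed for the second implication is accurate.
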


\vskip 0.3cm

Next result shows that if $|f|^2$ belongs to some Kato class, $|f|$ is also in some Kato class with a different index.

\begin{lem}  \label{L2.4}
Given a measurable function $f$. If $|f|^2 \in \K_{d,\a}$ for some $\a \in (0,2)$, then $f \in \K_{d,1-\b}$ for any $\b \in [0,\frac{2-\a}{2})$.
%\begin{eqnarray} \label{2.25}
%\begin{split}
%&  \lim_{T \downarrow 0}\sup_{t\ge 0, x\in \R^d}\int_0^{T} \int_{\Rd} s^{-\frac{d+1+\b}{2}}e^{-\frac{\lambda |x-y|^2}{s}}|f(t+s,y)|\d y \d s=0, \\
%&  \lim_{T \downarrow 0}\sup_{t\ge 0, x\in \R^d}\int_0^{T} \int_{\Rd} s^{-\frac{d+1+\b}{2}}e^{-\frac{\lambda |x-y|^2}{s}}|f(t-s,y)|\d y \d s=0.
%\end{split}
%\end{eqnarray}.
\end{lem}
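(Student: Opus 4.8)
The plan is to travel along the equivalences for the Kato class established earlier in this section: first upgrade the assumption $|f|^2\in\K_{d,\a}$ to a sharp parabolic local $L^2$-bound of Hypothesis-$\mathrm{H_p}$ type, then descend from $|f|^2$ to $|f|$ by the Cauchy--Schwarz inequality, and finally read off the Kato-class membership of $|f|$ from the resulting bound via Proposition \ref{P2.3}(ii). The argument is soft; the only thing to watch is the bookkeeping of exponents, so that the output index comes out exactly as $1-\b$ for $\b<\frac{2-\a}{2}$.

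\medskip

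In more detail, since $\K_{d,\a}=\K_{d,2-(2-\a)}$ with $2-\a\in[0,2)$, Lemma \ref{L2.2} applied to $|f|^2$ (with $\l=T=1$) gives
\[
\sup_{t\ge 0,\,x\in\Rd}\int_0^1\!\int_{\Rd}s^{-\frac{d+2-\a}{2}}e^{-\frac{|x-y|^2}{2s}}|f(t+s,y)|^2\,\d y\,\d s<\infty ;
\]
with $p:=d+2-\a>d$, Proposition \ref{P2.3}(i) applied to $|f|^2$ then yields that $|f|^2$ satisfies $\mathrm{H_p}$, i.e. $\sup_{t\ge 0,x}\int_0^{r^2}\!\int_{B(x,r)}|f(t+s,y)|^2\,\d y\,\d s\le M_1 r^{p}$ for $0<r<1$. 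Next, Cauchy--Schwarz over the cylinder $(0,r^2)\times B(x,r)$, whose $\d y\,\d s$-measure is $c_d r^{d+2}$, gives
\[
\sup_{t\ge 0,\,x\in\Rd}\int_0^{r^2}\!\int_{B(x,r)}|f(t+s,y)|\,\d y\,\d s\le \big(M_1 r^{p}\big)^{1/2}\big(c_d r^{d+2}\big)^{1/2}=c\,r^{\,q},\qquad 0<r<1,
\]
where $q:=\tfrac{p+d+2}{2}=d+2-\tfrac{\a}{2}>d$, so $|f|$ satisfies $\mathrm{H_q}$. Finally, Proposition \ref{P2.3}(ii) applied to $|f|$ gives $|f|\in\K_{d,2-\gamma}$ for every $\gamma\in[0,q-d)=[0,2-\tfrac{\a}{2})$, and taking $\gamma:=1+\b$ for a given $\b\in[0,\tfrac{2-\a}{2})$ we have $1\le\gamma<1+\tfrac{2-\a}{2}=2-\tfrac{\a}{2}$, whence $f\in\K_{d,2-\gamma}=\K_{d,1-\b}$.

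\medskip

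I do not expect a genuine obstacle here; this is essentially an accounting lemma. The two points that must not be mishandled are: one must use the \emph{quantitative} implication $\K_{d,\a}\Rightarrow\mathrm{H_{d+2-\a}}$ (obtained from Lemma \ref{L2.2} together with Proposition \ref{P2.3}(i)) rather than the merely qualitative equivalence recorded in Corollary \ref{C2.3}, since a non-sharp value of $p$ would fall short of the threshold; and the Cauchy--Schwarz step must lose exactly the factor $r^{(d+2)/2}$ coming from $|B(x,r)|\cdot r^{2}$, no more. With these two observations in place the admissible ranges of the exponents close up precisely at $\b<\frac{2-\a}{2}$.
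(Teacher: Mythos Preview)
Your argument is correct, but it takes a different route from the paper's proof. The paper applies Cauchy--Schwarz directly to the defining Kato integral: for $\b\in[0,\frac{2-\a}{2})$ one writes
\[
\int_0^{T}\!\int_{\Rd}s^{-\frac{d+1+\b}{2}}e^{-\frac{|x-y|^2}{2s}}|f(t\pm s,y)|\,\d y\,\d s
\le\Big(\int_0^{T}\!\int_{\Rd}s^{-\frac{d+2-\a}{2}}e^{-\frac{|x-y|^2}{2s}}|f|^2\Big)^{1/2}
\Big(\int_0^{T}\!\int_{\Rd}s^{-\frac{d+2\b+\a}{2}}e^{-\frac{|x-y|^2}{2s}}\Big)^{1/2},
\]
the second factor being finite precisely when $2\b+\a<2$, and the first tending to zero with $T$ by the assumption $|f|^2\in\K_{d,\a}$. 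This handles both the forward and backward conditions in one stroke and uses nothing beyond the definition. Your proof instead detours through the equivalence with Hypothesis~$\mathrm{H_p}$: Lemma~\ref{L2.2} plus Proposition~\ref{P2.3}(i) to get $\mathrm{H_{d+2-\a}}$ for $|f|^2$, Cauchy--Schwarz on parabolic cylinders to pass to $\mathrm{H_{d+2-\a/2}}$ for $|f|$, and then Proposition~\ref{P2.3}(ii) to return to the Kato class. The exponents line up exactly, and you were right to insist on the sharp value $p=d+2-\a$ rather than the qualitative statement of Corollary~\ref{C2.3}. The cost of your approach is that it leans on the (nontrivial) Proposition~\ref{P2.3} in both directions, whereas the paper's one-line splitting is self-contained; the benefit is that it illustrates how the $\mathrm{H_p}$ scale behaves under the passage $|f|^2\mapsto|f|$, which is a useful observation in its own right.
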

\begin{proof} Let $\b \in [0,\frac{2-\a}{2})$. By H\"{o}lder's inequality we have
\begin{eqnarray}  \nonumber
\begin{split}
& \ \ \ \  \int_0^{T} \int_{\Rd} s^{-\frac{d+1+\b}{2}}e^{-\frac{|x-y|^2}{2s}}|f(t \pm s,y)|\d y \d s \\
& \le ( \int_0^{T} \int_{\Rd} s^{-\frac{d+2-\a}{2}}e^{-\frac{|x-y|^2}{2s}}|f(t \pm s,y)|^2\d y \d s)^{\frac{1}{2}}( \int_0^{T} \int_{\Rd} s^{-\frac{d+2\b+\a}{2}}e^{-\frac{|x-y|^2}{2s}}\d y \d s)^{\frac{1}{2}} \\
& \le c( \int_0^{T} \int_{\Rd} s^{-\frac{d+2-\a}{2}}e^{-\frac{|x-y|^2}{2s}}|f(t \pm s,y)|^2\d y \d s)^{\frac{1}{2}}.
\end{split}
\end{eqnarray}
This yields that $f \in \K_{d,1-\b}$ according to the condition on $|f|^2$.
\end{proof}

\vspace{4mm}

Next we will prove  some properties of the local Hardy-Littlewood maximal functions which will be used later.
For a measurable function $f:(0,\infty)\times \Rd \to \R^n$ and $R>0$, let $M_R f(t,x)$ denote the local Hardy-Littlewood maximal function of $f$ given by
\begin{equation} \nonumber
M_R f(t,x):=\sup_{0<\delta \le R}\frac{1}{m(B(x,\delta))}  \int_{B(x,\delta)}  |f( t,y)|\d y,
\end{equation}
where $m(B(x,\delta))$ stands for the volume of the ball $B(x,\delta)$. $M_{\infty}f$ is the maximal function of $f$ defined in \cite{Stein}. Here is the result.

\begin{lem}  \label{L2.5}
Assume $| f |^2 $ satisfies the Hypothesis  $\mathrm{H_p}$. Then for any $R>0$, $|M_R f|^2$ satisfies the Hypothesis  $\mathrm{H_q}$ for any $q \in (d,p)$.
\end{lem}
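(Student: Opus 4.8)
The plan is to bound $M_R f$ at a point $y\in B(x,r)$ by splitting the defining supremum at the scale $\delta=r$: the small scales $\delta\le r$ are absorbed by the $L^2$-boundedness of the Hardy--Littlewood maximal operator, and the large scales $r<\delta\le R$ are controlled by Lemma~\ref{L2.3} used only in its favourable regime. Concretely, fix $t\ge 0$, $x\in\Rd$, $0<r<1$; for $y\in B(x,r)$ and $0<\delta\le r$ we have $B(y,\delta)\subseteq B(x,2r)$, so that part of the supremum is $\le M_{\infty}\!\bigl(f(t+s,\cdot)I_{B(x,2r)}\bigr)(y)$, while for $r<\delta\le R$ we have $B(y,\delta)\subseteq B(x,2\delta)$ with $m(B(x,2\delta))=2^d m(B(y,\delta))$, so that part is $\le 2^d\psi(t+s)$, where $\psi(\tau):=\sup_{2r<\rho\le 2R}\frac{1}{m(B(x,\rho))}\int_{B(x,\rho)}|f(\tau,z)|\,\d z$ (understood as $0$ when $r\ge R$), and by Cauchy--Schwarz $\psi(\tau)^2\le\sup_{2r<\rho\le 2R}\frac{1}{m(B(x,\rho))}\int_{B(x,\rho)}|f(\tau,z)|^2\,\d z$. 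Thus $|M_R f(t+s,y)|^2\le 2\,\bigl|M_{\infty}(f(t+s,\cdot)I_{B(x,2r)})(y)\bigr|^2+2^{2d+1}\,\psi(t+s)^2$.

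\noindent For the small-scale term I would integrate over $y\in\Rd$, apply the strong $(2,2)$ inequality for $M_{\infty}$ (see \cite{Stein}), and then invoke Lemma~\ref{L2.3} for $|f|^2$ with radius $2r<2$ and horizon $r^2\le(2r)^2$, obtaining
\[
\int_0^{r^2}\!\!\int_{B(x,r)}\bigl|M_{\infty}(f(t+s,\cdot)I_{B(x,2r)})(y)\bigr|^2\,\d y\,\d s\ \le\ c\int_0^{r^2}\!\!\int_{B(x,2r)}|f(t+s,z)|^2\,\d z\,\d s\ \le\ c\,r^{p}.
\]
For the large-scale term, integration in $y$ over $B(x,r)$ only produces a factor $m(B(x,r))=c_d r^d$, so it remains to bound $\int_0^{r^2}\psi(t+s)^2\,\d s$. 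Decompose the supremum over $\rho\in(2r,2R]$ into dyadic shells: there are at most $c\bigl(1+\log_2(R/r)\bigr)$ of them, and for each dyadic radius $\rho_k$ the essential point is $\rho_k>2r>r=\sqrt{r^2}$, so Lemma~\ref{L2.3} applies in its \emph{first} form ($r^2\le\rho_k^2$) and yields the scale-independent estimate $\int_0^{r^2}\frac{1}{m(B(x,\rho_k))}\int_{B(x,\rho_k)}|f(t+s,z)|^2\,\d z\,\d s\le c\,(r^2)^{(p-d)/2}=c\,r^{p-d}$ when $\rho_k<2$; the boundedly many dyadic radii $\ge 2$ are treated by covering $B(x,\rho_k)$ with $O(\rho_k^d)$ unit balls and using Lemma~\ref{L2.3} at radius $1$. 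Summing over the shells gives $\int_0^{r^2}\psi(t+s)^2\,\d s\le c\bigl(1+\log_2(R/r)\bigr)r^{p-d}$, so the large-scale contribution is $\le c\bigl(1+\log_2(R/r)\bigr)r^{p}$. Altogether, for $0<r<1$,
\[
\sup_{t\ge 0,\,x\in\Rd}\int_0^{r^2}\!\!\int_{B(x,r)}|M_R f(t+s,y)|^2\,\d y\,\d s\ \le\ c\bigl(1+\log_2(R/r)\bigr)r^{p}\ \le\ c_{R,p,q}\,r^{q},
\]
because $q<p$ makes $\bigl(1+\log_2(R/r)\bigr)r^{p-q}$ bounded on $(0,1)$; this is precisely Hypothesis $\mathrm{H_q}$ for $|M_R f|^2$.

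\noindent The step I expect to be the main obstacle is the large-scale estimate. A naive ``$\sup\le\sum$'' over \emph{all} scales down to $0$ would force the use of the second inequality in Lemma~\ref{L2.3} (valid for $T>\rho^2$), whose factor $\rho^{p-d-2}$ is \emph{not} summable over small dyadic radii when $d<p\le d+2$; the whole purpose of cutting at scale $r$ is that the large-scale piece then never sees a radius below $2r>\sqrt{r^2}$, keeping us in the regime $T\le\rho^2$ where Lemma~\ref{L2.3} is uniform in the radius, and the resulting logarithmic loss is swallowed because $q$ is strictly below $p$. A minor nuisance is that Lemma~\ref{L2.3} is only stated for radii $<2$, which necessitates the elementary covering-by-unit-balls argument for the finitely many large dyadic radii (a detail that also explains why the constant in $\mathrm{H_q}$ is allowed to depend on $R$).
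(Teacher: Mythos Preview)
Your proof is correct, and it takes a genuinely different route from the paper's. Both arguments split the contribution of $M_R f$ into a ``near'' piece handled by the strong $(2,2)$ bound for the Hardy--Littlewood maximal operator and a ``far'' piece handled through Lemma~\ref{L2.3}, but the way the splitting is implemented differs. The paper uses a level-set/layer-cake decomposition: it writes $\int|M_Rf|^2=2\int_0^\infty\alpha\,|\{M_Rf>\alpha\}|\,\d\alpha$, splits $f$ into $f_1=fI_{\{|f|\ge\alpha/2\}}I_{B(x_0,2r)}$ and $f_2=fI_{\{|f|\ge\alpha/2\}}I_{B(x_0,2r)^c}$, and for the far term exploits $|f_2|\le 2\alpha^{-1}|f|^2$ to reduce to an annular integral $\int_{B(x_0,r+R)\setminus B(x_0,r)}|z-x_0|^{-d}|f|^2$; a dyadic decomposition of this annulus combined with Lemma~\ref{L2.3} gives $\mathrm{I}_2\le c\,r^{p-\beta}$ for any $\beta\in(0,p-d)$. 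Your approach instead splits the defining supremum of $M_R$ at the scale $\delta=r$, which is more direct and bypasses the distribution-function machinery; the resulting large-scale term carries only a logarithmic loss, $c(1+\log_2(R/r))\,r^{p}$, rather than the paper's polynomial saving. Since the target is $\mathrm H_q$ with $q$ strictly below $p$, both estimates suffice. Your argument is shorter and more elementary; the paper's yields a slightly sharper bound that, however, is not needed for the statement as formulated.
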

\begin{proof}
Fix $(t,x_0) \in [0,\infty) \times \Rd$, $R>0$ and $0<r<1$. Set $f_1(s,x):=f(s,x)  I_{|f|\ge \frac{\a}{2}}  I_{|x-x_0|< 2r}$ and $f_2(s,x):=f(s,x)  I_{|f|\ge \frac{\a}{2}}  I_{|x-x_0|\ge 2r}$. Then
\begin{eqnarray}  \nonumber
\begin{split}
\{M_Rf > \a\} \subset \{ M_Rf_1 > \frac{\a}{4} \} \cup \{ M_Rf_2 > \frac{\a}{4} \}.
\end{split}
\end{eqnarray}
Therefore,
\begin{eqnarray}  \label{2.40}
\begin{split}
& \ \ \ \ \int_{0}^{ r^2} \int_{B(x_0,r) }   M_R f(t+s,y) ^2 \d y \d s \\
&= 2\int_{0}^{ r^2} \int_{B(x_0,r) }  \int_0^{ M_R f(t+s,y) } \a \d \a \d y \d s \\
&= 2 \int_{0}^{ r^2} \d s \int_0^{\infty} \a \d \a \int_{B(x_0,r) \cap \{  M_R f(t+s,y) > \a \}}   \d y  \\
&\le  2\int_{0}^{ r^2} \d s \int_0^{\infty} \a \d \a \int_{   M_Rf_1 > \frac{\a}{4}  }   \d y + 2 \int_{0}^{ r^2} \d s \int_0^{\infty} \a \d \a \int_{B(x_0,r) \cap \{ M_Rf_2 > \frac{\a}{4}  \}}   \d y \\
&\le 2\int_{0}^{ r^2} \d s \int_{ \Rd}   \d y \int_0^{4M_Rf_1}\a \d \a +  2 \int_{0}^{ r^2} \d s \int_0^{\infty} \a \d \a \int_{B(x_0,r) \cap \{ M_Rf_2 > \frac{\a}{4}  \}}   \d y\\
&\le  16 \int_{0}^{ r^2} \d s \int_{\Rd} |M_Rf_1 |^2   \d y+ 2 \int_{0}^{ r^2} \d s \int_0^{\infty} \a \d \a \int_{B(x_0,r) \cap \{ M_Rf_2 > \frac{\a}{4}  \}}   \d y:=\mathrm{I}_1+\mathrm{I}_2.
\end{split}
\end{eqnarray}

For the term $\mathrm{I}_1$, by \cite[Theorem 1 in section 1]{Stein}, we have
\begin{eqnarray}  \nonumber
\begin{split}
 \int_{\Rd} |M_Rf_1 |^2   \d y &\le \int_{\Rd} |M_{\infty}f_1 |^2   \d y\\
&\le c  \int_{\Rd} |f_1|^2   \d y \le c   \int_{B(x_0,2r)} |f|^2   \d y.
\end{split}
\end{eqnarray}
Hence by Lemma \ref{L2.3},
\begin{eqnarray}  \label{2.41}
\begin{split}
\mathrm{I}_1 & \le c \int_{0}^{ r^2} \d s   \int_{B(x_0,2r)} |f|^2   \d y \le c r^p.
\end{split}
\end{eqnarray}

Now we turn to the term $ \mathrm{I}_2$. For $y \in B(x_0,r)$, if $\delta< r$, by the definition of $f_2$, we see that $\int_{B(y,\delta)}  |f_2(s,z)|\d z =0$. If $r \le \delta \le R$, then $B(y,\delta) \subset B(x_0,r+\delta)$. Thus, for $y \in B(x_0,r)$,
\begin{eqnarray}  \nonumber
\begin{split}
M_R f_2(t+s, y)&= \sup_{r \le \delta \le R} \frac{1}{m(B(y,\delta))}\int_{B(y,\delta)}   | f_2(t+s,z)| \d z  \\
&\le \sup_{r \le \delta \le R} \frac{2\a^{-1}}{m(B(y,\delta))}  \int_{B(x_0,r+\delta) \setminus B(x_0, r)}  | f(t+s,z)|^2 \d z  \\
&\le   \sup_{r \le \delta \le R} \frac{2\a^{-1}}{m(B(y,\delta))}  (r+\delta)^d \int_{B(x_0,r+\delta) \setminus B(x_0, r)} |z-x_0|^{-d} | f (t+s,z)|^2 \d z \\
& \le c \a^{-1}  \sup_{r \le \delta \le R}  (\frac{r}{\delta}+1)^d \int_{B(x_0,r+R) \setminus B(x_0, r)} |z-x_0|^{-d} | f (t+s,z)|^2 \d z\\
&\le  c \a^{-1}  \int_{B(x_0,r+R) \setminus B(x_0, r)} |z-x_0|^{-d} | f (t+s,z)|^2 \d z := c \a^{-1}  m_{t+s}.
\end{split}
\end{eqnarray}
It follows that if $\a > 2\sqrt {cm_{t+s}}$, then $B(x_0,r) \cap \{ M_Rf_2 > \frac{\a}{4}  \}=\emptyset$. Hence we have
\begin{eqnarray}  \label{2.42}
\begin{split}
 \mathrm{I}_2 & =  \int_{0}^{ r^2} \d s \int_0^{\infty} \a \d \a \int_{B(x_0,r) \cap \{ M_Rf_2 > \frac{\a}{4}  \}}   \d y \\
&\le  \int_{0}^{ r^2} \d s \int_0^{2 \sqrt {cm_{t+s}}} \a \d \a \int_{B(x_0,r) }   \d y \\
&\le c r^d \int_{0}^{ r^2}   m_{t+s}  \d s.
\end{split}
\end{eqnarray}

On the other hand,  for any $0<\b < p-d$, by Lemma \ref{L2.3} we have
\begin{eqnarray}  \label{2.42-1}
\begin{split}
& \ \ \ \ \int_0^{r^2} m_{t+s} \d s=\int_{0}^{ r^2} \int_{B(x_0,r+R) \setminus B(x_0, r)}|x-y|^{-d} |f(t+s,y)|^2 \d y \d s \\
&\le r^{-\b} \sum_{n \ge 0}  \int_{ 0}^{  r^2}   \int_{(B(x_0,2^{-n}(r+R)) \setminus B(x_0, 2^{-(n+1)}(r+R))) \setminus B(x_0, r)}   |x-y|^{ \b-d } |f(t+s,y)|^2\d y \d s \\
&\le r^{-\b}  \sum_{n \ge 0} 2^{(n+1)(d-\b)} (r+R)^{\b-d} \int_{ 0}^{  r^2}   \int_{ B(x_0,2^{-n}(r+R))  \setminus B(x_0, r)}    |f(t+s,y)|^2 \d y \d s \\
&\le c r^{-\b}  \sum_{n \ge 0} 2^{(n+1)(d-\b)} (r+R)^{\b-d}  r^{p-d}   2^{-nd}(r+R)^d\le c (r+R)^{\b }  r^{p-d-\b}.
\end{split}
\end{eqnarray}
Putting together (\ref{2.42}) and (\ref{2.42-1}) we obtain
\begin{equation}  \label{2.42-2}
 \mathrm{I}_2 \le c  r^{p-\b}.
\end{equation}
Combining (\ref{2.40}), (\ref{2.41}) and (\ref{2.42-2}), we find that $|M_Rf|^2$ satisfies the Hypothesis  $\mathrm{H_q}$ for any $q \in (d,p)$.
\end{proof}

\vspace{4mm}

Following the same argument as in the proof of \cite[Lemma 3.5]{ZhangXC2}, we have the following Lemma:

\begin{lem}  \label{L2.6}
Let $f \in W^{0,1}_{1,loc}((0,T)\times \Rd)$. Then there exists a $\d t \d x$-null set $A \subset (0,T)\times \Rd$ such that for any $R>0$ and $(t,x),(t,y) \in (0,T)\times \Rd \setminus A$ with $|x-y|\le R$,
\begin{eqnarray}  \nonumber
\begin{split}
|f(t,x)-f(t,y)| \le 2^d |x-y|(M_R|\nabla_x f|(t,x)+M_R|\nabla_x f|(t,y)).
\end{split}
\end{eqnarray}
\end{lem}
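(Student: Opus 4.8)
The plan is to treat the time variable $t$ as a passive parameter, since $W^{0,1}_{1,loc}$ imposes only one weak spatial derivative and no time regularity, and to invoke the classical pointwise Sobolev estimate on each time slice. By Fubini's theorem there is a Lebesgue-null set $N\subset(0,T)$ such that for every $t\notin N$ the slice $f(t,\cdot)$ lies in $W^{1,1}_{loc}(\Rd)$ and its weak gradient coincides with the $t$-slice of $\nabla_x f$. Thus the task reduces to proving, for a fixed $g\in W^{1,1}_{loc}(\Rd)$, the spatial inequality away from a canonically defined null set $A_g\subset\Rd$, and then setting $A:=(N\times\Rd)\cup\{(t,x):t\notin N,\ x\in A_{f(t,\cdot)}\}$. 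The choice of $A_g$ matters here: it should be taken to be the complement of the set of common Lebesgue points of $g$ and $|\nabla g|$, which is described through countable suprema of averages and is therefore jointly measurable in $(t,x)$, so that $A$ is measurable and, by Fubini, $\d t\,\d x$-null.

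For a fixed $g$, the heart of the matter is the oscillation bound
\[
\Bigl|\,g(x)-\tfrac{1}{m(B(x,\rho))}\int_{B(x,\rho)}g(z)\,\d z\,\Bigr|\le c_d\,\rho\,M_\rho|\nabla g|(x),\qquad 0<\rho\le R,
\]
valid at every Lebesgue point $x$. First I would establish it for $g\in C^\infty$ by writing $g(x)-g(z)=-\int_0^1\nabla g\bigl(x+s(z-x)\bigr)\cdot(z-x)\,\d s$, averaging over $z\in B(x,\rho)$, performing the change of variables $w=x+s(z-x)$ and Fubini to reach the Riesz-potential bound $c_d\int_{B(x,\rho)}|w-x|^{1-d}|\nabla g(w)|\,\d w$, and then splitting $B(x,\rho)$ into the dyadic annuli $\{2^{-j-1}\rho\le|w-x|<2^{-j}\rho\}$, dominating the integral over the $j$-th annulus by $(2^{-j-1}\rho)^{1-d}\int_{B(x,2^{-j}\rho)}|\nabla g|\le c\,2^{-j}\rho\,m(B(x,\rho))\,M_\rho|\nabla g|(x)$ and summing the geometric series. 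The general $g\in W^{1,1}_{loc}$ case then follows because this representation persists for a.e.\ $x$; alternatively one mollifies, uses $|\nabla(g\ast\varrho_\e)|\le|\nabla g|\ast\varrho_\e$ together with $M_\rho(h\ast\varrho_\e)\le(M_\rho h)\ast\varrho_\e$, and passes to the limit along a subsequence at common Lebesgue points.

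Given $x,y$ with $\rho:=|x-y|\le R$, I would then combine the oscillation bound at $x$ over $B(x,\rho)$ with the one at $y$ over $B(y,\rho)$, writing $g(x)-g(y)$ as the sum of $g(x)-g_{B(x,\rho)}$, $g_{B(x,\rho)}-g_{B(y,\rho)}$ and $g_{B(y,\rho)}-g(y)$, where $g_B$ denotes the average of $g$ over $B$. The outer terms are handled directly; the middle one is controlled through the overlap $D:=B(x,\rho)\cap B(y,\rho)$, since $m(D)$ is a fixed fraction of $m(B(x,\rho))=m(B(y,\rho))$ and $\tfrac{1}{m(B(x,\rho))}\int_{B(x,\rho)}|g-g_{B(x,\rho)}|\le c_d\rho\,M_\rho|\nabla g|(x)$ by the same computation, so that $|g_{B(x,\rho)}-g_D|$ and $|g_D-g_{B(y,\rho)}|$ are each bounded by $c_d\rho\bigl(M_\rho|\nabla g|(x)+M_\rho|\nabla g|(y)\bigr)$. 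Collecting terms, using $M_\rho\le M_R$, and bookkeeping the dimensional constants — which, arranged carefully as in \cite[Lemma 3.5]{ZhangXC2}, can be made to produce exactly the factor $2^d$ — and finally reintroducing $t$ and taking the union of the slice-wise null sets via Fubini, yields the claimed inequality on $\bigl((0,T)\times\Rd\bigr)\setminus A$ with a single $\d t\,\d x$-null set $A$ valid for all $R>0$.

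I expect the substantive work to lie not in the one-slice analysis — which is the classical pointwise inequality controlling the oscillation of a Sobolev function by the maximal function of its gradient — but in two points of care. First, keeping the multiplicative constant equal to $2^d$, uniformly in $R$ and over all admissible pairs, which pins down exactly which intermediate balls and averages one is allowed to insert in the chaining step. Second, arranging that a single $\d t\,\d x$-null set $A$ serves simultaneously for every $R>0$ and every pair $(t,x),(t,y)$: this requires the slice-wise exceptional sets to be genuine, measurably defined sets of non-Lebesgue points, so that Fubini produces a bona fide null and measurable $A$, rather than the weaker ``for each $R$, for a.e.\ pair'' statement that a naive approximation argument would give.
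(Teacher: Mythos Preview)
Your proposal is correct and, in fact, supplies considerably more detail than the paper itself: the paper does not prove Lemma~\ref{L2.6} at all but simply defers to \cite[Lemma~3.5]{ZhangXC2}, the very reference you invoke for the constant $2^d$. Your Fubini-plus-slicewise argument, the Riesz-potential oscillation bound via dyadic annuli, and the chaining through intermediate ball averages constitute exactly the standard proof of that reference, and your attention to the joint measurability of the exceptional set (so that a single $\d t\,\d x$-null $A$ works for all $R$) is a point of rigor the paper does not even mention.
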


\section{Kolmogorov equations associated with SDEs}

In this section, we study the Kolmogorov equations associated with  SDEs (\ref{1.1}). We will provide the regularity results for the solutions which will be used in next section. Given a vector valued function $f:(0,T)\times \Rd \to \Rd$, consider the following backward second order parabolic equation:
\begin{align} \label{2.13}
\left\{
\begin{aligned}
&\partial_t u (t,x)+\frac{1}{2}\Delta u(t,x)+\< b(t,x),  \nabla_x u(t,x) \>   =f(t,x),&&\forall (t,x) \in (0,T)\times \Rd, \\
& u(T,x) = 0, && \forall x \in \Rd.
\end{aligned}
\right.
\end{align}

The following is the definition of the mild solution to  equation (\ref{2.13}):
\begin{dfn} \label{D2.1}
We say that $u \in C_b^{0,1}([0,T]) \times \Rd)$ is a mild solution to the equation (\ref{2.13}), if for each $(t,x) \in [0,T] \times \Rd$.
\begin{eqnarray} \nonumber
\begin{split}
u(t,x)=\int_t^T\int_{\Rd} q(s-t,x,y) [\< b(s,y),  \nabla_y u(s,y) \>-f(s,y)] \d y \d s,
\end{split}
\end{eqnarray}
where $ q(t,x,y)$ is the transition density function of the Brownian motion.
\end{dfn}

Fix a non-negative function $\varphi \in C_0^{\infty}((0,1) \times B(0,1))$ with $\int_{\R^{d+1}} \varphi(t,x)\d x \d t=1$. For any positive integer $n$, we put $\varphi_n(t, x):=2^{nd+n}\varphi(2^{n}t, 2^{n}x)$.

%Moreover, using the Ito's formula to $u_n (\cdot,X^{t,x}_n(\cdot))$, we have that
%\begin{eqnarray}
%\begin{split}
%u_n(t,x)=-E[\int_t^Tf_n(s,X^{t,x}_n(s) \d s]=-\int_t^T\int_{\Rd}p_n(t,x,s,y) f_n(s,y) \d y \d s, \ \forall (t,x) \in [0,T] \times \Rd.
%\end{split}
%\end{eqnarray}
%Recall that $X^{t,x}$ is the solution of the SDE (\ref{2.9}). Set
%\begin{eqnarray}  \label{2.10}
%\begin{split}
%u(t,x):=-E[\int_t^Tf (s,X^{t,x}(s) \d s], \ \forall (t,x) \in [0,T] \times \Rd.
%\end{split}
%\end{eqnarray}

%Then we have the following result.
%The proof of the following results is essential similar to \cite[Theorem 5.5]{ZhangXC}, the difference is that the coefficients in their result are time independent.

For $\lambda, T>0$ and $h \in \K_{d,1}$, define
\begin{eqnarray}   \nonumber
\begin{split}
\mathcal{N}^{\lambda}_h(T):=\sup_{t \ge 0, x\in \R^d}\int_t^{t+T} \int_{\Rd} (s-t)^{-\frac{d+1}{2}} e^{-\frac{\lambda |x-y|^2}{4(s-t)}} |h(s,y)|\d y \d s.
\end{split}
\end{eqnarray}
We need the following Lemma.

\begin{lem} \label{L3.2}
%and a function $\nabla_x  q(s-t,x,y)$ defined on $\{(t,x,s,y):0<t<s\le 2T, \ x,y \in \Rd\}$ satisfying that
%\begin{equation}
%|\nabla_x  q(s-t,x,y)| \leq c_1(s-t)^{- \frac{d+1}{2}}e^{-\frac{c_2|x-y|^2}{s-t}}.
%\end{equation}
Given $h\in \K_{d,1}$ and $T>0$. Let $\{f_n\}_{n\ge 1}$ be a sequence of Borel measurable functions satisfying $\sup_{n\ge 1} \|f_n\|_{ L^{\infty} }<\infty$ and
\begin{eqnarray}  \label{3.2-1}
 \lim_{n \to \infty} \sup_{s,  |z| \in [0,  2^{-n})}  \sup_{(t,x)\in [0, T-s] \times \Rd} |f_n(t+s,x+z)-f_n(t,x)|=0.
\end{eqnarray}
% If
%\begin{eqnarray}  \label{3.1-1}
%\begin{split}
% \lim_{\delta \to 0}\sup\limits_{ t >0,x \in \R^d}  \int_{t}^{t+\delta}\int_{B(x,1)}|h(s, y)| \d y \d s=0,
%\end{split}
%\end{eqnarray}
Then for any $\lambda, R>0$,
\begin{eqnarray}
&& \sup_{n \ge 1}\mathcal{N}^{\lambda}_{h_n} (T) \le \mathcal{N}^{\lambda}_h (T), \label{2.19}   \\
&& \lim_{n \to \infty} \sup_{0\le t  \le T, x\in B(0,R)} | \int_t^{T}\int_{\Rd}  \nabla_x  q(s-t,x,y) f_n(s,y) ( h_n (s,y)-h (s,y) ) \d y \d s|=0, \label{2.20}
\end{eqnarray}
where $h_n(t,x):=\int_{\R^{d+1}} \varphi_n(t-s,x-y)h(s, y)\d y \d s$.
\end{lem}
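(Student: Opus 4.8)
The estimate (\ref{2.19}) is the easy part: since $\varphi_n \geq 0$ and $\int \varphi_n = 1$, the mollified function $h_n(t,x)$ is an average of translates of $h$ over the region $(t-s,x-y)$ with $0 < s < 2^{-n}$ and $|y| < 2^{-n}$. Plugging this into the definition of $\mathcal{N}^\lambda_{h_n}(T)$, applying Fubini to pull out the $\varphi_n$-integral, and then shifting the time-space variables inside the Gaussian kernel $(s-t)^{-(d+1)/2}e^{-\lambda|x-y|^2/4(s-t)}$ (using that the supremum over $(t,x)$ is translation invariant, together with a comparison of $(s-t)^{-(d+1)/2}$ across a time-shift of size $2^{-n} \le s-t$ — or more simply, by restricting the mollification to a backward shift so the kernel only improves), one bounds $\mathcal{N}^\lambda_{h_n}(T)$ by $\mathcal{N}^\lambda_h(T)$ uniformly in $n$. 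I would be a little careful here: the cleanest route is to note $h_n(t,x) = \int \varphi_n(\sigma,z) h(t-\sigma, x-z)\,d\sigma\,dz$ and estimate $|h_n| \le \int \varphi_n(\sigma,z)|h(t-\sigma,x-z)|\,d\sigma\,dz$, then interchange orders and substitute; the Gaussian kernel with the shifted time argument is controlled by the original one up to the constant absorbed by choosing the mollifier support small relative to the integration domain.

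For (\ref{2.20}), the plan is to split the integrand $\nabla_x q(s-t,x,y) f_n(s,y)(h_n(s,y) - h(s,y))$ according to whether $s - t$ is small or not. Fix $\varepsilon > 0$ and a small $\delta > 0$. On the region $s - t > \delta$, the kernel $\nabla_x q(s-t,x,y)$ is bounded (it is smooth and decays, with $|\nabla_x q(\sigma,x,y)| \le c\sigma^{-(d+1)/2}e^{-c|x-y|^2/\sigma}$, uniformly bounded in $L^1_y$ by $c\,\delta^{-1/2}$ for $\sigma > \delta$), $\|f_n\|_\infty$ is uniformly bounded, and $h_n \to h$ in $L^1_{loc}$ of space-time (standard mollification), so this piece goes to $0$ as $n \to \infty$ for each fixed $\delta$. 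The main work is the region $t < s < t + \delta$: here I would use that $h \in \K_{d,1}$, so by Lemma \ref{L2.2} the quantity $\mathcal{N}^\lambda_h(\delta) \to 0$ as $\delta \to 0$, and by (\ref{2.19}) the same bound holds for $h_n$ uniformly in $n$; combined with $|\nabla_x q(s-t,x,y)| \le c(s-t)^{-1/2}(s-t)^{-d/2}e^{-c|x-y|^2/(s-t)} \le c(s-t)^{-(d+1)/2}e^{-\lambda|x-y|^2/4(s-t)}$ and $\sup_n\|f_n\|_\infty < \infty$, the small-time piece is bounded by $c(\mathcal{N}^\lambda_h(\delta) + \sup_n\mathcal{N}^\lambda_{h_n}(\delta)) \le c\,\mathcal{N}^\lambda_h(\delta)$, which is made $< \varepsilon$ by choosing $\delta$ small. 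Letting first $n \to \infty$ (killing the $s-t > \delta$ part) and then $\delta \to 0$ gives the claim.

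The main obstacle is making the small-time estimate genuinely uniform in $n$ rather than just pointwise: one cannot afford to let the bound depend on $\|h_n - h\|$ near $s = t$ because $h$ is only Kato class there, not bounded. This is exactly why the hypothesis (\ref{3.2-1}) on $f_n$ — equicontinuity at scale $2^{-n}$ — and the uniform bound (\ref{2.19}) on $\mathcal{N}^\lambda_{h_n}$ are needed: they let us bound the small-time contribution of $f_n h_n$ without any convergence of $h_n$, treating $f_n$ as essentially constant on the $2^{-n}$-scale where the mollification lives. I would also need the elementary fact that for fixed $\delta$, $h_n \to h$ in $L^1((0,T)\times B(0,R'))$ for every $R'$, which is standard since $h \in \K_{d,1} \subset L^1_{loc}$ by Lemma \ref{L2.2}; the $x \in B(0,R)$ restriction in (\ref{2.20}) is what lets us localize and use this. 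Finally, the uniform convergence in $(t,x)$ over $[0,T]\times B(0,R)$ (as opposed to a mere integral bound) comes from the continuity of the kernel in $(t,x)$ on the region $s - t > \delta$ together with dominated convergence, the small-time part already being uniformly small.
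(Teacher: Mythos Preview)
For (\ref{2.19}) your approach coincides with the paper's: Fubini on the mollifier, then the substitution $s \mapsto s-\tau$, $y \mapsto y-z$ turns the inner integral into exactly the $\mathcal{N}^\lambda_h(T)$ expression at the shifted basepoint $(t-\tau, x-z)$, and the supremum absorbs the shift. No comparison of $(s-t)^{-(d+1)/2}$ across a time-shift is needed.

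For (\ref{2.20}) your route is genuinely different from the paper's and, if carried out, simpler. The paper does not use $L^1_{\mathrm{loc}}$ convergence of $h_n$ at all; instead it expands $h_n$ through the mollifier, shifts variables, and is then forced to control (i) the kernel increment $\nabla_x q(s+\tau-t, x-z, y) - \nabla_x q(s-t,x,y)$ via the H\"older estimate (\ref{3.8-1}), (ii) the increment $f_n(s+\tau, y+z) - f_n(s,y)$, which is exactly where hypothesis (\ref{3.2-1}) enters, and (iii) several boundary-in-time pieces, one of which needs the auxiliary estimate (\ref{3.6-1}). Your short-time/long-time split bypasses all of this: the short-time contribution is bounded by $c\,\mathcal{N}^1_h(\delta)$ uniformly in $n$ via (\ref{2.19}), and the long-time contribution uses only the pointwise bound $|\nabla_x q(\sigma,x,y)| \le c\,\delta^{-(d+1)/2}$ together with $h_n \to h$ in $L^1((0,T)\times B(0,R'))$ for the near field plus a Gaussian tail estimate for $|y|>R'$. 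In particular your argument never invokes (\ref{3.2-1}), so the sentence asserting that (\ref{3.2-1}) is ``exactly why'' the small-time bound works is misplaced --- your own proof shows the lemma holds under the weaker hypothesis $\sup_n \|f_n\|_{L^\infty} < \infty$ alone. Two small points to clean up: for the long-time piece the relevant kernel bound is the $L^\infty_y$ bound $c\,\delta^{-(d+1)/2}$, not the $L^1_y$ bound you quote (an $L^1_y$ bound does not pair with $L^1_{\mathrm{loc}}$ convergence of $h_n-h$); and uniformity in $(t,x)$ for that piece follows directly from the estimate $c_\delta\,\|f_n\|_{L^\infty}\,\|h_n-h\|_{L^1((0,T)\times B(0,R'))}$ together with the tail bound, not from ``continuity of the kernel plus dominated convergence''.
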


\begin{proof} Using the Fubini's theorem, we have
\begin{eqnarray}  \label{3.4-1} \nonumber
\begin{split}
 &\ \ \ \ \int_t^{t+T} \int_{\Rd} (s-t)^{-\frac{d+1}{2}} e^{-\frac{\lambda |x-y|^2}{4(s-t)}} |h_n(s,y)|\d y \d s \\
&\le  \int_t^{t+T} \int_{\Rd} (s-t)^{-\frac{d+1}{2}} e^{-\frac{\lambda |x-y|^2}{4(s-t)}} \int_{\R^{d+1}} \varphi_n( \tau,z)|h(s-\tau, y-z)|\d z \d \tau \d y \d s \\
&= \int_{\R^{d+1}} \varphi_n( \tau,z) \d z \d \tau \int_t^{t+T} \int_{\Rd} (s-t)^{-\frac{d+1}{2}} e^{-\frac{\lambda |x-y|^2}{4(s-t)}}  |h(s-\tau, y-z)| \d y \d s \\
&= \int_{\R^{d+1}} \varphi_n( \tau,z) \d z \d \tau \int_{t-\tau}^{t-\tau+T} \int_{\Rd} (s+\tau-t)^{-\frac{d+1}{2}} e^{-\frac{\lambda |x-y-z|^2}{4(s+\tau-t)}} |h(s, y)| \d y \d s \\
&\le \mathcal{N}^{\lambda}_h(T),
\end{split}
\end{eqnarray}
which is (\ref{2.19}).

\vspace{3mm}

To show (\ref{2.20}), we first prove that for any $\delta \in (0,T )$,
\begin{eqnarray}  \label{3.6-1}
\begin{split}
\lim_{n \to \infty} \sup_{t \in [0,T-\delta], x\in B(0,R)} \int_{T-2^{-n}}^{T} \int_{\Rd}|\nabla_x q(s-t,x,y)| |h(s, y)| \d y \d s=0.
\end{split}
\end{eqnarray}
For $r>0$ and sufficiently large  integer $n$, we have
\begin{eqnarray}  \label{3.7-1}
\begin{split}
& \ \ \ \ \sup_{t \in [0,T-\delta], x\in B(0,R)} \int_{T-2^{-n}}^{T} \int_{\Rd}|\nabla_x q(s-t,x,y)| |h(s, y)| \d y \d s \\
&\le \sup_{t \in [0,T-\delta], x\in B(0,R)} \int_{T-2^{-n}}^{T} \int_{B(0,R+r)}|\nabla_x q(s-t,x,y)| |h(s, y)| \d y \d s \\
& \ \ \ \ + \sup_{t \in [0,T-\delta], x\in B(0,R)} \int_{T-2^{-n}}^{T} \int_{B(0,R+r)^c}|\nabla_x q(s-t,x,y)| |h(s, y)| \d y \d s \\
&\le c (\delta-2^{-n})^{-\frac{d+1}{2}}    \int_{T-2^{-n}}^{T} \int_{B(0,R+r)}  |h(s, y)| \d y \d s \\
& \ \ \ \ +c \sup_{t \in [0,T-\delta], x\in B(0,R)} \int_{T-2^{-n}}^{T} \int_{B(0,R+r)^c} (s-t)^{-\frac{d+1}{2}} e^{-\frac{|x-y|^2}{4(s-t)}} |h(s, y)| \d y \d s \\
&\le c (\delta-2^{-n})^{-\frac{d+1}{2}}   \int_{T-2^{-n}}^{T} \int_{B(0,R+r)}  |h(s, y)| \d y \d s \\
& \ \ \ \ +c e^{-\frac{r^2}{8T}}\sup_{t \in [0,T-\delta], x\in B(0,R)} \int_{T-2^{-n}}^{T} \int_{B(0,R+r)^c} (s-t)^{-\frac{d+1}{2}} e^{-\frac{|x-y|^2}{8(s-t)}} |h(s, y)| \d y \d s \\
&\le c (\delta-2^{-n})^{-\frac{d+1}{2}}   \int_{T-2^{-n}}^{T} \int_{B(0,R+r)}  |h(s, y)| \d y \d s \\
& \ \ \ \ +c e^{-\frac{r^2}{8T}}\sup_{t \in [0,T-\delta], x\in B(0,R)} \int_{t}^{T} \int_{\Rd} (s-t)^{-\frac{d+1}{2}} e^{-\frac{|x-y|^2}{8(s-t)}} |h(s, y)| \d y \d s.
\end{split}
\end{eqnarray}
 From the definition of the Kato class and Lemma 2.1, we see that $  |h(s, y)| $ is integrable on $(0, T) \times B(0,R+r)$ and
 $$\sup_{t \in [0,T-\delta], x\in B(0,R)} \int_{t}^{T} \int_{\Rd} (s-t)^{-\frac{d+1}{2}} e^{-\frac{|x-y|^2}{8(s-t)}} |h(s, y)| \d y \d s<\infty.$$
 Now first letting $n \to \infty$ and then letting $r \to \infty$ in (\ref{3.7-1}), we obtain (\ref{3.6-1}).

\vspace{3mm}

Now we show (\ref{2.20}).
Note that for $\alpha \in (0,1)$, there exists constants $C_1>0$ and $0<C_2\le 1$ such that for $0<t_1<t_2 \le T$,
\begin{eqnarray}   \label{3.8-1}
\begin{split}
 &\ \ \ \    |\nabla_x q(t_1,x_1,y)-\nabla_x q(t_2,x_2,y)|\\
&\le C_1 |x_1-x_2|^{\alpha}t_1^{-\frac{d+1+\alpha}{2}}(e^{-\frac{C_2|x_1-y|^2}{2t_1}}+e^{-\frac{C_2|x_2-y|^2}{2t_1}})+C_1|t_1-t_2|^{\alpha}t_1^{-\frac{d+1+2\alpha}{2}} e^{-\frac{C_2 |x_2-y|^2}{2t_2}}.
\end{split}
\end{eqnarray}
Take $\delta \in (0,T )$ and a large positive integer $N$ so that $2^{-N} <\delta$. For $t \in [0,T-\delta]$, since $\text{supp} \ \varphi_n \subset (0,2^{-n}) \times B(0,2^{-n})$,
we have for $n \ge N$,
\begin{eqnarray}  \label{2.21}
\begin{split}
&\ \ \ \  | \int_t^{T}\int_{\Rd}  \nabla_x  q(s-t,x,y) f_n(s,y)( h_n (s,y)-h (s,y) ) \d y \d s| \\
&\le \|f_n\|_{ L^{\infty} }  \int_t^{t+\delta}\int_{\Rd}  |\nabla_x  q(s-t,x,y)| ( |h_n (s,y)|+|h (s,y)| ) \d y \d s\\
&\ \ \ \  +\|f_n\|_{ L^{\infty} }  \!  \! \int_{\R^{d+1}}  \!  \varphi_n( \tau,z) \d z \d \tau \int_{t+\delta-\tau}^{T-\tau}  \! \int_{\Rd}  \!  |\nabla_xq(s-t+\tau,x-z,y) \! - \! \nabla_x q(s-t,x,y)| |h(s, y)| \d y \d s\\
&\ \ \ \  + \int_{\R^{d+1}} \varphi_n( \tau,z) \d z \d \tau \int_{t+\delta-\tau}^{T-\tau} \int_{\Rd} |\nabla_xq(s-t,x,y)| |f_n(s+\tau, y+z)-f_n(s,y) | |h(s, y)| \d y \d s\\
&\ \ \ \  + \|f_n\|_{ L^{\infty} }  \sup_{\tau \in (0,2^{-n})}\int_{t+\delta-\tau }^{t+\delta} \int_{\Rd} |\nabla_x q(s-t,x,y)| |h(s, y)| \d y \d s\\
&\ \ \ \  + \|f_n\|_{ L^{\infty} }  \sup_{\tau \in (0,2^{-n})} \int_{T-\tau}^{T} \int_{\Rd}|\nabla_x q(s-t,x,y)| |h(s, y)| \d y \d s\\
&\le c \mathcal{N}^{1}_h(\delta)+c\int_{\R^{d+1}} \varphi_n( \tau,z) \d z \d \tau \int_{t+\delta-\tau}^{T-\tau} \int_{\Rd} [|z|^{\alpha} (s-t)^{-\frac{d+1+\alpha}{2}}(e^{-\frac{C_2|x-z-y|^2}{2(s-t)}}+e^{-\frac{C_2|x-y|^2}{2(s-t)}})\\
&\ \ \ \  + \tau^{\alpha}(s-t)^{-\frac{d+1+2\alpha}{2}} e^{-\frac{C_2|x-z-y|^2}{2(s-t+\tau)}} ]|h(s, y)| \d y \d s \\
&\ \ \ \  + \int_{\R^{d+1}} \varphi_n( \tau,z) \d z \d \tau \int_{t+\delta-\tau}^{T-\tau} \int_{\Rd} |\nabla_xq(s-t,x,y)| |f_n(s+\tau, y+z)-f_n(s,y) | |h(s, y)| \d y \d s \\
&\ \ \ \  +  c \int_{t}^{t+\delta} \int_{\Rd} |\nabla_x q(s-t,x,y)| |h(s, y)| \d y \d s \\
&\ \ \ \  +  c \int_{T-2^{-n}}^{T} \int_{\Rd}|\nabla_x q(s-t,x,y)| |h(s, y)| \d y \d s \\
&\le c \mathcal{N}^{1}_h(\delta)+c2^{-n \a} [(\delta-2^{-n})^{-\frac{d+1+\alpha}{2}} +(\delta-2^{-n})^{-\frac{d+1+2\alpha}{2}})] \sup_{x \in \Rd} \int_{\delta -2^{-N}}^{T} \int_{\Rd}  e^{-\frac{C_2|x-y|^2}{2T}} |h(s, y)| \d y \d s \\
&\ \ \ \  +  c  \sup_{ \tau , |z| \in [0,2^{-n}) } \sup_{(s,y) \in [0, T-\tau] \times \Rd } |f_n(s+\tau, y+z)-f_n(s,y) | \\
&\ \ \ \   \times\sup_{t \in [0, T-\delta]}\int_{t}^{T} \int_{\Rd}  |\nabla_x q(s-t,x,y)| |h(s, y)|  \d y \d s \\
&\ \ \ \  +  \sup_{t \in [0,T-\delta]} \int_{T-2^{-n}}^{T} \int_{\Rd}|\nabla_x q(s-t,x,y)| |h(s, y)| \d y \d s.
\end{split}
\end{eqnarray}
% where the last inequality follows from Lemma \ref{L2.2-1}.
%the following fact (see \cite[Lemma 2.3]{Wang}):
%\begin{equation} \nonumber
%\sup_{x\in \R^d}\int_{\R^d}e^{-\delta |x-y|^2} |\nu| (dy)\leq \sup\limits_{z \in \R^d}|\nu|(B(z,1)) e^{\delta }(\sum_{m=-\infty}^{\infty} e^{-\frac{\delta m^2}{8}})^d.
%\end{equation}
%Since $h \in \K_{d,1}$ and
%\begin{eqnarray}  \nonumber
%\begin{split}
%&\ \ \ \  \lim_{n \to \infty}\sup\limits_{t>2^{-n}  ,z \in \R^d}  \int_{t-2^{-n}}^{t}\int_{B(z,1)}|h(s, y)| \d y \d s\\
%&= \lim_{n \to \infty}\sup\limits_{t >0  ,z \in \R^d}  \int_{t}^{t+2^{-n}}\int_{B(z,1)}|h(s, y)| \d y \d s\\
%&\le c \lim_{n \to \infty}\sup\limits_{t >0 ,z \in \R^d}  \int_{t}^{t+2^{-n}}\int_{B(z,1)} (s-t)^{-\frac{d+1}{2}}e^{-\frac{1}{2(s-t)}}|h(s, y)| \d y \d s\\
%&\le c \lim_{n \to \infty}\sup_{t >0 , z\in \R^d}\int_0^{2^{-n}} \int_{\Rd} s^{-\frac{d+1}{2}}e^{-\frac{|y-z|^2}{2s}}|h(t+s,y)|\d y \d s=0,
%\end{split}
%\end{eqnarray}
On the other hand, for $t \in [T-\delta,T]$,
\begin{eqnarray}  \label{3.11-1}
\begin{split}
&\ \ \ \  | \int_t^{T}\int_{\Rd}  \nabla_x  q(s-t,x,y) f_n(s,y)( h_n (s,y)-h (s,y) ) \d y \d s| \\
&\le \|f_n\|_{ L^{\infty} }  \int_0^{T-t}\int_{\Rd}  |\nabla_x  q(s,x,y)| ( |h_n (s+t,y)|+|h (s+t,y)| ) \d y \d s \le c \mathcal{N}^{1}_h(\delta).
\end{split}
\end{eqnarray}
(\ref{2.21}) and (\ref{3.11-1}) together  implies that for all $t \in [0,T]$ and $n \ge N$,
\begin{eqnarray}  \label{3.12-1}
\begin{split}
&\ \ \ \  | \int_t^{T}\int_{\Rd}  \nabla_x  q(s-t,x,y) f_n(s,y)( h_n (s,y)-h (s,y) ) \d y \d s | \\
&\le c \mathcal{N}^{1}_h(\delta)+c2^{-n \a} [(\delta \!   - \! 2^{-n})^{-\frac{d \! + \! 1 \! + \! \alpha}{2}} \! \! +    \! (\delta \! - \! 2^{-n})^{-\frac{d \! + \! 1 \! + \! 2\alpha}{2}})] \sup_{x \in \Rd} \int_{\delta -2^{-N}}^{T} \! \! \int_{\Rd}  e^{-\frac{C_2|x-y|^2}{2T}} |h(s, y)| \d y \d s \\
&\ \ \ \  +  c  \sup_{ \tau , |z| \in [0,2^{-n}) } \sup_{(s,y) \in [0, T-\tau] \times \Rd } |f_n(s+\tau, y+z)-f_n(s,y) | \\
&\ \ \ \   \times\sup_{t \in [0, T-\delta]}\int_{t}^{T} \int_{\Rd}  |\nabla_x q(s-t,x,y)| |h(s, y)|  \d y \d s \\
&\ \ \ \  +  \sup_{t \in [0,T-\delta]} \int_{T-2^{-n}}^{T} \int_{\Rd}|\nabla_x q(s-t,x,y)| |h(s, y)| \d y \d s.
\end{split}
\end{eqnarray}
Since $h \in \K_{d,1}$, $\sup_{x \in \Rd} \int_{\delta -2^{-N}}^{T} \int_{B(x,1)}  |h(s, y)| \d y \d s<\infty$. Combining with Lemma \ref{L2.2-1} we see that
\begin{eqnarray}  \label{3.13-1} \nonumber
\begin{split}
 \sup_{x \in \Rd} \int_{\delta -2^{-N}}^{T} \int_{\Rd}  e^{-\frac{C_2|x-y|^2}{2T}} |h(s, y)| \d y \d s < \infty.
\end{split}
\end{eqnarray}
Hence, by (\ref{3.2-1}) and (\ref{3.6-1}), first letting $n \to \infty$ and then letting $\delta \to 0$ in (\ref{3.12-1}), we obtain (\ref{2.20}).
\end{proof}

\vspace{4mm}

 For a matrix $A=(a_{ij})_{1\le i,j \le d}$, define $\| A \|:=\sup_{1\le i,j\le d}|a_{ij}|$. Introduce
\begin{eqnarray}  \label{2.55} \nonumber
\begin{split}
& b_n(t,x):=\int_{\R^{d+1}} \varphi_n(t-s,x-y)b(s, y)\d y \d s, \\
& f_n(t,x):=\int_{\R^{d+1}} \varphi_n(t-s,x-y)f(s, y)\d y \d s .
\end{split}
\end{eqnarray}
%For $t\ge 0$ and $x \in \Rd$, let $X^{t,x}_n$ be the solution of the following SDE:
%\begin{eqnarray}
%\begin{split}
% X^{t,x}_n(s)=x+\int_t^sb_n(\tau,X^{t,x}_n(\tau)) \d \tau+W_{s}-W_t, \ \forall  s \ge t.
%\end{split}
%\end{eqnarray}
For $b, f \in \K_{d,1}$, it is easy to see that $b_n$ and $f_n$ are bounded, smooth functions.
% and the first order derivative of $b_n$ with respect to the spatial variable is bounded.
Thus by \cite[Corrollary VI.4.2 and Theorem VI.4.6]{Friedman1}, for any $T>0$, there exists a unique solution $u_n \in C^{1,2}([0,T]\times \Rd)$ to the following parabolic equations:
\begin{align} \label{2.17}
\left\{
\begin{aligned}
&\partial_t u_n (t,x)+\frac{1}{2}\Delta u_n(t,x)+\< b_n(t,x),  \nabla_x u_n(t,x) \>   = f_n(t,x),&&\forall (t,x) \in (0,T)\times \Rd, \\
& u_n(T,x) = 0, && \forall x \in \Rd.
\end{aligned}
\right.
\end{align}
Furthermore, by \cite[Theorem VI.4.5]{Friedman1} and the boundedness of $f_n$, it is easy to see that $u_n \in C^{0,1}_b([0,T]\times \Rd)$.
\vskip 0.4cm

Now we have the following result on the existence and uniqueness of mild solution to the equation (\ref{2.13}):

\begin{prp} \label{L3.3}
Assume that $|b|$ and $|f|$ belong to the Kato class $\K_{d,1}$. Then there exists a constant $T>0$, which depend only on the functions $\mathcal{N}^{1}_b(\cdot)$ and $\mathcal{N}^{1}_f(\cdot)$, such that the following statements hold:

$(\mathrm{i})$ there exists a unique mild solution $u\in C_b^{0,1}([0,T]\times \Rd)$ to the equation (\ref{2.13}).

$(\mathrm{ii})$ for any $R>0$,
\begin{eqnarray}
&&   \sup_{n\ge 1} \|u_n\|_{C_b^{0,1}([0,T]\times \Rd)}\vee\|u\|_{C_b^{0,1}([0,T]\times \Rd)}  \le M_3 \mathcal{N}^{1}_f(T), \label{2.12-2}\\
% &&  \lim_{n \to \infty}  \sup_{(t,x) \in (0,T) \times B(0,R)}  (|u_n(t,x)-u(t,x)|+|\nabla_x u_n(t,x)-\nabla_x u(t,x)|)=0. \label{2.12}
&&  \lim_{n \to \infty}     \|u_n -u \|_{C_b^{0,1}( [0,T] \times B(0,R))}=0, \label{2.12}
\end{eqnarray}
where $M_3$ is some positve constant depending only on $d$ and $\mathcal{N}^{1}_b(T)$.

$(\mathrm{iii})$
\begin{eqnarray}  \label{2.22} \nonumber
\begin{split}
 |u(t,y)-u(t,x)|\le \frac{1}{2} |x-y|, \ \forall (t,x),(t,y) \in [0,T]\times \Rd.
\end{split}
\end{eqnarray}
\end{prp}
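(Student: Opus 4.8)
The plan is to realise $u$ as the unique fixed point of the operator
\[
(\mathcal{T}v)(t,x):=\int_t^T\int_{\Rd}q(s-t,x,y)\big[\langle b(s,y),\nabla_y v(s,y)\rangle-f(s,y)\big]\,\d y\,\d s
\]
on the Banach space $C_b^{0,1}([0,T]\times\Rd)$ with norm $\|v\|_\infty+\|\nabla v\|_\infty$; we may assume $T\le 1$. The analytic inputs are the Gaussian bounds $|\nabla_x^k q(\sigma,x,y)|\le c\,\sigma^{-(d+k)/2}e^{-|x-y|^2/(c\sigma)}$ for $k=0,1,2$, the finiteness $\mathcal{N}^{\lambda}_{|b|}(T),\mathcal{N}^{\lambda}_{|f|}(T)<\infty$ from Lemma \ref{L2.2}, and the fact that, since $|b|,|f|\in\K_{d,1}$, Definition \ref{D1.1} with $\alpha=1$ forces $\mathcal{N}^{\lambda}_{|b|}(T)\to0$ and $\mathcal{N}^{\lambda}_{|f|}(T)\to0$ as $T\downarrow0$ for every $\lambda>0$. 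Differentiating under the integral and using $\int_{\Rd}q(\sigma,x,y)\,\d y=1$ together with $\sigma^{-d/2}\le\sigma^{-(d+1)/2}$ for $\sigma\le1$, one obtains for a suitable $\lambda>0$
\[
\|\mathcal{T}v\|_{C_b^{0,1}}\le c\,\mathcal{N}^{\lambda}_{|b|}(T)\,\|\nabla v\|_\infty+c\,\mathcal{N}^{\lambda}_{|f|}(T),\qquad \|\mathcal{T}v_1-\mathcal{T}v_2\|_{C_b^{0,1}}\le c\,\mathcal{N}^{\lambda}_{|b|}(T)\,\|v_1-v_2\|_{C_b^{0,1}},
\]
and $\mathcal{T}$ preserves joint continuity. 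Fixing $T$ so small (depending only on $\mathcal{N}^{1}_b(\cdot)$ and $\mathcal{N}^{1}_f(\cdot)$) that $c\,\mathcal{N}^{\lambda}_{|b|}(T)\le\frac12$, $\mathcal{T}$ is a contraction on $C_b^{0,1}([0,T]\times\Rd)$: this proves existence and uniqueness in (i), and the fixed-point identity gives $\|u\|_{C_b^{0,1}}\le 2c\,\mathcal{N}^{\lambda}_{|f|}(T)\le M_3\,\mathcal{N}^{1}_f(T)$. Shrinking $T$ once more so that this bound is $\le\frac12$, we get $\|\nabla u\|_\infty\le\frac12$, and (iii) follows by integrating $\nabla u$ along the segment from $x$ to $y$.

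For the statements about $u_n$ in (ii): since $b_n,f_n$ are bounded and smooth, $u_n\in C^{1,2}([0,T]\times\Rd)$ is a classical solution of (\ref{2.17}), hence by Duhamel's formula the mild solution associated with $(b_n,f_n)$. Because $|b_n|\le\varphi_n*|b|$ and $|f_n|\le\varphi_n*|f|$ pointwise, the monotonicity bound (\ref{2.19}) gives $\mathcal{N}^{\lambda}_{|b_n|}(T)\le\mathcal{N}^{\lambda}_{|b|}(T)$ and $\mathcal{N}^{\lambda}_{|f_n|}(T)\le\mathcal{N}^{\lambda}_{|f|}(T)$, so the contraction estimates above apply to $u_n$ with the same $T$ and the same constants, uniformly in $n$; this is (\ref{2.12-2}).

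The convergence (\ref{2.12}) is the technical heart. Put $w_n:=u_n-u$; subtracting the two mild identities and writing $b_n\cdot\nabla u_n-b\cdot\nabla u=b\cdot\nabla w_n+(b_n-b)\cdot\nabla u_n$ gives $\nabla w_n=\mathcal{L}(\nabla w_n)+\Phi_n$, where
\[
\mathcal{L}g(t,x):=\int_t^T\!\!\int_{\Rd}\nabla_x q(s-t,x,y)\langle b(s,y),g(s,y)\rangle\,\d y\,\d s,\qquad \|\mathcal{L}\|_{L^\infty\to L^\infty}\le c\,\mathcal{N}^{\lambda}_{|b|}(T)\le\tfrac12,
\]
and $\Phi_n(t,x):=\int_t^T\!\int_{\Rd}\nabla_x q(s-t,x,y)\big[\langle(b_n-b)(s,y),\nabla u_n(s,y)\rangle-(f_n-f)(s,y)\big]\,\d y\,\d s$; hence $\nabla w_n=\sum_{k\ge0}\mathcal{L}^k\Phi_n$. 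Both pieces of $\Phi_n$ are covered by Lemma \ref{L3.2}: the $(f_n-f)$-piece is (\ref{2.20}) with $h=f$ and the constant sequence $1$ in the role of $\{f_n\}$ there (whose mollification is again $1$ and for which (\ref{3.2-1}) is trivial), and for each coordinate $j$ the $(b_n^j-b^j)\,\partial_j u_n$-piece is (\ref{2.20}) with $h=b^j\in\K_{d,1}$ and the sequence $\{\partial_j u_n\}$ in the role of $\{f_n\}$, which is uniformly bounded by (\ref{2.12-2}) and satisfies (\ref{3.2-1}) because each $u_n$ is smooth and the relevant higher-order derivative norms of $u_n$, although $n$-dependent, grow more slowly than $2^n$ --- this last point uses (\ref{2.12-2}), the controlled growth of the derivatives of the mollified coefficients, and interior Schauder estimates with a sufficiently small H\"{o}lder exponent. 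Consequently $\Phi_n\to0$ in $L^\infty([0,T]\times B(0,R))$ for every $R>0$, while $\sup_n\|\Phi_n\|_{L^\infty([0,T]\times\Rd)}<\infty$. Inserting this into the Neumann series and using the Gaussian tail estimate of Lemma \ref{L2.2-1} (exactly as in (\ref{3.6-1})) to discard the contributions from $|y|$ large, one checks inductively that $\mathcal{L}^k\Phi_n\to0$ in $L^\infty([0,T]\times B(0,R))$ for each fixed $k$; since $\sum_{k>K}\|\mathcal{L}^k\Phi_n\|_{L^\infty}\le 2^{-K}\sup_n\|\Phi_n\|_{L^\infty}$, it follows that $\nabla w_n\to0$ on $[0,T]\times B(0,R)$, and running the same argument with $q$ in place of $\nabla_x q$ gives $w_n\to0$ there too, which is (\ref{2.12}). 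The main obstacle is precisely this last step: transporting the compact-set convergence of $\Phi_n$ through the operator $\mathcal{L}$, whose kernel is supported on all of $\Rd$, which is what forces the Neumann-series-plus-Gaussian-tail bookkeeping; the other delicate point is quantifying the smoothing of the mollified coefficients finely enough to verify (\ref{3.2-1}) for $\{\partial_j u_n\}$.
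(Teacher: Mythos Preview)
Your overall architecture is close to the paper's: you use the same operator $\mathcal{T}$, the same smallness of $\mathcal{N}^\lambda_{|b|}(T)$ to make it a contraction, and you feed the $(b_n-b)\cdot\nabla u_n$ and $(f_n-f)$ terms into Lemma~\ref{L3.2} exactly as the paper does. The contraction-mapping route to existence is a harmless variation (the paper instead constructs $u$ as the limit of the $u_n$, which folds (\ref{2.12}) into the existence proof), and your Neumann-series bookkeeping for propagating local convergence through $\mathcal{L}$ is a legitimate substitute for the paper's more direct absorption argument.

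The genuine gap is your verification of hypothesis (\ref{3.2-1}) for the sequence $\{\partial_j u_n\}$. You assert that ``the relevant higher-order derivative norms of $u_n$ \ldots\ grow more slowly than $2^n$'' and invoke interior Schauder estimates, but this is not justified: under only $|b|,|f|\in\K_{d,1}$ the mollified coefficients $b_n,f_n$ can have $L^\infty$ norms growing like $2^n$ and H\"older norms growing even faster, so any Schauder constant blows up, and the naive kernel bound $|D_x^2 q|\le c\,\sigma^{-(d+2)/2}e^{-|x-y|^2/(c\sigma)}$ is not integrable against a $\K_{d,1}$ function. The paper bypasses this entirely: it proves (\ref{3.2-1}) directly from the mild representation by inserting the pointwise H\"older-type estimate (\ref{3.8-1}) for $\nabla_x q$, obtaining
\[
|\nabla_x u_n(t_1,x_1)-\nabla_x u_n(t_2,x_2)|\le c\,\mathcal{N}^1_{F_n}(t_2-t_1+\delta)+c\big(|x_1-x_2|^\alpha\delta^{-\frac{d+1+\alpha}{2}}+|t_1-t_2|^\alpha\delta^{-\frac{d+1+2\alpha}{2}}\big),
\]
which is uniform in $n$ since $\mathcal{N}^1_{F_n}\le c\,\mathcal{N}^1_{|b|}+c\,\mathcal{N}^1_{|f|}$ by (\ref{2.19}) and (\ref{2.12-2}). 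Choosing $\delta=2^{-\beta n}$ with $\beta$ small then gives (\ref{3.2-1}) immediately. You should replace your Schauder sketch with this argument; once that is done, the rest of your proof goes through.
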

\begin{proof}
%Since $u_n$ is the solution to the equation (\ref{2.17}), by \cite[Theorem VI.4.6]{Friedman1},
%\begin{eqnarray}    \nonumber
%\begin{split}
%u_n(t,x)=-\int_t^T\int_{\Rd}  \Gamma_n(t,x,s,y)f_n(s,y) \d y \d s,
%\end{split}
%\end{eqnarray}
%where $\Gamma_n$ is the fundamental solution of the equation (\ref{2.17}). By ,
%$$|\nabla_x \Gamma_n(t,x,s,y)| \le c_1 |s-t|^{-\frac{d+1}{2}}e^{-c_2 \frac{|x-y|^2}{s-t}}, \ \forall 0<t<s \le T , \ x,y \in \Rd.$$
%Together with
First we show (\ref{2.12-2}). Assume $u$ is a mild solution to equation (\ref{2.13}). Then
\begin{eqnarray}  \label{2.15} \nonumber
\begin{split}
u(t,x)=\int_t^T\int_{\Rd} q(s-t,x,y) [\< b(s,y),  \nabla_x u(s,y) \>-f(s,y)] \d y \d s.
\end{split}
\end{eqnarray}
Hence
\begin{eqnarray}  \label{2.14}
\begin{split}
|\nabla_x u(t,x)|&\le \int_t^T\int_{\Rd} |\nabla_x  q(s-t,x,y)|  | \< b(s,y),  \nabla_x u(s,y) \>-f(s,y) | \d y \d s \\
&\le  \int_t^T\int_{\Rd} |\nabla_x  q(s-t,x,y)|  |  b(s,y)  | \d y \d s  \sup_{(s,y) \in [0,T] \times \Rd}\|\nabla_x u(s,y) \|\\
&\ \ \ \ + \int_t^T\int_{\Rd} |\nabla_x  q(s-t,x,y)|  |  f(s,y) | \d y \d s\\
&\le  C_0 \mathcal{N}^{1}_b(T)   \sup_{(s,y) \in [0,T] \times \Rd}\|\nabla_x u(s,y) \|+C_0 \mathcal{N}^{1}_f(T),
\end{split}
\end{eqnarray}
for some positive constant $C_0$ depending only on $d$. Since $|b|, |f| \in \K_{d,1}$, take $T>0$ sufficiently small so that
\begin{equation}\label{2.16}
C_0 \mathcal{N}^{1}_b(T)  \le \frac{1}{2}, \  C_0 \mathcal{N}^{1}_f(T)  \le \frac{1}{2d}.
\end{equation}
Combining (\ref{2.14}) with (\ref{2.16}) yields that
\begin{eqnarray}  \label{2.18}
\begin{split}
 \sup_{(t,x) \in [0,T] \times \Rd}|\nabla_x u(t,x)|\le 2 C_0 \mathcal{N}^{1}_f(T).
\end{split}
\end{eqnarray}
By (\ref{2.15}) and (\ref{2.18}),
\begin{eqnarray}   \label{2.56}
\begin{split}
 \sup_{(t,x) \in [0,T] \times \Rd} |u(t,x)| &=\int_t^T\int_{\Rd} q(s-t,x,y) [\< b(s,y),  \nabla_x u(s,y) \>-f(s,y)] \d y \d s\\
 &\le c  \mathcal{N}^{1}_b(T) \mathcal{N}^{1}_f(T)+c \mathcal{N}^{1}_f(T) \le C_1 \mathcal{N}^{1}_f(T),
\end{split}
\end{eqnarray}
$C_1$ is some positve constant dependent only on $d$ and $\mathcal{N}^{1}_b(T)$.

By Lemma \ref{L3.2} and (\ref{2.16}), we have
\begin{eqnarray}  \label{3.23-1}
\begin{split}
\sup_{n \ge 1} C_0 \mathcal{N}^{1}_{b_n}(T)\le C_0 \mathcal{N}^{1}_b(T)\le \frac{1}{2}.
\end{split}
\end{eqnarray}
Following the proofs for  (\ref{2.18}) and (\ref{2.56}), we have for $n \ge 1$,
\begin{eqnarray} \nonumber
\begin{split}
 \sup_{(t,x) \in [0,T] \times \Rd}|\nabla_x u_n (t,x)| \vee \sup_{(t,x) \in [0,T] \times \Rd} |u_n(t,x)| &\le (2C_0\vee C_1) \mathcal{N}^{1}_f(T).
\end{split}
\end{eqnarray}
Hence (\ref{2.12-2}) holds.

\vskip 0.3cm

It is easy to see that the uniqueness of the mild solution to the equation (\ref{2.13}) follows from (\ref{2.12-2}). On the other  hand, since $u_n$ is the solution to the equation (\ref{2.17}), (\ref{2.12}) would imply  that $u$ is the unique mild solution to the equation (\ref{2.13}). Therefore to prove $(\mathrm{i})$ we only need to show that $\{u_n\}_{ n\geq 1 }$  is a Cauchy sequence in $C_b^{0,1}( [0,T] \times B(0,R))$ for any $R>0$.

\vskip 0.3cm

Recall that $u_n$ is the mild solution to the equation (\ref{2.17}) satisfying
\begin{eqnarray}  \nonumber
\begin{split}
u_n(t,x)=\int_t^T\int_{\Rd} q(s-t,x,y) [\< b_n (s,y),  \nabla_x u_n (s,y) \>-f_n(s,y)] \d y \d s.
\end{split}
\end{eqnarray}
%Hence similar to (\ref{2.18}) we have
%\begin{eqnarray}  \nonumber
%\begin{split}
% \sup_{n\ge 1,(t,x) \in [0,T] \times \Rd}|\nabla_x u_n(t,x)|\le 2\sup_{n\ge 1}\mathcal{N}^{1}_{f_n}(T)\le 2 \mathcal{N}^{1}_{f}(T).
% \end{split}
%\end{eqnarray}
For $m,n>0$, by Lemma \ref{L3.2} and (\ref{3.23-1}) we see that
\begin{eqnarray}  \nonumber
\begin{split}
 &\ \ \ \  |\nabla_x u_m(t,x)- \nabla_x u_n(t,x)|\\
&\le \int_t^T\int_{\Rd}| \nabla_x  q(s-t,x,y)| | b_n (s,y)|  \d y \d s \sup_{ (s,y) \in [0,T] \times \Rd} |\nabla_x u_n (s,y)- \nabla_x u_m(s,y)| \\
&\ \ \ \ + | \int_t^T\int_{\Rd}  \nabla_x  q(s-t,x,y)  \< \nabla_x u_m(s,y),  b_n (s,y)- b_m (s,y)  \> \d y \d s | \\
&\ \ \ \ + | \int_t^T\int_{\Rd}  \nabla_x  q(s-t,x,y)   ( f_n(s,y)-  f_m(s,y)  )   \>\d y \d s |\\
&\le \frac{1}{2} \sup_{ (s,y) \in [0,T] \times \Rd} |\nabla_x u_n (s,y)- \nabla_x u_m(s,y)| \\
&\ \ \ \ + | \int_t^T\int_{\Rd}  \nabla_x  q(s-t,x,y)  \< \nabla_x u_m(s,y),  b_n (s,y)- b_m (s,y)  \> \d y \d s | \\
&\ \ \ \ + | \int_t^T\int_{\Rd}  \nabla_x  q(s-t,x,y)   ( f_n(s,y)-  f_m(s,y)  )  \d y \d s |.
\end{split}
\end{eqnarray}
Thus
\begin{eqnarray}  \label{3.19-1}
\begin{split}
 &\ \ \ \   |\nabla_x u_m(t,x)- \nabla_x u_n(t,x)|\\
&\le  2 | \int_t^T\int_{\Rd}  \nabla_x  q(s-t,x,y)  \< \nabla_x u_m(s,y),  b_n (s,y)- b_m (s,y)  \> \d y \d s | \\
&\ \ \ \ + 2 | \int_t^T\int_{\Rd}  \nabla_x  q(s-t,x,y)   ( f_n(s,y)-  f_m(s,y)  )   \d y \d s |.
\end{split}
\end{eqnarray}

On the other hand, setting $F_n(s,y):=\< b_n (s,y),  \nabla_x u_n (s,y) \>-f_n(s,y)$, by (\ref{3.8-1}), for $0\le t_1<t_2 \le T$, $x_1,x_2 \in \Rd$, $\delta \in (0,T)$ and $\alpha \in (0,1)$, we have
\begin{eqnarray}  \nonumber
\begin{split}
 &\ \ \ \  |\nabla_x u_n(t_1,x_1)- \nabla_x u_n(t_2,x_2)|\\
&\le \int_{t_1}^{t_2}\int_{\Rd}| \nabla_x  q(s-t_1,x_1,y)| | F_n (s,y)|  \d y \d s \\
&\ \ \ \ +\int_{t_2}^{T}\int_{\Rd}| \nabla_x  q(s-t_1,x_1,y)-\nabla_x  q(s-t_2,x_2,y)| | F_n (s,y)|  \d y \d s  \\
&\le c \mathcal{N}^{1}_{F_n}(t_2-t_1) + \int_{t_2}^{t_2+\delta}\int_{\Rd}| \nabla_x  q(s-t_1,x_1,y)| | F_n (s,y)|  \d y \d s  \\
&\ \ \ \ +\int_{t_2}^{t_2+\delta}\int_{\Rd}| \nabla_x  q(s-t_2,x_2,y)| | F_n (s,y)|  \d y \d s  \\
&\ \ \ \ +c \int_{t_2+\delta}^{T}\int_{\Rd} |x_1-x_2|^{\alpha} (s-t_2)^{-\frac{d+1+\alpha}{2}}(e^{-\frac{C_2|x_1-y|^2}{2(s-t_2)}}+e^{-\frac{C_2|x_2-y|^2}{2(s-t_2)}} )| F_n (s,y)|  \d y \d s \\
&\ \ \ \ +c \int_{t_2+\delta}^{T}\int_{\Rd}|t_1-t_2|^{\alpha}(s-t_2)^{-\frac{d+1+2\alpha}{2}}  e^{-\frac{C_2|x_2-y|^2}{2(s-t_1)}} | F_n (s,y)|  \d y \d s\\
&\le  c \mathcal{N}^{1}_{F_n}(t_2-t_1)+ \int_{t_1}^{t_2+\delta}\int_{\Rd}| \nabla_x  q(s-t_1,x_1,y)| | F_n (s,y)|  \d y \d s  + c \mathcal{N}^{1}_{F_n}(\delta)  \\
&\ \ \ \ +c( |x_1-x_2|^{\alpha} \delta^{-\frac{d+1+\alpha}{2}} +|t_1-t_2|^{\alpha} \delta^{-\frac{d+1+2\alpha}{2}} )\sup_{x \in \Rd} \int_{\delta}^{T}\int_{\Rd} e^{-\frac{C_2|x-y|^2}{2T}}  | F_n (s+t_1,y)|  \d y \d s\\
&\le  c \mathcal{N}^{1}_{F_n}(t_2-t_1+\delta)+ c( |x_1-x_2|^{\alpha} \delta^{-\frac{d+1+\alpha}{2}} + |t_1-t_2|^{\alpha} \delta^{-\frac{d+1+2\alpha}{2}} ),
\end{split}
\end{eqnarray}
where the last inequality follows from Lemma \ref{L2.2-1} and the fact that $$\sup_{t \ge 0,x\in \Rd,n \ge 1} \int_{\delta}^T\int_{B(x,1)} |F_n(s+t,y)| \d y \d s<\infty.$$
Hence (\ref{3.2-1}) is fulfilled by $\nabla_x u_n$.
Now using Lemma \ref{L3.2} and (\ref{3.19-1}), we see that $\{u_n\}_{ n\geq 1 }$ is a Cauchy sequence in $C_b^{0,1}( [0,T] \times B(0,R))$.

\vskip 0.3cm

$(\mathrm{iii})$ follows from (\ref{2.16}) and (\ref{2.18}).
\end{proof}

\vskip 0.4cm
Next result gives the H\"o{}lder continuity of $\nabla u$ with respect to the time variable.

\begin{prp} \label{L3.3-1}
Assume
\begin{eqnarray} \label{3.15-1}
\sup_{t \ge 0, x\in \Rd}\int_{0}^{r^2} \int_{B(x,r)}(|b(t+s,y)|+|f(t+s,y)|) \d y \d s \le C r^p,  \ \forall 0<r <1.
\end{eqnarray}
for some constants $p>d+1$ and $C>0$. Then for any $\a \in (0, \frac{p-d-1}{2} \wedge \frac{p-d}{d+3} \wedge 1)$, there exists a $M_4>0$ depending on $d,T,\a,p$ and $C$, such that
\begin{eqnarray}  \label{4.23-1}
\begin{split}
  |\nabla_x u(t_1,x)-\nabla_x u(t_2,x)| \le M_4 |t_1-t_2|^{ \a }, \ \forall (t_1,x),(t_2,x) \in [0,T] \times \Rd.
\end{split}
\end{eqnarray}
\end{prp}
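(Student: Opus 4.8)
The plan is to differentiate the mild-solution representation and estimate the resulting convolution of $\nabla_x q$ against $F:=\<b,\nabla_x u\>-f$ directly:
\[
\nabla_x u(t,x)=\int_t^T\int_{\Rd}\nabla_x q(s-t,x,y)\,F(s,y)\,\d y\,\d s,\qquad(t,x)\in[0,T]\times\Rd .
\]
Two preliminaries are needed. First, by Proposition \ref{L3.3} the gradient $\nabla_x u$ is bounded on $[0,T]\times\Rd$, so $|F|\le\|\nabla_x u\|_{\infty}|b|+|f|$ and hence, by (\ref{3.15-1}), $|F|$ satisfies Hypothesis $\mathrm H_p$ with $p>d+1$. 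Secondly, a variant of the computations in the proof of Proposition \ref{P2.3}(ii) --- splitting $\Rd$ into the three regions $\{|x-y|<\sqrt s\}$, $\{\sqrt s\le|x-y|<\sqrt\tau\}$, $\{|x-y|\ge\sqrt\tau\}$, using (\ref{2.4})--(\ref{2.5}) with the admissible index $1<p-d$ together with Lemma \ref{L2.3}, and bookkeeping the powers of $\tau$ instead of merely passing to limits --- gives the quantitative bound
\[
\mathcal{N}^{\lambda}_{|F|}(\tau)\le c\,\tau^{\frac{p-d-1}{2}},\qquad 0<\tau<1,\ \lambda>0 .
\]

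Next, fix $0<t_1<t_2\le T$ and set $h:=t_2-t_1$; it suffices to treat small $h$, since for $h$ bounded below (\ref{4.23-1}) is immediate from the boundedness of $\nabla_x u$. With an auxiliary parameter $\delta\in[h,T)$ split
\[
\nabla_x u(t_1,x)-\nabla_x u(t_2,x)=\mathrm A+\mathrm B_1+\mathrm B_2 ,
\]
where $\mathrm A$ collects the contribution of $s\in(t_1,t_2)$ (only $\nabla_x q(s-t_1,\cdot)$ occurs), $\mathrm B_1$ that of $s\in(t_2,t_2+\delta)$, and $\mathrm B_2$ that of $s\in(t_2+\delta,T)$, where in $\mathrm B_1,\mathrm B_2$ one subtracts $\nabla_x q(s-t_1,x,y)$ and $\nabla_x q(s-t_2,x,y)$. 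Using $|\nabla_x q(t,x,y)|\le c\,t^{-\frac{d+1}{2}}e^{-c|x-y|^2/t}$ and the preliminary estimate one gets $|\mathrm A|\le c\,\mathcal{N}^{\lambda}_{|F|}(h)\le c\,h^{\frac{p-d-1}{2}}$ and, after the obvious change of variables, $|\mathrm B_1|\le c\big(\mathcal{N}^{\lambda}_{|F|}(h+\delta)+\mathcal{N}^{\lambda}_{|F|}(\delta)\big)\le c\,\delta^{\frac{p-d-1}{2}}$.

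The substantial term is $\mathrm B_2$. Here I would invoke the heat-kernel regularity estimate (\ref{3.8-1}): since the two spatial arguments coincide, the space-increment part drops out and for $\gamma\in(0,1)$, $s>t_2$,
\[
|\nabla_x q(s-t_1,x,y)-\nabla_x q(s-t_2,x,y)|\le C_1\,h^{\gamma}\,(s-t_2)^{-\frac{d+1+2\gamma}{2}}\,e^{-\frac{C_2|x-y|^2}{2(s-t_1)}} .
\]
On $\{s\ge t_2+\delta\}$ we have $s-t_2\ge\delta\ge h$, hence $s-t_1\le 2(s-t_2)$ and the Gaussian may be replaced by $e^{-C_2|x-y|^2/(4(s-t_2))}$. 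A dyadic decomposition of $(t_2+\delta,T)$ in time, together with the annular decomposition of $\Rd$ and the scaling bounds of Lemma \ref{L2.3} for the $\mathrm H_p$-function $|F|$, then yields $|\mathrm B_2|\le c\,h^{\gamma}\delta^{\frac{p-d-1}{2}-\gamma}$ when $\gamma>\frac{p-d-1}{2}$, and $|\mathrm B_2|\le c\,h^{\gamma}$ otherwise. Collecting the three bounds and choosing $\delta=h$ with a suitable $\gamma\in(0,1)$ (namely $\gamma$ slightly above $\frac{p-d-1}{2}$ if the latter is $<1$, and $\gamma$ slightly below $1$ otherwise) gives $|\nabla_x u(t_1,x)-\nabla_x u(t_2,x)|\le M_4|t_1-t_2|^{\a}$ for every $\a$ in the asserted range, with $M_4$ depending only on $d,T,\a,p$ and $C$; uniformity in $x$ is automatic because every estimate above is $x$-uniform.

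The main obstacle is exactly the far-time term $\mathrm B_2$. Since $b$ and $f$ are merely of Kato class there is no $L^p$-in-time integrability, so the parabolic Calderon--Zygmund machinery is unavailable; instead the rather singular gradient-difference kernel --- of size $(s-t_2)^{-\frac{d+1+2\gamma}{2}}$ as $s\downarrow t_2$ --- must be absorbed through the dyadic/annular splitting and the scaling estimate of Lemma \ref{L2.3}. The delicate point is the balance between the smallness gain $h^{\gamma}$ (with $\gamma<1$, forced by (\ref{3.8-1})) and the loss near $s=t_2$, and it is this balance that is responsible for the precise range of admissible H\"older exponents in the statement. Checking that the dyadic sums converge with the right exponents is the one genuinely computational step.
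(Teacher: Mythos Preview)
Your proof is correct and takes a genuinely different route from the paper's. The paper derives an interpolated kernel bound
\[
|\nabla_x q(t_1,x,y)-\nabla_x q(t_2,x,y)|\le c|t_1-t_2|^{\a}\bigl(t_1^{-\frac{d+1+2\a}{2}}e^{-\frac{(1-\a)|x-y|^2}{4t_1}}+t_1^{-\frac{\a(d+3)}{2}}t_2^{-\frac{(1-\a)(d+1)}{2}}e^{-\frac{(1-\a)|x-y|^2}{4t_2}}\bigr)
\]
(formula (\ref{2.54})) and then works hard to control the integral of the second, mixed-singularity term against $|F|$ (this is (\ref{2.31}), proved via (\ref{2.29})--(\ref{3.26-1})); the convergence of the dyadic sum there is precisely what forces the extra constraint $\a<\frac{p-d}{d+3}$. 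You instead insert the time-split at $t_2+\delta$ and treat the near-singularity piece $\mathrm B_1$ crudely, by bounding each $\nabla_x q$ separately via the quantitative estimate $\mathcal{N}^\lambda_{|F|}(\tau)\le c\,\tau^{(p-d-1)/2}$; on the far piece $\mathrm B_2$ the single-term bound from (\ref{3.8-1}) suffices because $s-t_1\le 2(s-t_2)$ lets you realign the Gaussian. This sidesteps the mixed kernel entirely and in fact yields the larger range $\a<\frac{p-d-1}{2}\wedge 1$, which of course contains the range stated in the proposition. What the paper's approach buys is that no auxiliary cut-off parameter $\delta$ is needed; what yours buys is a shorter argument, avoidance of the delicate estimate (\ref{2.31}), and a sharper exponent. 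The two computational claims you leave implicit --- the rate for $\mathcal{N}^\lambda_{|F|}(\tau)$ and the $\mathrm B_2$ bound --- follow by exactly the dyadic/annular bookkeeping in the proof of Proposition~\ref{P2.3}(ii) and Lemma~\ref{L2.2}, so there is no gap.
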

\begin{proof}
 Assume (\ref{3.15-1}) holds. Then by Proposition \ref{P2.3}, $|b|$ and $|f|$ belong to the Kato class $\K_{d,1}$. Take a  constant $\a \in (0, \frac{p-d-1}{2} \wedge \frac{p-d}{d+3} \wedge 1)$.
Set $F(s,y):=\< b  (s,y),  \nabla_x u  (s,y) \>-f (s,y)$. First we prove
\begin{eqnarray}  \label{2.31}
\begin{split}
&\ \ \ \ \sup_{0\le t_1<t_2 \le T, x \in \Rd \atop  t_2-t_1<1}\int_{t_2}^T\int_{\Rd}  (s-t_2)^{-\frac{\a(d+3)}{2}} (s-t_1)^{-\frac{(1-\a)(d+1)}{2}}e^{-\frac{(1-\a)|x-y|^2}{4 (s-t_1)}}   |  F(s,y)  | \d y \d s   <\infty .
\end{split}
\end{eqnarray}

Set $\delta:=t_2-t_1<1$. Since $s-t_1 \in (\delta, 2\delta)$ for $s \in (t_2, t_2+\delta)$ and $s-t_2<s-t_1<2(s-t_2)$ for $s \in (t_2+\delta,T)$, we have
\begin{eqnarray}  \label{2.29}
\begin{split}
&\ \ \ \ \int_{t_2}^T\int_{\Rd}  (s-t_2)^{-\frac{\a(d+3)}{2}} (s-t_1)^{-\frac{(1-\a)(d+1)}{2}}e^{-\frac{(1-\a)|x-y|^2}{4 (s-t_1)}}   |  F(s,y)  | \d y \d s   \\
&\le   \int_{t_2}^{t_2+\delta}\int_{\Rd}  (s-t_2)^{-\frac{\a(d+3)}{2}}  \delta^{-\frac{(1-\a)(d+1)}{2}}e^{-\frac{(1-\a)|x-y|^2}{8 \delta}}   |  F(s,y)  | \d y \d s   \\
&\ \ \ \  + c \int_{(t_2+\delta) \wedge T}^{T }\int_{\Rd}  (s-t_2)^{-\frac{\a(d+3)}{2}} (s-t_2)^{-\frac{(1-\a)(d+1)}{2}}e^{-\frac{(1-\a)|x-y|^2}{4 (s-t_2)}}   |  F(s,y)  | \d y \d s   \\
&\le   % \delta^{-\frac{(1-\a)(d+1)}{2}}   \int_{t_2}^{t_2+\delta}\int_{B(x,r)}  (s-t_2)^{-\frac{\a(d+3)}{2}}   |  F(s,y)  | \d y \d s   \\
 \delta^{-\frac{(1-\a)(d+1)}{2}}  \int_{0}^{\delta}\int_{\Rd}  s^{-\frac{\a(d+3)}{2}} e^{-\frac{(1-\a) |x-y|^2}{8 \delta}}   |  F(t_2+s,y)  | \d y \d s   \\
&\ \ \ \  + c \int_{t_2}^{T  }\int_{\Rd}  (s-t_2)^{-\frac{d+1+2\a}{2}} e^{-\frac{(1-\a)|x-y|^2}{4(s-t_2)}}   |  F(s,y)  | \d y \d s .
\end{split}
\end{eqnarray}
Take $0<\lambda<1$ with $p-(2- \lambda)d-(2\a +1) >0$. Then by Lemma \ref{L2.2-1} and Lemma \ref{L2.3} we have
\begin{eqnarray}\label{2.29-1}
\begin{split}
&\ \ \ \   \int_{0}^{\delta}\int_{\Rd}  s^{-\frac{\a(d+3)}{2}} e^{-\frac{(1-\a) |x-y|^2}{8 \delta}}   |  F(t_2+s ,y)  | \d y \d s   \\
&=   \sum_{n \ge 0}\int_{2^{-n-1}\delta}^{2^{-n}\delta}\int_{\Rd} s^{-\frac{\a(d+3)}{2}} e^{-\frac{(1-\a) |x-y|^2}{8 \delta}}   |   F(t_2+s,y)  | \d y \d s   \\
&\le   \sum_{n \ge 0} 2^{\frac{\a(n+1)(d+3)}{2} }\delta^{-\frac{\a(d+3)}{2}} (\int_{2^{-n-1}\delta}^{2^{-n}\delta}\int_{|x-y| < \delta^{\frac{\lambda}{2}}}  e^{-\frac{(1-\a) |x-y|^2}{8 \delta}}   |   F(t_2+s,y)  | \d y \d s\\
&\ \ \ \ +\int_{2^{-n-1}\delta}^{2^{-n}\delta}\int_{|x-y| \ge \delta^{\frac{\lambda}{2}}}  e^{-\frac{(1-\a) |x-y|^2}{8 \delta }}   |   F(t_2+s,y)  | \d y \d s )   \\
&\le \sum_{n \ge 0} 2^{\frac{\a(n+1)(d+3)}{2} }\delta^{-\frac{\a(d+3)}{2}} (\int_{2^{-n-1}\delta}^{2^{-n}\delta}\int_{|x-y| < \delta^{\frac{\lambda}{2}}}     |   F(t_2+s,y)  | \d y \d s\\
&\ \ \ \ +e^{-\frac{(1-\a)\delta^{\lambda-1}}{16 }} \int_{2^{-n-1}\delta}^{2^{-n}\delta}\int_{|x-y| \ge \delta^{\frac{\lambda}{2}}}  e^{-\frac{(1-\a) |x-y|^2}{16 T}}   |   F(t_2+s,y)  | \d y \d s )   \\
%&\le c  \sum_{n \ge 0} 2^{\frac{\a(n+1)(d+3)}{2} }\delta^{-\frac{\a(d+3)}{2}} ( 2^{-\frac{(n+1)(p-d)}{2} } \delta^{ \frac{ p-d }{2} } \delta^{\frac{\lambda d}{2}}  \\
%&\ \ \ \ +e^{-\frac{(1-\a)\delta^{\lambda-1}}{16}} \int_{2^{-n-1}\delta}^{2^{-n}\delta}\int_{ \Rd }  e^{-\frac{(1-\a) |x-y|^2}{16  T}}   |   F(t_2+s,y)  | \d y \d s )   \\
&\le c  \sum_{n \ge 0} 2^{\frac{\a(n+1)(d+3)}{2} }\delta^{-\frac{\a(d+3)}{2}} ( 2^{-\frac{(n+1)(p-d)}{2} }  \delta^{ \frac{ p-d }{2} }  \delta^{\frac{\lambda d}{2}}  \\
&\ \ \ \ + e^{-\frac{(1-\a)\delta^{\lambda-1}}{16}} \sup_{x \in \Rd} \int_{2^{-n-1}\delta}^{2^{-n}\delta}\int_{ B(x,1)}     |   F(t_2+s,y)  | \d y \d s )   \\
&\le c  \sum_{n \ge 0} 2^{\frac{\a(n+1)(d+3)}{2} }\delta^{-\frac{\a(d+3)}{2}} ( 2^{-\frac{(n+1)(p-d)}{2} }  \delta^{ \frac{ p-d }{2} } \delta^{\frac{\lambda d}{2}} + e^{-\frac{(1-\a)\delta^{\lambda-1}}{16 }}  2^{-\frac{(n+1)(p-d)}{2} }  \delta^{ \frac{ p-d }{2} }  )   \\
&= c  \sum_{n \ge 0} 2^{\frac{(n+1)(d \a+ 3\a -p +d )}{2} }  (     \delta^{\frac{ p-d -d \a-3 \a + \lambda d}{2}} + \delta^{ \frac{ p-d -d \a-3 \a}{2}}e^{-\frac{(1-\a)\delta^{\lambda-1}}{16}}   ).
% &\le c  \sum_{n \ge 0} 2^{\frac{(n+1)(d \a+ 3\a -p +d )}{2} } (  \delta^{ \frac{ p-d -d \a-3 \a+\lambda d}{2}} +1 ),
\end{split}
\end{eqnarray}
Note that $d \a+ 3\a -p +d<0 $. (\ref{2.29-1}) yields  that
\begin{eqnarray}  \label{3.26-1}
\begin{split}
&\ \ \ \    \delta^{-\frac{(1-\a)(d+1)}{2}}  \int_{0}^{\delta}\int_{\Rd}  s^{-\frac{\a(d+3)}{2}} e^{-\frac{(1-\a) |x-y|^2}{8 \delta}}   |  F(t_2+s,y)  | \d y \d s  \\
&\le c  \sum_{n \ge 0} 2^{\frac{(n+1)(d \a+ 3\a -p +d )}{2} } ( \delta^{ \frac{ p-(2- \lambda)d-(2 \a +1) }{2} }+ \delta^{ \frac{ p-d -d \a-3 \a-(1-\a)(d+1)}{2}}e^{-\frac{(1-\a)\delta^{\lambda-1}}{16 }} ) \le c,
\end{split}
\end{eqnarray}
where we used the fact that $\l-1<0$ in the last inequality. By Proposition \ref{P2.3}, (\ref{3.15-1}) and the fact that  $2\a <p-d-1$, we see that $|F| \in \K_{d,1-2\a}$. Therefore (\ref{2.29}) and (\ref{3.26-1}) imply
\begin{eqnarray}  \nonumber
\begin{split}
&\ \ \ \ \int_{t_2}^T\int_{\Rd}  (s-t_2)^{-\frac{\a(d+3)}{2}} (s-t_1)^{-\frac{(1-\a)(d+1)}{2}}e^{-\frac{(1-\a)|x-y|^2}{4(s-t_1)}}   |  F(s,y)  | \d y \d s   \\
&\le  c +c \int_{t_2 }^{T}\int_{\Rd}  (s-t_2)^{-\frac{d+1+2\a}{2}} e^{-\frac{(1-\a)|x-y|^2}{4(s-t_2)}}   |  F(s,y)  | \d y \d s \le c.
\end{split}
\end{eqnarray}
(\ref{2.31}) is proved.
\vskip 0.3cm

Now we show (\ref{4.23-1}). Noting that  for $0<t_1<t_2$,
\begin{eqnarray}  \nonumber
\begin{split}
 &\ \ \ \  |\nabla_x q(t_1,x,y)- \nabla_x q(t_2,x,y)|\\
&\le  |t_1-t_2| \int_0^1 | \partial_t \nabla_x q(t_1+\lambda(t_2-t_1),x, y)|       \d \lambda \le c |t_1-t_2| t_1^{-\frac{d+3}{2}},
\end{split}
\end{eqnarray}
we have
\begin{eqnarray}  \label{2.54}
\begin{split}
 &\ \ \ \  |\nabla_x q(t_1,x,y)- \nabla_x q(t_2,x,y)|\\
&\le  |\nabla_x q(t_1,x,y)- \nabla_x q(t_2,x,y)|^{ \a } |\nabla_x q(t_1,x,y)+\nabla_x q(t_2,x,y)|^{1- \a } \\
&\le c |t_1-t_2|^{ \a } t_1^{-\frac{\a(d+3)}{2}} (t_1^{-\frac{(1-\a)(d+1)}{2}}e^{-\frac{(1-\a)|x-y|^2}{4 t_1}}+t_2^{-\frac{(1-\a)(d+1)}{2}}e^{-\frac{(1-\a)|x-y|^2}{4 t_2}})\\
&\le c |t_1-t_2|^{ \a } (t_1^{-\frac{d+1+2\a}{2}} e^{-\frac{(1-\a)|x-y|^2}{4 t_1}}+t_1^{-\frac{\a(d+3)}{2}}t_2^{-\frac{(1-\a)(d+1)}{2}}e^{-\frac{(1-\a)|x-y|^2}{4 t_2}}).
\end{split}
\end{eqnarray}
For $0\le t_1<t_2 \le T$ with $t_2-t_1<1$, using (\ref{2.31}) and (\ref{2.54}) we get
\begin{eqnarray}  \label{2.30} \nonumber
\begin{split}
&\ \ \ \ |\nabla_x u(t_1,x)-\nabla_x u(t_2,x)|\\
&\le \int_{t_1}^{t_2}\int_{\Rd} |\nabla_x q(s-t_1,x, y)|  | F(s,y) | \d y \d s \\
&\ \ \ \ + \int_{t_2}^T\int_{\Rd} |\nabla_x q(s-t_1,x, y)-\nabla_x q(s-t_2,x, y) |  |  F(s,y)  | \d y \d s  \\
&\le c |t_1-t_2|^{\a} \int_{t_1}^{t_2}\int_{\Rd}  (s-t_1)^{-\frac{d+1+2\a}{2}} e^{-\frac{|x-y|^2}{4(s-t_1)}}   | F(s,y) | \d y \d s \\
&\ \ \ \ +  c |t_1-t_2|^{ \a } \int_{t_2}^T\int_{\Rd}  (s-t_2)^{-\frac{d+1+2\a}{2}} e^{-\frac{(1-\a)|x-y|^2}{4 (s-t_2)}}   |  F(s,y)  | \d y \d s  \\
&\ \ \ \ +  c |t_1-t_2|^{ \a } \int_{t_2}^T\int_{\Rd}  (s-t_2)^{-\frac{\a(d+3)}{2}} (s-t_1)^{-\frac{(1-\a)(d+1)}{2}}e^{-\frac{(1-\a)|x-y|^2}{4 (s-t_1)}}   |  F(s,y)  | \d y \d s \\
&\le c |t_1-t_2|^{ \a }.
\end{split}
\end{eqnarray}
\end{proof}

\begin{remark}
Using H\"{o}lder's inequality, it is easy to see that  (\ref{3.15-1}) holds if $|b|^2$ and $|f|^2$ satisfy the Hypothesis  $\mathrm{H_p}$.
\end{remark}

\vskip 0.3cm
Next, we will establish some regularities of the mild solution of equation (\ref{2.13}), which plays a curial role later. We start with the  following  $L^2$-estimate.

%Although the following estimate
%\begin{eqnarray}  \label{2.24}
%\begin{split}
%\int_{a}^{b} \int_{\Rd} \| D^2_xu (t,x) \|^2 \d x \d t \le C(\int_{a}^{b} \int_{\Rd}| f  (t,x)|^2 \d x\d t+\int_{\Rd} |\nabla_{x}   u (b,x)|^2 \d x)
%\end{split}
%\end{eqnarray}
%have been established in \cite[Theorem 7.1.5]{Evans} and in the above book, the constant $C$ in (\ref{2.24}) is dependent on the domain (the number R) and $a-b$. We want to give a precise examiation that $C$ is independent on $R$ and $a-b$.
\begin{lem}  \label{L3.4}
 Suppose a function $ g:[0,T] \times \Rd \to \R$ vanishes outside  $[0,T] \times B(x_0,r)$ for some $r \in (0,2)$, and moreover $|g|^2$ satisfies the Hypothesis  $\mathrm{H_p}$. Let $u$ be the mild solution of the following equation:
\begin{align} \nonumber
\left\{
\begin{aligned}
&\partial_t u (t,x)+\frac{1}{2}\Delta u  (t,x)  = g(t,x),&&\forall (t,x) \in (0,T)\times \Rd, \\
& u  (T,x) = 0 , && \forall x \in \Rd.
\end{aligned}
\right.
\end{align}
Then $u\in W^{1,2}_2((0,T)\times \Rd)$. Furthermore, there exist constants  $\a,\b \in (0,p-d)$ and $M_5>0$, which are independent of $x_0$ and $r$, such that
for any $0\le a<b  \le T$,
\begin{eqnarray}  \label{2.52}
\begin{split}
\int_{a}^{b} \int_{\Rd} \| D^2_xu  \|^2 \d x \d t  \le M_5  (\int_{a}^{b} \int_{B(x_0,r)}|   g  (t,x) |^2 \d x\d t +r^{d}(b-a)^{\a}  + r^{p-\b}).
\end{split}
\end{eqnarray}
\end{lem}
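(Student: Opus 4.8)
Since $|g|^2$ satisfies Hypothesis $\mathrm{H_p}$, Lemma \ref{L2.3} gives $g\in L^2((0,T)\times\Rd)$, so the classical $L^2$-theory for the heat equation shows $u\in W^{1,2}_2((0,T)\times\Rd)$ (with $\partial_t u=g-\tfrac12\Delta u\in L^2$). To establish (\ref{2.52}) I would fix $0\le a<b\le T$ and split the source in time at $b$: let $u_1$ be the mild solution of the same equation with source $g\,I_{(a,b)}$, and set $u_2:=u-u_1$, so that for $t\in[a,b]$
\[
u_1(t,x)=-\int_t^{b}\!\!\int_{\Rd}q(s-t,x,y)g(s,y)\,\d y\,\d s,\qquad
u_2(t,x)=-\int_b^{T}\!\!\int_{\Rd}q(s-t,x,y)g(s,y)\,\d y\,\d s .
\]
The $L^2$-maximal regularity estimate applied to $u_1$ on $(0,b)\times\Rd$ gives $\int_a^{b}\!\int_{\Rd}\|D^2_xu_1\|^2\le c\int_a^{b}\!\int_{B(x_0,r)}|g|^2$, the first term of (\ref{2.52}); the whole issue is $\int_a^b\int_{\Rd}\|D^2_xu_2\|^2$.

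On $(a,b)\times\Rd$ the source of $u_2$ vanishes, so $u_2$ solves the homogeneous backward heat equation there; by the Chapman--Kolmogorov identity $u_2(t,\cdot)=\int_{\Rd}q(b-t,\cdot,z)u_2(b,z)\,\d z$ for $t\in[a,b]$, and hence, by Plancherel,
\[
\int_a^{b}\!\int_{\Rd}\|D^2_xu_2\|^2\,\d x\,\d t\le c\int_{\Rd}|\xi|^2\bigl(1-e^{-(b-a)|\xi|^2}\bigr)\,\bigl|\widehat{u_2(b,\cdot)}(\xi)\bigr|^2\,\d\xi ,\qquad \widehat{u_2(b,\cdot)}(\xi)=-\int_0^{T-b}e^{-\frac\sigma2|\xi|^2}\widehat{g(b+\sigma,\cdot)}(\xi)\,\d\sigma .
\]
A Cauchy--Schwarz in $\sigma$ against the weight $e^{-\frac\sigma2|\xi|^2}$ (of $L^1(0,\infty)$-norm $2|\xi|^{-2}$) bounds the right-hand side by $c\int_0^{T-b}\!\int_{\Rd}\bigl(1-e^{-(b-a)|\xi|^2}\bigr)e^{-\frac\sigma2|\xi|^2}|\widehat{g(b+\sigma,\cdot)}(\xi)|^2\,\d\xi\,\d\sigma$, which I would split according to $\int_0^{T-b}=\int_0^{r^2\wedge(T-b)}+\int_{r^2}^{T-b}$.

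For the short times $\sigma\le r^2$, use $1-e^{-(b-a)|\xi|^2}\le\bigl((b-a)|\xi|^2\bigr)^{\theta}$ for a small $\theta\in(0,\frac{p-d}2)$ together with $|\xi|^{2\theta}e^{-\frac\sigma2|\xi|^2}\le c\sigma^{-\theta}$, integrate $\xi$ out by Plancherel, and then a dyadic decomposition in $\sigma$ with (\ref{2.27}) gives $c(b-a)^{\theta}r^{p-2\theta}$; as $p-2\theta>d$ and $r<2$ this is $\le c\,r^{d}(b-a)^{\theta}$, the second term of (\ref{2.52}) with $\alpha=\theta$. For the long times $\sigma>r^2$ the Gaussian $e^{-\frac\sigma2|\xi|^2}$ no longer feels the scale $r$, and the naive estimate only yields the useless power $r^{p-2}$; the essential point is to exploit the support of $g(b+\sigma,\cdot)$: since $\int_{\Rd}e^{-\frac\sigma2|\xi|^2}|\widehat{g(b+\sigma,\cdot)}(\xi)|^2\,\d\xi=c\iint q(\sigma,x,y)g(b+\sigma,x)g(b+\sigma,y)\,\d x\,\d y$, the bound $q(\sigma,x,y)\le c\sigma^{-d/2}$ for $x,y\in B(x_0,r)$ and Cauchy--Schwarz $\bigl(\int_{B(x_0,r)}|g|\bigr)^2\le|B(x_0,r)|\int_{B(x_0,r)}|g|^2$ give $\le c\,\sigma^{-d/2}r^{d}\int_{B(x_0,r)}|g(b+\sigma,y)|^2\,\d y$; a dyadic decomposition in $\sigma$ and (\ref{2.28}) then yield $\le c\,r^{p}$ when $d\ge3$ and $\le c\,r^{p}\log(T/r^2)\le c_{\beta}\,r^{p-\beta}$ for any $\beta>0$ when $d=2$, which is the third term of (\ref{2.52}). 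Collecting the three contributions proves (\ref{2.52}) with suitable $\alpha,\beta\in(0,p-d)$.

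\emph{Main obstacle.} The delicate point is the long-time tail $\sigma>r^2$ of $u_2$: once the heat kernel is spread over a scale much larger than $r$, Kato control alone only gives the power $r^{p-2}$, which is worthless when $d<p\le d+2$; the compact support of $g$---through the $\sigma^{-d/2}$-decay of the heat kernel on $B(x_0,r)\times B(x_0,r)$ and the factor $|B(x_0,r)|$ from Cauchy--Schwarz---is exactly what restores a full power $r^{p}$ (up to a logarithm when $d=2$). Everything else ($u\in W^{1,2}_2$, the Plancherel identities, the dyadic sums) is routine. A self-contained alternative for the $u_2$-step is a Caccioppoli identity: testing the equation with $-\Delta u$ over $(a,b)\times\Rd$ yields $\int_a^b\int_{\Rd}\|D^2_xu\|^2\le 4\int_a^b\int_{B(x_0,r)}|g|^2+2\|\nabla_x u(b,\cdot)\|_{L^2}^2$, and $\|\nabla_x u(b,\cdot)\|_{L^2}^2$ is estimated by the same short-/long-time splitting of its Fourier representation.
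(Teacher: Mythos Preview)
Your argument is correct and reaches the stated bound, but the route is genuinely different from the paper's. The paper works entirely in physical space: it mollifies $g$, derives the energy identity $\int_a^b\!\int\|D^2_xu\|^2\le 4\int_a^b\!\int|g|^2+4\bigl(\|\nabla_xu(b,\cdot)\|_{L^2}^2-\|\nabla_xu(a,\cdot)\|_{L^2}^2\bigr)$ by integration by parts (exactly your ``Caccioppoli alternative''), and then splits the boundary term $\|\nabla_xu(b)\|^2-\|\nabla_xu(a)\|^2$ into a near-field piece on $B(x_0,2r)$ and a far-field piece on $\{|x-x_0|\ge 2r\}$. The near-field piece is bounded using the time-H\"older continuity of $\nabla_xu$ established separately in Proposition~\ref{L3.3-1}, which produces the factor $r^d(b-a)^{\alpha}$; the far-field piece $\int_{|x-x_0|\ge 2r}|\nabla_xu(t,x)|^2\,\d x$ is estimated by writing out $\nabla_xu$ as an integral against $\nabla_xq$, squaring, and performing a rather laborious double time-splitting at $s=r^2$ (the computation~(\ref{2.51}) and what follows), which yields $r^p$ and $r^{p-\beta}$. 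Your Fourier approach replaces both of these steps: the factor $(1-e^{-(b-a)|\xi|^2})\le((b-a)|\xi|^2)^{\theta}$ produces the $(b-a)^{\theta}$ gain directly, bypassing Proposition~\ref{L3.3-1} entirely, and the long-time tail is handled by the single observation $\int e^{-\frac{\sigma}{2}|\xi|^2}|\widehat{g}|^2\,\d\xi=c\iint q(\sigma,x,y)g\,g\le c\sigma^{-d/2}r^d\int|g|^2$, which is considerably shorter than the paper's far-field computation. The trade-off is that your argument relies on translation invariance through Plancherel, whereas the paper's physical-space estimates would adapt more readily to variable-coefficient operators; for the equation at hand, your proof is the more economical one. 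One small remark: your ``self-contained alternative'' at the end, as stated, loses the $(b-a)^{\alpha}$ factor in the near-field (you only get $\|\nabla_xu(b)\|_{L^2}^2$, not the difference), so it proves a slightly weaker bound $c\int_a^b\!\int|g|^2+cr^{p-\beta}$---still sufficient for the application in Proposition~\ref{P3.5}, but not literally the inequality~(\ref{2.52}).
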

\begin{proof}
Let $g_n(t,x):=\int_{\R^{d+1}} \varphi_n(t-s,x-y) g(s, y)\d y \d s$ and $u_n \in C_b^{1,2}([0,T]\times \Rd)$ be the mild solution of the following equation:
\begin{align} \label{2.32} \nonumber
\left\{
\begin{aligned}
&\partial_t u_n (t,x)+\frac{1}{2}\Delta u_n  (t,x)  = g_n(t,x),&&\forall (t,x) \in (0,T)\times \Rd, \\
& u_n (T,x) = 0 , && \forall x \in \Rd.
\end{aligned}
\right.
\end{align}
%Take a smooth function $\psi \in C_c^{\infty}(B(0,2R))$ satisfying that $\psi=1$ on $B(0,R)$.  Set $\tilde f_n(t,x):=f_n(t,x) \psi(x)+\frac{1}{2}(\< \nabla_x u_n(t,x), \nabla_x \psi(x) \>+ u_n(t,x)\Delta \psi(x))$, then the function $\tilde u_n :=u_n \psi$ satisfies the following equation
%\begin{align} \label{2.23}
%\left\{
%\begin{aligned}
%&\partial_t \tilde u_n (t,x)+\frac{1}{2}\Delta \tilde u_n  (t,x)  =\tilde f_n(t,x),&&\forall (t,x) \in (0,T)\times \Rd, \\
%& \tilde u_n (T,x) = 0 , && \forall x \in \Rd.
%\end{aligned}
%\right.
%\end{align}
%Since $\tilde u_n \in C_c^{\infty}(B(0,2R))$,
Using the representation
\begin{eqnarray}  \nonumber
\begin{split}
u_n(t,x)=\int_t^T \int_{B(x_0,r+1)} q(s-t,x,y)  g_n(s,y) \d y \d s,
\end{split}
\end{eqnarray}
we can see easily that for any $R \ge 1$ and $|x-x_0|\ge R+r+1$,
\begin{eqnarray}  \label{2.33}
\begin{split}
\sup_{n \ge 1, t\ge 0} \{\|D^2_x u_n(t,x)\|+|\nabla_xu_n(t,x)|+|\partial_t u_n(t,x)|\} \le c e^{-\frac{R^2}{4T}}.
\end{split}
\end{eqnarray}
Using the integration by parts  we have
\begin{eqnarray}  \label{3.35-1}
\begin{split}
\int_{\Rd} \|  g_n(t,x) \|^2 \d x  =  \int_{\Rd} (|\partial_t u_n(t,x)|^2+\frac{1}{4}|\Delta u_n(t,x)|^2) \d x  -\int_{\Rd}  \partial_t |\nabla_xu_n(t,x)|^2 \d x.
\end{split}
\end{eqnarray}
and
\begin{eqnarray}  \label{2.35}
\begin{split}
& \ \ \ \  \int_{\Rd} \|  g_n(t,x)- g_m(t,x) \|^2 \d x   \\
&=  \int_{\Rd} (|\partial_t u_n(t,x)-\partial_t u_m(t,x)|^2+\frac{1}{4}|\Delta u_n(t,x)-\Delta u_m(t,x)|^2) \d x\\
& \ \ \ \  -\int_{\Rd}  \partial_t |\nabla_xu_n(t,x)-\nabla_xu_m(t,x)|^2 \d x. \\
\end{split}
\end{eqnarray}
On the other hand, (\ref{2.33}) and the Green's formula give that
%that
%\begin{eqnarray}
%\begin{split}
%\int_{B(0,R)}  \Delta h(x) g(x) \d x& =\int_{\partial B(0,R)}  \frac{\partial h}{\partial \bold{n}} (x) g(x) \sigma(\d x) -\int_{B(0,R)}  \<\nabla_xh(x), \nabla_xg(x) \> \d x, \\
% \int_{B(0,R)}  |\Delta h(x) |^2 \d x &=\int_{\partial B(0,R)}  \frac{\partial h}{\partial \bold{n}} (x) \Delta h(x)  \sigma(\d x) -\sum_{i =1}^{d}\int_{\partial B(0,R)}  \frac{\partial \partial_i h}{\partial \bold{n}} (x) \partial_i h(x)  \sigma(\d x)\\
%&\ \ \ \ +\sum_{i,j=1}^{d}\int_{a}^{b} \int_{B(0,R)}  | \partial_{ij}^2 h( x) |^2 \d x ,
%\end{split}
%\end{eqnarray}
%and (\ref{2.33})
\begin{eqnarray}  \label{2.34}
\begin{split}
\sum_{i,j=1}^{d}  \int_{\Rd} | \partial_{ij}^2 u_n(t,x)-\partial_{ij}^2 u_m(t,x)  |^2 \d x =   \int_{\Rd}  | \Delta u_n(t,x)-\Delta u_m(t,x)  |^2 \d x .
\end{split}
\end{eqnarray}
Hence by (\ref{3.35-1})-(\ref{2.34}), we have
\begin{eqnarray}  \label{2.49}
\begin{split}
& \ \ \ \ \sum_{i,j=1}^{d}\int_{0}^{T} \int_{\Rd}  | \partial_{ij}^2 u_n(t,x)-\partial_{ij}^2 u_m(t,x) |^2 \d x \d t +\int_{0}^{T} \int_{\Rd} |\partial_t u_n(t,x)-\partial_t u_m(t,x)|^2 \d x \d t \\
& \le 4 \int_{0}^{T} \int_{\Rd}|   g_n (t,x)-   g_m (t,x)|^2 \d x\d t ,
\end{split}
\end{eqnarray}
and
\begin{eqnarray} \label{2.36}
\begin{split}
& \ \ \ \ \sum_{i,j=1}^{d}\int_{a}^{b} \int_{\Rd}  | \partial_{ij}^2 u_n(t,x)|^2 \d x \d t +\int_{a}^{b} \int_{\Rd}  |\partial_t u_n(t,x) |^2 \d x \d t \\
& \le 4 \int_{a}^{b} \int_{\Rd}|   g_n (t,x)|^2 \d x\d t +4\int_{\Rd} (|\nabla_{x}  u_n (b,x) |^2-|\nabla_{x}   u_n (a,x) |^2) \d x ,
\end{split}
\end{eqnarray}
for any $0\le a < b \le T$.

 Recall that $u_n$ converges to $u$ uniformly on the compact set of $[0,T]\times \Rd$ by Proposition \ref{L3.3}. Hence (\ref{2.49}) implies that $u \in W^{1,2}_2( (0,T)\times \Rd)$ and
\begin{eqnarray}  \label{2.26}
\begin{split}
\lim_{n \to \infty}\int_{a}^{b} \int_{\Rd} \| D^2_xu_n(t,x)-D^2_xu(t,x) \|^2 \d x \d t =0.
\end{split}
\end{eqnarray}
By Proposition \ref{L3.3} and  (\ref{2.33}) we also have
\begin{eqnarray}  \label{3.34-1}
\begin{split}
\lim_{n \to \infty}  \int_{\Rd}   |\nabla_{x}  u_n(t,x)- \nabla_{x} u(t,x) |^2 \d x =0, \ \forall t \in [0,T].
\end{split}
\end{eqnarray}
Thus, by (\ref{2.26}) and (\ref{3.34-1}), letting $n \to \infty$ in (\ref{2.36}) we get
\begin{eqnarray}  \label{2.50}
\begin{split}
& \ \ \ \ \sum_{i,j=1}^{d}\int_{a}^{b} \int_{\Rd} | \partial_{ij}^2 u (t,x) |^2 \d x \d t   \\
& \le 4(\int_{a}^{b} \int_{\Rd}|   g  (t,x) |^2 \d x\d t +\int_{\Rd} (|\nabla_{x}  u  (b,x)|^2 -|\nabla_{x}   u  (a,x) |^2) \d x )\\
& \le  4(\int_{a}^{b}  \int_{B(x_0,r)}    | g  (t,x) |^2 \d x\d t +\int_{|x-x_0|\ge 2r}  |\nabla_{x}  u  (b,x) |^2   \d x \\
&\ \ \ \  +\int_{B(x_0,2r)}   |\nabla_{x}  u  (b,x) -\nabla_{x}   u  (a,x) |(|\nabla_{x}  u  (b,x)|+ |\nabla_{x}   u  (a,x) |)  \d x )\\
& \le c (\int_{a}^{b} \int_{B(x_0,r)} | g  (t,x) |^2 \d x\d t     + \int_{|x-x_0|\ge 2r}  |\nabla_{x}  u  (b,x)|^2 \d x +(b-a)^{\a}r^d),
\end{split}
\end{eqnarray}
where the last inequality follows from Proposition \ref{L3.3} and Proposition \ref{L3.3-1}, and $\a \in (0,p-d)$ is the constant appeared in Proposition \ref{L3.3-1}.

On the other hand, using H\"{o}lder's inequality and the fact that $|g|^2$ satisfies the Hypothesis  $\mathrm{H_p}$, it is easy to see that $g$ satisfies the Hypothesis $(H_{q}')$ with $q:=\frac{p+d+2}{2}>d+1$. Therefore we have for  $t \in [0,T]$,
\begin{eqnarray}  \label{2.51}
\begin{split}
& \ \ \ \ \int_{|x-x_0|\ge 2r}  |\nabla_{x}  u  (t,x)|^2  \d x  \\
&  \le \int_{ |x-x_0|\ge 2r } \int_t^T\int_{|y-x_0|<r} |\nabla_{x}  q(s-t,x,y)| | g(s,y)|  \d y \d s \\
&\ \ \ \ \times \int_t^T\int_{|z-x_0|<r} |\nabla_{x}  q(\tau-t,x,z)| | g(\tau ,z)|  \d z \d \tau \d x  \\
&  \le c  \int_t^T \int_t^T \int_{|y-x_0|<r} \int_{|z-x_0|<r} (s-t)^{-\frac{ 1}{2}} e^{-\frac{r^2}{8(s-t)}} (\tau-t)^{-\frac{ 1}{2}} e^{-\frac{r^2}{8(\tau-t)}} | g(s,y)| | g(\tau ,z)|   \d z \d y  \d \tau \d s \\
& \ \ \ \ \times  \int_{ |x-x_0|\ge 2r } (s-t)^{-\frac{d }{2}}  (\tau -t)^{-\frac{d }{2}} e^{-\frac{|x-y|^2}{8(s-t)}}  e^{-\frac{|x-z|^2}{8(\tau-t)}}  \d x    \\
&  \le c  \int_0^{T-t} \int_0^{T-t} \int_{|y-x_0|<r} \int_{|z-x_0|<r} s^{-\frac{ 1}{2}} e^{-\frac{r^2}{8s}} \tau^{-\frac{ 1}{2}} e^{-\frac{r^2}{8\tau}} (s+\tau)^{-\frac{d }{2}} e^{-\frac{|y-z|^2}{8(s+\tau)}}  \\
&\ \ \ \ \times | g(s+t,y)| | g(\tau +t ,z)|   \d z \d y  \d \tau \d s \\
&  \le c  \int_0^{T-t} \int_{|y-x_0|<r} s^{-\frac{ 1}{2}} e^{-\frac{r^2}{8s}}  | g(s+t,y)| \d y \d s  \int_s^{T-t} \int_{|z-x_0|<r} \tau^{-\frac{ 1}{2}} e^{-\frac{r^2}{8\tau}} (s+\tau)^{-\frac{d }{2}}     | g(\tau +t ,z)|   \d z  \d \tau  \\
&  \le c  \int_0^{r^2} \! \! \int_{|y-x_0|<r} s^{-\frac{ 1}{2}} e^{-\frac{r^2}{8s}}  | g(s+t,y)| \d y \d s  \int_s^{(T-t) \vee r^2} \int_{|z-x_0|<r} \tau^{-\frac{ 1}{2}} e^{-\frac{r^2}{8\tau}} (s+\tau)^{-\frac{d }{2}}     | g(\tau +t ,z)|   \d z  \d \tau  \\
& \ \ \ \ + c  \int_{r^2}^{ (T-t) \vee r^2} \! \! \! \! \int_{|y-x_0|<r} \! \! s^{-\frac{ 1}{2}} e^{-\frac{r^2}{8s}}  | g(s+t,y)| \d y \d s  \int_s^{T-t} \! \! \! \! \int_{|z-x_0|<r} \! \! \tau^{-\frac{ 1}{2}} e^{-\frac{r^2}{8\tau}} (s+\tau)^{-\frac{d }{2}}     | g(\tau +t ,z)|   \d z  \d \tau  \\
&:= \mathrm{I}+\mathrm{II}.
\end{split}
\end{eqnarray}

For  $s \in (0,r^2)$, by (\ref{2.27}) we have
\begin{eqnarray}  \nonumber
\begin{split}
& \ \ \ \ \int_s^{(T-t) \vee r^2} \int_{|z-x_0|<r} \tau^{-\frac{ 1}{2}} e^{-\frac{r^2}{8\tau}} (s+\tau)^{-\frac{d}{2}}     | g(\tau +t ,z)|   \d z  \d \tau  \\
&  \le   \sum_{n \ge 1} \int_{ns}^{(n+1)s} \int_{|z-x_0|<r} \tau^{-\frac{ 1}{2}} (s+\tau)^{-\frac{d }{2}}     | g(\tau +t ,z)|   \d z  \d \tau  \\
&  \le  \sum_{n \ge 1} \int_{ns}^{(n+1)s} \int_{|z-x_0|<r} n^{-\frac{ 1}{2}} (n+1)^{-\frac{d  }{2}}  s^{-\frac{ d+1 }{2}}      | g(\tau +t ,z)|   \d z  \d \tau  \\
&  \le c s^{-\frac{ d+1 }{2}}  s^{\frac{q-d}{2}} r^d \sum_{n \ge 1}   n^{-\frac{ 1}{2}} (n+1)^{-\frac{d  }{2}}  \le c s^{-\frac{ 2d+1 -q}{2}}  r^d.  \\
\end{split}
\end{eqnarray}
Therefore
\begin{eqnarray}  \label{3.40-1}
\begin{split}
\mathrm{I} & \le c r^d \int_0^{r^2}  \int_{|y-x_0|<r} s^{-\frac{ 1}{2}} e^{-\frac{r^2}{8s}} s^{-\frac{ 2d+1 -q}{2}}   | g(s+t,y)| \d y \d s \\
&  \le c r^d r^{-2d-2+q} \int_0^{r^2}  \int_{|y-x_0|<r}  | g(s+t,y)| \d y \d s \\
& \le c r^{-d-2 +2q} = c r^{p } .
\end{split}
\end{eqnarray}
For the term $\mathrm{II}$, we only need to consider the situation: $r^2<T-t$. Take some constant $\b \in (0,p-d)$. For $s \in (r^2, T-t)$, by (\ref{2.28}) we have
\begin{eqnarray}  \nonumber
\begin{split}
& \ \ \ \ \int_s^{T-t} \int_{|z-x_0|<r} \tau^{-\frac{ 1}{2}} e^{-\frac{r^2}{8\tau}} (s+\tau)^{-\frac{d}{2}}     | g(\tau +t ,z)|   \d z  \d \tau  \\
&  \le (2T)^{\frac{\b}{2}} \sum_{n \ge 1} \int_{ns}^{(n+1)s} \int_{|z-x_0|<r} \tau^{-\frac{ 1}{2}} (s+\tau)^{-\frac{d +\b}{2}}     | g(\tau +t ,z)|   \d z  \d \tau  \\
&  \le c \sum_{n \ge 1} n^{-\frac{ 1}{2}} (n+1)^{-\frac{d +\b }{2}}  s^{-\frac{ d+1 + \b}{2}}  \int_{ns}^{(n+1)s} \int_{|z-x_0|<r}      | g(\tau +t ,z)|   \d z  \d \tau  \\
&  \le c s^{-\frac{ d+1+\b }{2}}  s r^{q-2} \sum_{n \ge 1}   n^{-\frac{ 1}{2}} (n+1)^{-\frac{d +\b }{2}}  \le c s^{-\frac{ d-1 +\b}{2}}  r^{q-2}.  \\
\end{split}
\end{eqnarray}
Therefore
\begin{eqnarray}  \label{3.41-1}
\begin{split}
\mathrm{II} & \le c r^{q-2}  \int_{r^2}^{T-t}  \int_{|y-x_0|<r} s^{-\frac{ 1}{2}} e^{-\frac{r^2}{8s}} s^{-\frac{ d-1+ \b}{2}}   | g(s+t,y)| \d y \d s \\
&  \le c r^{q-2-d- \b} \sum_{n \ge 1} \int_{n r^2}^{(n+1) r^2}  n^{-\frac{ d+ \b}{2}} \int_{|y-x_0|<r}  | g(s+t,y)| \d y \d s \\
& \le c r^{2q-2-d- \b} \sum_{n \ge 1}   n^{-\frac{ d+ \b}{2}} = c r^{p - \b} .
\end{split}
\end{eqnarray}
Putting (\ref{2.50})-(\ref{3.41-1}) together, we obtain (\ref{2.52}).
\end{proof}

\vskip 0.3cm

 Now we can state the desired regularity estimate for the solution of the parabolic equation (\ref{2.13}).

\begin{prp} \label{P3.5}
Assume $|b|^2$ and $|f|^2$ satisfy Hypothesis  $\mathrm{H_p}$. Let $u\in C^{0,1}_b([0,T]\times \Rd)$ be the mild solution to the equation (\ref{2.13}). Then
$ \|D^2_xu\|^2 $ satisfies Hypothesis  $\mathrm{H_q}$ for some $q \in (d,p)$.
\end{prp}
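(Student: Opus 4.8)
The plan is to work locally. Write $g:=f-\<b,\nabla_xu\>$; by Proposition \ref{L3.3}, $\|\nabla_xu\|_\infty<\infty$, so $|g|^2\le c(|b|^2+|f|^2)$ satisfies Hypothesis $\mathrm{H_p}$, and $u$ is the mild solution of $\partial_tu+\tfrac12\Delta u=g$, $u(T,\cdot)=0$. Fix $(t_0,x_0)\in[0,T)\times\Rd$ and $r\in(0,1)$; a covering argument reduces the claim to proving, uniformly in $(t_0,x_0)$ and for some $q\in(d,p)$,
\[ \int_{t_0}^{(t_0+r^2)\wedge T}\int_{B(x_0,r)}\|D^2_xu\|^2\,\d y\,\d s\le M\,r^q,\qquad r\in(0,1). \]
I would split $g=g_1+g_2$ with $g_1:=g\,I_{B(x_0,2r)}$, and $u=u_1+u_2$ correspondingly, where $u_i$ is the mild solution of the heat equation with right-hand side $g_i$ and zero terminal value; the identity $u=u_1+u_2$ holds by uniqueness of mild solutions, and each $u_i\in W^{1,2}_{2,\mathrm{loc}}$ by Lemma \ref{L3.4} applied after a further cut-off.

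For the local part $u_1$ I would apply Lemma \ref{L3.4} (componentwise) with the ball $B(x_0,2r)$, admissible since $2r\in(0,2)$, and the interval $[t_0\wedge T,(t_0+r^2)\wedge T]$. Since $|g|^2$ satisfies $\mathrm{H_p}$ and the time window has length $\le r^2\le(2r)^2$, one has $\int_{t_0}^{(t_0+r^2)\wedge T}\int_{B(x_0,2r)}|g|^2\le M_1(2r)^p$, and Lemma \ref{L3.4} gives
\[ \int_{t_0}^{(t_0+r^2)\wedge T}\int_{\Rd}\|D^2_xu_1\|^2\,\d y\,\d s\le c\,(r^p+r^{d+2\a}+r^{p-\b}), \]
with $\a,\b\in(0,p-d)$ the constants supplied by that lemma.

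The essential part is $u_2$. Because $g_2\equiv 0$ on $[0,T]\times B(x_0,2r)$, after extending $b,f,g$ (hence $u_2$) by $0$ past $T$ the function $u_2$ is backward-caloric on $[0,\infty)\times B(x_0,2r)$ — the near-diagonal boundary term produced by differentiating $-\int_s^\infty\!\int q(\sigma-s,x,y)g_2(\sigma,y)\,\d y\,\d\sigma$ in $s$ vanishes, as $B(x_0,2r)$ is disjoint from $\operatorname{supp}g_2$ — hence $u_2\in C^\infty$ there. A direct interior estimate would give only a bound of order $r^{-2}\cdot r^{d+2}=r^{d}$, which is scaling-critical; the extra power is obtained by removing a linear function. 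Set $v_0:=\nabla_xu_2(t_0,x_0)$ and $W:=u_2-v_0\cdot(x-x_0)$, still caloric on $[0,\infty)\times B(x_0,2r)$ with $D^2_xW=D^2_xu_2$. On $Q:=[t_0,(t_0+4r^2)\wedge T]\times B(x_0,2r)$, writing $u_2=u-u_1$,
\[ |\nabla_xW(s,x)|\le|\nabla_xu(s,x)-\nabla_xu(t_0,x_0)|+|\nabla_xu_1(s,x)-\nabla_xu_1(t_0,x_0)|\le c\,r^{\gamma_1} \]
for some $\gamma_1>0$: the H\"older continuity of $\nabla_xu$ in $t$ is Proposition \ref{L3.3-1} (its hypothesis (\ref{3.15-1}) holds by the Remark following it, since $|b|^2,|f|^2$ satisfy $\mathrm{H_p}$), the H\"older continuity of $\nabla_xu$ in $x$ follows from the estimates in the proof of Proposition \ref{L3.3} together with a quantitative decay $\mathcal{N}^{1}_{h}(\delta)\le c\,\delta^\kappa$ ($\kappa>0$) valid when $|h|$ satisfies $\mathrm{H_p}$, and the same bounds hold for $u_1$ because $|g_1|^2$ satisfies $\mathrm{H_p}$ with the same constant. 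Each component of $\nabla_xW$ being caloric on $Q$, the interior (Caccioppoli) estimate gives
\[ \int_{t_0}^{(t_0+r^2)\wedge T}\int_{B(x_0,r)}\|D^2_xu_2\|^2\,\d y\,\d s\le c\,r^{-2}\!\int_Q|\nabla_xW|^2\le c\,r^{-2}\,r^{d+2}\,r^{2\gamma_1}=c\,r^{d+2\gamma_1} \]
(the case $t_0$ close to $T$ is easier, $\nabla_xu_2$ being itself $O(r^\kappa)$ there).

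Adding the two contributions,
\[ \int_{t_0}^{(t_0+r^2)\wedge T}\int_{B(x_0,r)}\|D^2_xu\|^2\,\d y\,\d s\le c\,(r^p+r^{d+2\a}+r^{p-\b}+r^{d+2\gamma_1}), \]
so $q:=\min\{d+2\a,\,p-\b,\,d+2\gamma_1\}\wedge\tfrac{d+p}{2}$ lies in $(d,p)$ and, all constants being independent of $(t_0,x_0)$, $\|D^2_xu\|^2$ satisfies $\mathrm{H_q}$. The step I expect to be hardest is the bound on $\nabla_xW$ on $Q$: that after removing the linear term $v_0\cdot(x-x_0)$ the gradient of $W$ is $O(r^{\gamma_1})$ and not merely $O(1)$. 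This is exactly where the Kato-class regularity theory of Sections 3--4 is used — H\"older continuity of $\nabla_xu$ in both variables and smallness of $\mathcal{N}^1$-type quantities over small balls — and it is the resulting gain $r^{2\gamma_1}$ that lifts the exponent strictly above the critical value $d$.
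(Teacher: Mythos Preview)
Your decomposition $u=u_1+u_2$ and the treatment of the local piece $u_1$ via Lemma \ref{L3.4} match the paper exactly. The genuine difference is in how the far piece $u_2$ (the paper's $\bar u_r$) is handled.

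The paper's argument for $\bar u_r$ is much more direct than yours: it never uses caloricity, Caccioppoli, or gradient H\"older continuity. Instead it works pointwise from the integral representation
\[
D^2_x\bar u_r(t,x)=\int_t^T\!\int_{B(x_0,2r)^c} D^2_x q(s-t,x,y)\,\bar F_r(s,y)\,\d y\,\d s,
\]
uses the elementary kernel bound $\|D^2_xq(\tau,x,y)\|\le c\,|x-y|^{\a-1}\tau^{-\frac{d+1+\a}{2}}e^{-\frac{|x-y|^2}{4\tau}}$, notes that $|x-y|\ge r$ on the domain of integration, pulls out $r^{\a-1}$, and bounds the remaining integral uniformly by Lemma \ref{L2.2} because $\bar F_r\in\K_{d,1-\a}$ (Proposition \ref{P2.3} and Lemma \ref{L2.4}). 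This yields the \emph{pointwise} estimate $\|D^2_x\bar u_r(t,x)\|\le c\,r^{\a-1}$ on $B(x_0,r)$, and integrating over $(t,t+r^2)\times B(x_0,r)$ gives $cr^{d+2\a}$ immediately.

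Your route---subtracting the linear part, invoking H\"older continuity of $\nabla_xu$ in both variables, and applying a Caccioppoli inequality---is a standard PDE approach and is correct in outline, but it buys nothing here and costs extra: you need $x$-H\"older continuity of $\nabla_xu$, which the paper never states or proves (you sketch how to extract it from the proof of Proposition \ref{L3.3} plus a quantitative $\mathcal N^1_h(\delta)\le c\delta^\kappa$, and this can indeed be done, but it is additional work). The paper's kernel argument sidesteps all of this by exploiting the explicit distance separation $|x-y|\ge r$ directly, which is the natural move when one already has the heat-kernel representation in hand.
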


\begin{proof}
For any $x_0 \in \Rd$ and $r \in (0,1)$, set $F(t,x):=f(t,x)-\< b(t,x),  \nabla_x u(t,x) \> $ and $F_r(t,x):=F(t,x)   I_{B(x_0,2r)}(x)$. Let $u_r$ be the mild solution to the following equation:
\begin{align} \nonumber
\left\{
\begin{aligned}
&\partial_t u_r (t,x)+\frac{1}{2}\Delta u_r (t,x)  =F_r(t,x),&&\forall (t,x) \in (0,T)\times \Rd, \\
& u_r (T,x) = 0 , && \forall x \in \Rd.
\end{aligned}
\right.
\end{align}
Then by Lemma \ref{L3.4} there exist constants $C>0$ and $q \in (d, p \wedge (d+2))$, which are independent of $x_0,r$, such that any $t \in [0,T]$,
\begin{eqnarray}  \label{2.37}
\begin{split}
\int_{t }^{t +r^2} \int_{B(x_0, r)} \|D^2_x   u_r (s,x)\|^2 \d x \d s & \le C (\int_{t }^{t +r^2} \int_{B(x_0,2r) } |  F_r (s,x)|^2 \d x\d s+r^q) \le C r^q.
\end{split}
\end{eqnarray}

Set $\bar F_r(t,x):=F(t,x)-F_r(t,x)$. Then $\bar u_r:=u-u_r$ is the solution of the following equation
\begin{align}
\left\{
\begin{aligned}
&\partial_t \bar u_r (t,x)+\frac{1}{2}\Delta \bar u_r (t,x)  =\bar F_r(t,x),&&\forall (t,x) \in (0,T)\times \Rd, \\
& \bar u_r (T,x) = 0 , && \forall x \in \Rd.
\end{aligned} \nonumber
\right.
\end{align}
Therefore,
\begin{eqnarray}  \nonumber
\begin{split}
\bar u_r(t,x)=\int_{t }^{T} \int_{\Rd} q(s-t,x,y) \bar F_r(s,y)\d y \d s.
\end{split}
\end{eqnarray}
By Proposition \ref{P2.3} and Lemma \ref{L2.4} it is easy to see that $\bar F_r \in \K_{d,1-\a}$ for $\a  = \frac{q-d}{2}$. Noting
$$\|D_x^2q(t,x,y)\|\le c |x-y|^{ \a-1}t^{-\frac{d+1+\a }{2}}e^{-\frac{|x-y|^2}{4t}},  $$
it follows that $\bar u_r (t,x)$ is twice differentiable with respect to $x$ for $x\in  B(x_0,r)$ and
\begin{eqnarray}  \nonumber
\begin{split}
&\ \ \ \   \|D^2_x  \bar  u_r (t,x)\|   \\
&\le c \int_{t }^{T} \int_{B(x_0,2r)^c} |x-y|^{\a-1 }(s-t)^{-\frac{d+1+\a }{2}}e^{-\frac{|x-y|^2}{4(s-t)}}\bar F_r(s,y)\d y \d s\\
&\le c r^{\a-1 } \int_{t }^{T} \int_{\Rd} (s-t)^{-\frac{d+1+\a }{2}}e^{-\frac{|x-y|^2}{4(s-t)}} \bar F_r(s,y)\d y \d s\le c r^{\a-1 },
\end{split}
\end{eqnarray}
where the last inequality follows from Lemma \ref{L2.2}. Thus
\begin{eqnarray}  \label{2.38}
\begin{split}
\int_{t }^{t +r^2} \int_{B(x_0, r)} \|D^2_x  \bar  u_r (s,x)\|^2 \d x \d s   \le c  r^{d+2\a} =cr^q.
\end{split}
\end{eqnarray}
Hence, by (\ref{2.37}) and (\ref{2.38}), we see that $ \|D^2_xu\|^2 $ satisfies Hypothesis  $\mathrm{H_q}$.
\end{proof}

\section{Existence and uniqueness of  strong solutions}

After the preparations, in this section, we are ready to  show that there exists a unique strong solution to the SDE (\ref{1.1}).
First of all, by \cite[Theorem 3]{Jin} and \cite[Theorem A]{ZhangQ} 
%\textcolor[rgb]{1.00,0.00,0.00}{ (note that although  \cite[Theorem 3]{Jin} is proved under the case $d \ge 3$, the arguments also work for $d=2$),}
 we have the following result on the weak solution of SDE (\ref{1.1}):
\begin{thm} \label{T2.1}
Assume $|b| \in \K_{d,1}$. Then for each $x \in \Rd$, there exists a unique weak solution to the following SDE:
\begin{eqnarray}  \label{5.1} \nonumber
\begin{split}
X_t=x+W_t+\int_0^tb(s,X_s)\d s,\ \forall t>0.
\end{split}
\end{eqnarray}
Moreover, the solution admits a continuous transition density $p(t,x,s,y)$ satisfying that for any $T>0$, there exist positive constants $M_6$ and $M_7$ such that for any $0< s-t\le T$ and $x,y \in \Rd$,
\begin{equation}\label{2.1}
p(t,x,s,y)\leq M_6(s-t)^{- \frac{d}{2}}e^{-\frac{M_7|x-y|^2}{2(s-t)}}.
\end{equation}
\end{thm}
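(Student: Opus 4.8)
\emph{Overall strategy.} The statement is the combination of the two quoted results, so the plan is to recall how each applies and why the hypothesis $|b|\in\K_{d,1}$ is exactly what makes the machinery run. The analytic half, \cite[Theorem A]{ZhangQ}, produces the fundamental solution of $\partial_t+\tfrac12\Delta+b\cdot\nabla_x$ with the Gaussian upper bound \eqref{2.1}; the probabilistic half, \cite[Theorem 3]{Jin}, converts this into weak well-posedness of \eqref{1.1}. Concretely, one constructs $p(t,x,s,y)$ by the parametrix (Levi--Duhamel) expansion around the Gaussian kernel $q$,
\[
p(t,x,s,y)=q(s-t,x,y)+\sum_{n\ge 1}\big(q*(\<b,\nabla q\>)^{*n}\big)(t,x,s,y),
\]
where $*$ denotes space--time convolution. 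Since $|\nabla_z q(r,z,y)|\le c\,r^{-1/2}\,r^{-d/2}e^{-c|z-y|^2/r}$, each convolution against $|b|$, after a Chapman--Kolmogorov reduction, contributes a multiplicative factor controlled by $\sup_{t,x}\int_0^{T}\!\int_{\Rd}r^{-(d+1)/2}e^{-\lambda|x-y|^2/(2r)}|b(t+r,y)|\,dy\,dr$, which is finite for every $T$ by Lemma \ref{L2.2} (index $\alpha=1$) and tends to $0$ as $T\downarrow0$ because $|b|\in\K_{d,1}$. Hence the series converges geometrically on a short interval and, by concatenation, on all of $(0,\infty)$; the limit is jointly continuous, nonnegative, satisfies Chapman--Kolmogorov and the forward/backward Kolmogorov equations, obeys \eqref{2.1}, and, by a standard conservativeness argument (using $\int_{\Rd}\<b(r,z),\nabla_z q(s-r,z,y)\>\,dy=0$, or a truncation of $b$), satisfies $\int_{\Rd}p(t,x,s,y)\,dy\equiv1$.

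\emph{Weak existence.} The family $P_{t,s}(x,\cdot):=p(t,x,s,y)\,dy$ is a time-inhomogeneous Markov transition function, so Kolmogorov's extension theorem yields probability measures $(\pp_x)_{x\in\Rd}$ on $C([0,\infty);\Rd)$. Differentiating the backward Kolmogorov equation shows that under $\pp_x$ the process $\phi(X_s)-\phi(x)-\int_0^s(\tfrac12\Delta+b(r,\cdot)\cdot\nabla_x)\phi(X_r)\,dr$ is a martingale for every $\phi\in C_c^\infty(\Rd)$, i.e.\ $\pp_x$ solves the martingale problem for $\tfrac12\Delta+b\cdot\nabla_x$ started at $x$; note $\int_0^t|b(r,X_r)|\,dr<\infty$ $\pp_x$-a.s.\ since $\ee_x\int_0^t|b(r,X_r)|\,dr=\int_0^t\!\int_{\Rd}|b(r,y)|p(0,x,r,y)\,dy\,dr<\infty$ by \eqref{2.1} and Lemma \ref{L2.2} (here $|b|\in\K_{d,1}\subset\K_{d,2}$). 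Consequently $X_t-x-\int_0^tb(r,X_r)\,dr$ is a continuous local martingale with bracket $t\,\mathrm{Id}$, so by L\'evy's characterisation, on a possibly enlarged space there is a Brownian motion $W$ with $X_t=x+W_t+\int_0^tb(r,X_r)\,dr$; this is a weak solution of \eqref{1.1} with transition density $p$.

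\emph{Uniqueness in law.} Let $\widetilde X$ be any weak solution. First one checks $\ee\int_0^t|b(r,\widetilde X_r)|\,dr<\infty$ (via a Krylov-type estimate for the mollified drifts $b_n$ and passage to the limit, or by quoting \cite[Theorem 3]{Jin}), so $\widetilde X$ solves the same martingale problem. Given bounded $f$, take the mild solution $u\in C^{0,1}_b$ of \eqref{2.13} with terminal datum $f$ on a short interval $[T_0,T]$ furnished by Proposition \ref{L3.3}; applying It\^o's formula to $u(s,\widetilde X_s)$ along $\widetilde X$ (justified by mollifying $b$ and $f$ and invoking the convergence in Proposition \ref{L3.3}) gives $\ee[f(\widetilde X_T)]=u(T_0,\widetilde X_{T_0})$, an expression depending only on $b,f$ and the law of $\widetilde X_{T_0}$. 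Iterating over consecutive short intervals pins down all one-dimensional marginals, and the (well-posed) martingale problem propagates this to the finite-dimensional distributions; hence the law of $\widetilde X$ is unique and its transition density must coincide with the $p$ built above, which satisfies \eqref{2.1}.

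\emph{Main obstacle.} The only substantial point is the parametrix construction of $p$ and the derivation of the Gaussian bound \eqref{2.1} under a Kato hypothesis on $|b|$ rather than on $|b|^2$; this is precisely the content of \cite[Theorem A]{ZhangQ} and \cite[Theorem 3]{Jin}, which we invoke rather than reprove. Everything else --- conservativeness of $p$, the equivalence between the martingale problem and the SDE \eqref{1.1}, and the propagation of uniqueness through the parabolic equation \eqref{2.13} --- is routine once $p$ is in hand.
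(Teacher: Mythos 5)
Your proposal is correct and follows essentially the same route as the paper, which offers no proof of this theorem beyond citing \cite[Theorem 3]{Jin} for weak well-posedness and \cite[Theorem A]{ZhangQ} for the Gaussian bound (\ref{2.1}), exactly the two results you invoke for the substantive content. The additional parametrix/martingale-problem sketch you supply is a reasonable outline of what those references contain, but it plays the same role as the paper's citation rather than constituting a different argument.
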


\vskip 0.3cm

To carry out Zvonkin transformation for SDEs (\ref{1.1}), we need the following  Krylov's type convergence.

\begin{lem}  \label{L3.6}
Assume $ |b| \in \K_{d,1}$. Let $X$ be a weak solution to the SDE (\ref{1.1}) wtth $X_0=x$. Then for any $h \in \K_{d,1}$ we have
\begin{eqnarray}  \nonumber
\begin{split}
%& E[\sup_{0\le t \le T}|\int_0^th(s,X_s)\d s|]\le M_h(T),\\
& \lim_{n\to \infty}E[\sup_{0\le t \le T}|\int_0^th(s,X_s)\d s-\int_0^th_n(s,X_s)\d s|^2]=0,
\end{split}
\end{eqnarray}
where $h_n(t,x):=\int_{\R^{d+1}} \varphi_n(t-s,x-y)h(s, y)\d y \d s $.
\end{lem}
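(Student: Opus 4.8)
The plan is to reduce the assertion to a first-moment estimate by way of a Khasminskii-type second-moment bound, and then to prove that first-moment estimate by splitting the resulting deterministic space--time integral. Since $t\mapsto\int_0^t|h(s,X_s)-h_n(s,X_s)|\,\d s$ is nondecreasing, we have $\sup_{0\le t\le T}|\int_0^t h(s,X_s)\,\d s-\int_0^t h_n(s,X_s)\,\d s|^2\le(\int_0^T|h-h_n|(s,X_s)\,\d s)^2$, so it suffices to show $E[(\int_0^T|h-h_n|(s,X_s)\,\d s)^2]\to0$. I would record first the following Khasminskii-type bound: if $g\ge0$ is Borel on $(0,\infty)\times\Rd$ and
\[
\mathcal A_g:=M_6\sup_{a\ge0,\,z\in\Rd}\int_0^T\int_{\Rd}u^{-\frac d2}e^{-\frac{M_7|z-y|^2}{2u}}g(a+u,y)\,\d y\,\d u<\infty ,
\]
then, using the pathwise identity $(\int_0^T g(s,X_s)\,\d s)^2=2\int_0^T g(s,X_s)\int_s^T g(r,X_r)\,\d r\,\d s$, Tonelli's theorem, the (time-inhomogeneous) Markov property of $X$ with transition density $p$ supplied by Theorem \ref{T2.1}, and the Gaussian bound $(\ref{2.1})$, one obtains
\[
E\Big[\Big(\int_0^T g(s,X_s)\,\d s\Big)^2\Big]=2E\Big[\int_0^T g(s,X_s)\int_s^T\int_{\Rd}p(s,X_s,r,y)g(r,y)\,\d y\,\d r\,\d s\Big]\le 2\mathcal A_g\,E\Big[\int_0^T g(s,X_s)\,\d s\Big],
\]
since $(\ref{2.1})$ bounds the inner double integral by $\mathcal A_g$, and likewise $E[\int_0^T g(s,X_s)\,\d s]=\int_0^T\int_{\Rd}p(0,x,s,y)g(s,y)\,\d y\,\d s\le\mathcal A_g$.

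I would then apply this with $g=|h-h_n|$. Because $u^{-d/2}\le T^{1/2}u^{-(d+1)/2}$ for $0<u\le T$ and $|h-h_n|\le|h|+|h_n|$, we get $\mathcal A_{|h-h_n|}\le M_6T^{1/2}(\mathcal N^{2M_7}_h(T)+\mathcal N^{2M_7}_{h_n}(T))$, and by Lemma \ref{L2.2} together with estimate $(\ref{2.19})$ of Lemma \ref{L3.2} this is at most $2M_6T^{1/2}\mathcal N^{2M_7}_h(T)<\infty$, uniformly in $n$. Thus the Khasminskii bound reduces everything to proving the first-moment convergence
\[
\lim_{n\to\infty}\int_0^T\int_{\Rd}p(0,x,s,y)\,|h-h_n|(s,y)\,\d y\,\d s=0 .
\]

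To prove this I would fix $\varepsilon>0$ and split $\int_0^T=\int_0^\delta+\int_\delta^T$ with $\delta\in(0,T)$ to be chosen. On $(0,\delta)$, bounding $p(0,x,s,y)\le M_6\delta^{1/2}s^{-(d+1)/2}e^{-M_7|x-y|^2/(2s)}$ and using $(\ref{2.19})$, the contribution is at most $2M_6\delta^{1/2}\mathcal N^{2M_7}_h(\delta)$, which by Definition \ref{D1.1} is $<\varepsilon$ for all $n$ once $\delta$ is small. On $[\delta,T]$, bounding $p(0,x,s,y)\le M_6\delta^{-d/2}e^{-M_7|x-y|^2/(2T)}$, I would split $\Rd=B(x,R)\cup B(x,R)^c$: on $B(x,R)^c$, extracting $e^{-M_7R^2/(4T)}$ and applying Lemma \ref{L2.2-1} to the measures $\int_\delta^T|h-h_n|(s,\cdot)\,\d s$ (whose masses on unit balls are bounded uniformly in $n$, again by $(\ref{2.19})$), the contribution is at most $C(\delta)\,e^{-M_7R^2/(4T)}$, which is $<\varepsilon$ for all $n$ once $R$ is large; on $B(x,R)$, with $\delta$ and $R$ now fixed, since $h\in\K_{d,1}\subset L^1_{\mathrm{loc}}((0,\infty)\times\Rd)$ (which follows from Definition \ref{D1.1} and Lemma \ref{L2.2}) and $h_n$ is the backward-in-time space--time mollification of $h$, we have $h_n\to h$ in $L^1([\delta,T]\times B(x,R))$, so the contribution is $<\varepsilon$ for $n$ large. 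Summing the three pieces gives the required convergence.

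I expect the last step to be the main obstacle: one has to absorb the time singularity of the heat kernel near $s=0$ into the Kato smallness \emph{uniformly in $n$} --- which is exactly what $(\ref{2.19})$ provides for the mollifications $h_n$ --- to control the spatial tails at infinity through the Gaussian decay and Lemma \ref{L2.2-1}, and only then to use the $L^1_{\mathrm{loc}}$ convergence of mollifications on the bounded region; this last part works because the starting point $x$ is fixed, so no supremum over base points is needed.
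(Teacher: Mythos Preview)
Your proof is correct and takes a genuinely different route from the paper's. The paper first proves only the weak convergence $E[\int_0^t h_n(s,X_s)\,\d s]\to E[\int_0^t h(s,X_s)\,\d s]$, then introduces the mild solutions $v_n,v$ of the heat equation with forcing $-h_n,-h$, applies It\^o's formula to $v_n(t,X_t)$, passes to the limit using the $C^{0,1}_b$ convergence $v_n\to v$ on compacts from Proposition~\ref{L3.3}, and finally subtracts the two It\^o representations and controls the difference via Burkholder's inequality and localization with $\tau_R$. Your argument bypasses all of this PDE machinery: the monotonicity reduction plus the Khasminskii identity $(\int_0^T g)^2=2\int_0^T g\int_s^T g$ and the Gaussian bound~(\ref{2.1}) immediately reduce the second moment to $2\mathcal A_g$ times the first moment, and the first moment is handled by a direct three-region splitting using (\ref{2.19}), Lemma~\ref{L2.2-1}, and $L^1_{\mathrm{loc}}$ convergence of mollifications. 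The payoff of your approach is a shorter, self-contained proof that does not rely on the $C^{0,1}$ compact convergence of Proposition~\ref{L3.3} or on any stochastic-integral estimates; the paper's approach, on the other hand, yields as a byproduct the It\^o-type formula~(\ref{2.46}) for $v(t,X_t)$, which is reused (with $f$ in place of $h$) in Proposition~\ref{P3.7}. One small remark: your bound on the unit-ball masses of $|h-h_n|$ via ``(\ref{2.19})'' is correct but indirect---for $s\in[\delta,T]$ one has $s^{-(d+1)/2}e^{-M_7|z-y|^2/(2s)}\ge T^{-(d+1)/2}e^{-M_7/(2\delta)}$ on $B(z,1)$, so the mass is dominated by a $\delta$-dependent multiple of $\mathcal N^{2M_7}_{h_n}(T)\le\mathcal N^{2M_7}_h(T)$.
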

\begin{proof}
We first show that for $t>0$,
\begin{eqnarray}  \label{2.43}
\begin{split}
\lim_{n \to \infty}E[\int_0^th_n(s,X_s)\d s]=E[\int_0^th(s,X_s)\d s].
\end{split}
\end{eqnarray}

For any $\e>0$, by Lemma \ref{L3.2} and (\ref{2.1}), there exists $0<\delta<t$ such that
\begin{eqnarray}  \label{4.3-1}
\begin{split}
\sup_{n \ge 1} \int_0^{\delta} \int_{\Rd} p(0,x,s,y) (|h_n(s,y)|+|h (s,y)| )  \d y \d s \le \e.
\end{split}
\end{eqnarray}
Also by Lemma \ref{L2.2-1} and (\ref{2.1}) one can choose $R>0$ large enough so that
\begin{eqnarray}  \label{4.4-1}
\begin{split}
& \ \ \ \   \sup_{|\tau|\vee |z|< \frac{\delta}{2}} \int_{\delta - \tau}^{t- \tau} \int_{|x-y|\ge R}  (p(0,x,s+\tau,y+z) +p(0,x,s,y)) | h (s,y) | \d s \d y  \\
&\le c \delta^{-\frac{d}{2}} \sup_{x \in \Rd} \int_{\frac{\delta}{2}}^{t+\frac{\delta}{2} } \int_{|x-y|\ge R-\frac{\delta}{2}} e^{-\frac{M_7|x-y|^2}{4t}}  |h (s,y)|   \d y  \d s\\
& \le c \delta^{-\frac{d}{2}} e^{-\frac{M_7 (R-\frac{\delta}{2})^2}{8t}} \sup_{  x \in \Rd}  \int_{\frac{\delta}{2}}^{t+\frac{\delta}{2} } \int_{\Rd}e^{-\frac{M_7|x-y|^2}{8t}}  |h (s,y)|   \d y  \d s \le \e.
\end{split}
\end{eqnarray}
% Also following from the continuity of $p(0,x,\cdot,\cdot)$ on $(0,\infty)\times \Rd$, there exists a positive integer $N$ such that $2^{-N}<\frac{\delta}{2}$ and
%\begin{eqnarray}
%\begin{split}
%|p(0,x,s_1,y_1)-p(0,x,s_2,y_2)|\le \e.
%\end{split}
%\end{eqnarray}
%for any $(s_1,y_1),(s_2,y_2)\in [\frac{\delta}{2},t+\frac{\delta}{2}]\times B(0,R)$ with $|s_1-s_2| \vee |y_1-y_2| \le 2^{-N}$.
Take a positive integer $N$ such that $2^{-N}<\frac{\delta}{2}$. It follows from (\ref{4.3-1}), (\ref{4.4-1}) and (\ref{2.1}) that for $n \ge N$ and $R>0$,
\begin{eqnarray}  \nonumber
\begin{split}
&\ \ \ \ |E[\int_0^t(h_n(s,X_s)-h(s,X_s))\d s]|\\
&\le \sup_{n \ge 1} \int_0^{\delta} \int_{\Rd} p(0,x,s,y) (|h_n(s,y)|+|h (s,y)| ) \d s \d y \\
&\ \ \ \ +|\int_{\R^{d+1}} \varphi_n(\tau,z) \d \tau \d z  \int_{\delta - \tau}^{t- \tau} \int_{\Rd} (p(0,x,s+\tau,y+z)-p(0,x,s,y)) h (s,y)  \d s \d y |\\
&\ \ \ \ + \int_{\R^{d+1}} \varphi_n(\tau,z) \d \tau \d z  \int_{\delta - \tau}^{ \delta} \int_{\Rd}  p(0,x,s,y)  | h (s,y) | \d s \d y \\
&\ \ \ \ + \int_{\R^{d+1}} \varphi_n(\tau,z) \d \tau \d z  \int_{t - \tau }^{t } \int_{\Rd}  p(0,x,s,y)  | h (s,y) | \d s \d y \\
&\le \sup_{n \ge 1} \int_0^{\delta} \int_{\Rd} p(0,x,s,y) (|h_n(s,y)|+|h (s,y)| ) \d s \d y \\
&\ \ \ \ +  \sup_{|\tau|\vee |z|< \frac{\delta}{2}} \int_{\delta - \tau }^{t- \tau } \int_{|x-y|\ge R}  (p(0,x,s+\tau,y+z) +p(0,x,s,y)) | h (s,y) | \d s \d y   \\
&\ \ \ \ +|\int_{\R^{d+1}} \varphi_n(\tau,z) \d \tau \d z  \int_{\delta- \tau }^{t- \tau } \int_{B(x,R)} (p(0,x,s+\tau,y+z)-p(0,x,s,y)) h (s,y)  \d s \d y |\\
&\ \ \ \ +  \int_{0}^{ \delta} \int_{\Rd}  p(0,x,s,y)  | h (s,y) | \d s \d y +  \int_{t - 2^{-n} }^{t } \int_{\Rd}  p(0,x,s,y)  | h (s,y) | \d s \d y \\
&\le 3 \e +\sup_{ |\tau|\vee |z|< 2^{-n} , (s,y)\in (\frac{\delta}{2},t)\times B(x,R) }  |p(0,x,s+\tau,y+z)-p(0,x,s,y)|    \int_{0}^t\int_{B(x,R)}|h(s,y)| \d y \d s \\
 &\ \ \ \ +  \int_{t - 2^{-n} }^{t } \int_{\Rd}  p(0,x,s,y))  | h (s,y) | \d s \d y. \\
\end{split}
\end{eqnarray}
 Letting $n \to \infty$, by the continuity of $p(0,x,\cdot,\cdot)$ on $(0,\infty)\times \Rd$, we get
\begin{eqnarray}  \nonumber
\begin{split}
\lim_{n \to \infty} |E[\int_0^t(h_n(s,X_s)-h(s,X_s))\d s]| \le 3 \e  .
\end{split}
\end{eqnarray}
Since $\e$ is arbitrary, we obtain (\ref{2.43}).

\vskip 0.3cm

 Now let  $v_n(t,x):=\int_t^T\int_{\Rd} q(s-t,x,y) h_n(s,y) \d y \d s$ and $v(t,x):=\int_t^T\int_{\Rd} q(s-t,x,y) h(s,y) \d y \d s$. Then $v_n$ and $v$ are the solutions of the following equations:
\begin{align}  \nonumber
\left\{
\begin{aligned}
&\partial_t v_n (t,x)+\frac{1}{2}\Delta v_n(t,x) =-h_n(t,x),&&\forall (t,x) \in (0,T)\times \Rd, \\
&  v_n(T,x) = 0, && \forall x \in \Rd.
\end{aligned}
\right.
\end{align}
\begin{align}  \nonumber
\left\{
\begin{aligned}
&\partial_t v (t,x)+\frac{1}{2}\Delta v(t,x)=-h(t,x),&&\forall (t,x) \in (0,T)\times \Rd, \\
& v(T,x) = 0, && \forall x \in \Rd.
\end{aligned}
\right.
\end{align}
Applying Ito's formula we obtain
\begin{eqnarray}  \label{2.45}
\begin{split}
v_n (t,X_t)=&v_n (0,x)+\int_0^t \<\nabla_xv_n (s,X_s), \d W_s \>\\
&+\int_0^t \<\nabla_xv_n (s,X_s),  b(s,X_s)\> \d s-\int_0^t h_n(s,X_s) \d s.
\end{split}
\end{eqnarray}
Combining Proposition \ref{L3.3} and (\ref{2.43}), letting $n \to \infty$ in (\ref{2.45}) we get
\begin{eqnarray}  \label{2.46}
\begin{split}
v  (t,X_t)=&v  (0,x)+\int_0^t \<\nabla_xv  (s,X_s), \d W_s \>\\
&+\int_0^t \<\nabla_xv  (s,X_s),  b(s,X_s)\> \d s-\int_0^t h (s,X_s) \d s.
\end{split}
\end{eqnarray}
Using the Gaussian upper bound estimate of the density function of $X$ and $|b| \in \K_{d,1}$, one can easily see that
\begin{eqnarray}  \label{5.7}
\begin{split}
E[(\int_0^T|b(s,X_s)| \d s)^2]< \infty. \\
\end{split}
\end{eqnarray}
Set $\tau_R:=\inf\{t \ge 0: |X_t|\ge R\}$ for $R>0$. Then by Proposition \ref{L3.3} , (\ref{2.45})-(\ref{5.7}) and Burkholder's inequality we have
\begin{eqnarray}  \nonumber
\begin{split}
&\ \ \ \  \lim_{n\to \infty}E[\sup_{0\le t \le T}|\int_0^th(s,X_s)\d s-\int_0^th_n(s,X_s)\d s|^2]\\
&\le 2  \lim_{n\to \infty}E[\sup_{0\le t \le T}|v(t,X_t)- v_n(t,X_t)|^2|]+4\lim_{n\to \infty}E[  \int_0^T |\nabla_xv(s,X_s) - \nabla_xv_n(s,X_s)|^2\d s]\\
&\ \ \ \ + \lim_{n\to \infty}E[(\int_0^t |\nabla_x v  (s,X_s)- \nabla_x v_n  (s,X_s)| |b(s,X_s)| \d s)^2] \\
&\le c    (P[ T>\tau_R] + \lim_{n \to \infty}     \|v -v_n \|_{C_b^{0,1}( [0,T] \times B(0,R))})=c P[ T>\tau_R].
\end{split}
\end{eqnarray}
Let $R \to \infty$ to obtain
\begin{eqnarray}  \nonumber
\begin{split}
\lim_{n\to \infty}E[\sup_{0\le t \le T}|\int_0^th(s,X_s)\d s-\int_0^th_n(s,X_s)\d s|^2]=0.
\end{split}
\end{eqnarray}
\end{proof}

\vskip 0.4cm

Next result is the  Zvonkin transform of SDEs (\ref{1.1}).

\begin{prp}  \label{P3.7}
Assume $|b|, |f| \in \K_{d,1}$. Let $X$ be a solution to the SDE (\ref{1.1}) and $u$ the mild solution to the equation (\ref{2.13}). Then we have for any $0 <t \le T$,
\begin{eqnarray}  \label{2.44}
\begin{split}
u (t,X_t)=&u (0,x)+\int_0^t \<\nabla_xu (s,X_s), \d W_s \>+\int_0^t f (s,X_s) \d s.
\end{split}
\end{eqnarray}
\end{prp}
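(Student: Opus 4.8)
\noindent The plan is to obtain (\ref{2.44}) by passing to the limit in the classical It\^o formula applied to the smooth approximations $u_n$ solving (\ref{2.17}). Since $u_n\in C^{1,2}([0,T]\times\Rd)$ with $\nabla_xu_n$ bounded, and since $E\big[(\int_0^T|b(s,X_s)|\,\d s)^2\big]<\infty$ by (\ref{5.7}), It\^o's formula gives, for $0<t\le T$,
\begin{eqnarray*}
u_n(t,X_t) &=& u_n(0,x)+\int_0^t\<\nabla_xu_n(s,X_s),\d W_s\> \\
& & +\int_0^t\Big(\partial_tu_n+\frac{1}{2}\Delta u_n+\<b,\nabla_xu_n\>\Big)(s,X_s)\,\d s.
\end{eqnarray*}
Substituting the identity $\partial_tu_n+\frac{1}{2}\Delta u_n=f_n-\<b_n,\nabla_xu_n\>$ from (\ref{2.17}) turns the drift into $\int_0^tf_n(s,X_s)\,\d s+\int_0^t\<\nabla_xu_n(s,X_s),(b-b_n)(s,X_s)\>\,\d s$, so it remains to let $n\to\infty$ in the resulting identity.

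For the passage to the limit I would localise with $\tau_R:=\inf\{t\ge0:|X_t|\ge R\}$ and first establish (\ref{2.44}) with $t$ replaced by $t\wedge\tau_R$. On $[0,t\wedge\tau_R]$ the path of $X$ stays in $B(0,R)$, so $u_n(t\wedge\tau_R,X_{t\wedge\tau_R})\to u(t\wedge\tau_R,X_{t\wedge\tau_R})$ and $u_n(0,x)\to u(0,x)$ by the locally uniform convergence (\ref{2.12}); the stopped stochastic integrals converge in $L^2$ since, by the It\^o isometry and (\ref{2.12}),
\[
E\Big[\int_0^{t\wedge\tau_R}|\nabla_xu_n-\nabla_xu|^2(s,X_s)\,\d s\Big]\le T\,\|u_n-u\|_{C_b^{0,1}([0,T]\times B(0,R))}^2\to0,
\]
and $\int_0^{t\wedge\tau_R}f_n(s,X_s)\,\d s\to\int_0^{t\wedge\tau_R}f(s,X_s)\,\d s$ in $L^2$ by Lemma \ref{L3.6} (applied componentwise with $h=f$). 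Finally, since $\tau_R\uparrow\infty$ $P$-a.s.\ by the continuity of the paths, letting $R\to\infty$ recovers (\ref{2.44}) for $t$ itself.

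The step requiring the most care is showing that the error term $\int_0^{t\wedge\tau_R}\<\nabla_xu_n(s,X_s),(b-b_n)(s,X_s)\>\,\d s$ tends to $0$; this is where the Kato-class machinery is essential. Taking expectations and using $\sup_n\|u_n\|_{C_b^{0,1}([0,T]\times\Rd)}<\infty$ from (\ref{2.12-2}), it suffices to show $E\big[\int_0^t|b(s,X_s)-b_n(s,X_s)|\,\d s\big]\to0$. By the Gaussian bound (\ref{2.1}) this is dominated by $\int_0^t\!\int_{\Rd}s^{-d/2}e^{-M_7|x-y|^2/(2s)}|(b-b_n)(s,y)|\,\d y\,\d s$, which one controls as in the proof of Lemma \ref{L3.6}: the small-time part over $[0,\delta]$ is uniformly small in $n$ because the mollification estimate (\ref{2.19}) keeps the Kato seminorm of $|b_n|$ (hence of $|b-b_n|\le|b|+|b_n|$) bounded; the part over $\{|x-y|\ge R\}$ is small for $R$ large, uniformly in $n$, by Lemma \ref{L2.2-1}; and on the remaining bounded region $[\delta,t]\times B(x,R)$ the kernel is bounded and $b_n\to b$ in $L^1_{loc}$. (Equivalently: by (\ref{2.1}) the occupation measure of $X$ is absolutely continuous, so $b_n(s,X_s)\to b(s,X_s)$ for $\d s\otimes\d P$-a.e.\ $(s,\omega)$, and one concludes by the generalised dominated convergence theorem, using $|b_n|\le(|b|)_n$ and Lemma \ref{L3.6} applied to $|b|\in\K_{d,1}$.) I expect this term to be the main obstacle; everything else is a matter of assembling Proposition \ref{L3.3}, Lemma \ref{L3.6} and the localisation above.
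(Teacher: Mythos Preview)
Your proof is correct and follows the paper's strategy of applying It\^o's formula to the smooth approximations $u_n$ and passing to the limit; the only substantive difference lies in how the cross term $\int_0^t\langle\nabla_xu_n(s,X_s),(b-b_n)(s,X_s)\rangle\,\d s$ is dispatched. You bound it by $\sup_n\|\nabla_xu_n\|_\infty\cdot E\big[\int_0^t|b-b_n|(s,X_s)\,\d s\big]$ and drive the latter to zero via the small-time/far-field/near-field splitting of the transition density---in effect reproving a variant of (\ref{2.43}) with $|b-b_n|$ in place of $h_n-h$. The paper instead first swaps $\nabla_xu_n$ for $\nabla_xu$ (the difference being controlled by Proposition~\ref{L3.3} and (\ref{5.7})), then extracts from Lemma~\ref{L3.6} a subsequence along which $t\mapsto\int_0^tb_{n_k}(s,X_s)\,\d s$ converges $P$-a.s.\ uniformly in $t$, and finishes by a Riemann--Stieltjes argument using the continuity of $s\mapsto\nabla_xu(s,X_s)$ and the uniformly bounded total variation of the integrators. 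Your route is more self-contained and avoids the subsequence extraction; the paper's route recycles Lemma~\ref{L3.6} as a black box and exploits the regularity of the \emph{limit} $u$ rather than $L^1$ convergence of $b_n$ along the path.
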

\begin{proof}
%Now we are going to show (\ref{2.44}).
Let $u_n$ be the classical solution to equation (\ref{2.17}). Then by the Ito's formula,
\begin{eqnarray}  \label{2.47}
\begin{split}
u_n(t,X_t)=&u_n(0,x)+\int_0^t \<\nabla_xu_n(s,X_s), \d W_s \>\\
&+\int_0^t \<\nabla_xu_n(s,X_s), b(s,X_s)-b_n(s,X_s)\> \d s+\int_0^t f_n(s,X_s) \d s.
\end{split}
\end{eqnarray}
By Lemma \ref{L3.6}, there exists a subsequence $\{n_k\}_{k \ge 1}$ such that for $P$-$a.e. \omega \in \Omega$ and $t \in [0,T]$,
\begin{eqnarray}  \nonumber
\begin{split}
\lim_{k \to \infty}\int_0^t   b_{n_k}(s,X_s(\omega))  \d s=\int_0^t b(s,X_s(\omega)) \d s.
\end{split}
\end{eqnarray}
Since $\nabla_xu (s,X_s)$ is continuous with respect to $s$, it follows that
\begin{eqnarray}  \nonumber
\begin{split}
\lim_{k \to \infty}\int_0^t \<\nabla_xu (s,X_s), b_{n_k}(s,X_s)-b(s,X_s)\> \d s=0.
\end{split}
\end{eqnarray}
Combining this with Proposition \ref{L3.3} and Lemma \ref{L3.6}, letting $n \to \infty$ in (\ref{2.47}) we get (\ref{2.44}).
\end{proof}

\vskip 0.4cm

Now we are ready to state the main result:

\begin{thm}  \label{T4.3}
Assume for any $T>0$, $|b(t,x)|^2 I_{(0,T)}(t) \in \K_{d,\a}$ for some $\a \in (0,2)$. Then for each $x \in \Rd$, there exists a unique strong solution to SDE (\ref{1.1}).
\end{thm}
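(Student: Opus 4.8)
The plan is to obtain \emph{pathwise uniqueness} for (\ref{1.1}) via the Zvonkin transformation and then to deduce the existence and uniqueness of a strong solution from the Yamada--Watanabe theorem, since weak existence and uniqueness is already provided by Theorem \ref{T2.1}. First I would unpack the hypothesis: writing $g:=|b|I_{(0,T)}$ we have $|g|^2=|b|^2I_{(0,T)}\in\K_{d,\a}$, so Lemma \ref{L2.4} gives $g\in\K_{d,1-\b}\subset\K_{d,1}$ for some small $\b>0$; thus for every $T>0$ the drift, cut off at time $T$, lies in $\K_{d,1}$, and by Corollary \ref{C2.3} the function $|b|^2I_{(0,T)}$ satisfies Hypothesis $\mathrm{H_p}$ for some $p>d$. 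Hence all the results of Sections 3--5 are available on each finite time interval. Since the length $T$ produced by Proposition \ref{L3.3} depends only on $\mathcal{N}^{1}_b(\cdot)$ (and, because the relevant supremum is taken over all starting times $t\ge0$, can be chosen uniformly for subintervals of a fixed $[0,T_*]$), it suffices to prove pathwise uniqueness on $[0,T]$ for this small $T$ and then concatenate over $[0,T],[T,2T],\dots$; from now on I fix such a $T$.

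Let $u\in C_b^{0,1}([0,T]\times \Rd)$ be the mild $\Rd$-valued solution of (\ref{2.13}) with $f:=-b$, furnished by Proposition \ref{L3.3} after shrinking $T$ so that (\ref{2.16}) holds. Then $\sup_{[0,T]\times\Rd}|\nabla_x u|\le\frac{1}{2}$ by (\ref{2.18}), so $\Phi(t,x):=x+u(t,x)$ is, for each fixed $t$, a bi-Lipschitz homeomorphism of $\Rd$ with $\frac{1}{2}|x-y|\le|\Phi(t,x)-\Phi(t,y)|\le\frac{3}{2}|x-y|$ and with $2$-Lipschitz inverse. If $X$ solves (\ref{1.1}), then substituting (\ref{1.1}) into the Zvonkin identity (\ref{2.44}) (Proposition \ref{P3.7}, with $f=-b$) and using $b+f=0$, one finds that $Y_t:=\Phi(t,X_t)$ is a driftless It\^o process, $dY_t=\tilde\sigma(t,Y_t)\,dW_t$, where $\tilde\sigma(t,y):=I+\nabla_x u\bigl(t,\Phi(t,\cdot)^{-1}(y)\bigr)$. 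The crucial input is that, although $\tilde\sigma$ is only continuous, Proposition \ref{P3.5} guarantees that $\|D^2_x u\|^2$ satisfies Hypothesis $\mathrm{H_q}$ for some $q\in(d,p)$; therefore, by Lemma \ref{L2.5} and Corollary \ref{C2.3}, for every $R>0$ the function $\bigl(M_R|D^2_xu|\bigr)^2$ again belongs to a Kato class $\K_{d,\gamma}$ with $\gamma>0$.

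Now take two solutions $X^1,X^2$ of (\ref{1.1}) driven by the same $W$ with $X^1_0=X^2_0=x$, and put $\xi_t:=\Phi(t,X^1_t)-\Phi(t,X^2_t)$, so that $|X^1_t-X^2_t|\le 2|\xi_t|$. Applying Lemma \ref{L2.6} to the components of $\nabla_x u$, together with the Lipschitz bound for $\Phi(t,\cdot)^{-1}$, yields for a.e.\ $s$ below the stopping time $\sigma_R:=\inf\{t:|X^1_t-X^2_t|\ge R\}$, almost surely (the exceptional $\d t\,\d x$-null set from Lemma \ref{L2.6} is a.s.\ not charged by $s\mapsto(s,X^i_s)$, since $X^i_s$ has a density by Theorem \ref{T2.1}), the pointwise bound $\|\tilde\sigma(s,Y^1_s)-\tilde\sigma(s,Y^2_s)\|^2\le c\,|\xi_s|^2\,\zeta_s$ with $\zeta_s:=\bigl(M_R|D^2_xu|(s,X^1_s)+M_R|D^2_xu|(s,X^2_s)\bigr)^2$. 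By the Gaussian upper bound (\ref{2.1}) and the Kato class membership of $\bigl(M_R|D^2_xu|\bigr)^2$ one has $E[\int_0^T\zeta_s\,\d s]<\infty$ (a Krylov/Khasminskii-type estimate, as in the proof of Lemma \ref{L3.6}). Applying It\^o's formula to $|\xi_t|^2\exp\bigl(-c\int_0^t\zeta_s\,\d s\bigr)$ and stopping at $\sigma_R\wedge\tau_n$, with $\tau_n:=\inf\{t:|X^1_t|\vee|X^2_t|\ge n\}$: the finite-variation part is nonpositive by the displayed bound, and the stochastic integral is a genuine martingale since its bracket is integrable (its integrand is bounded by a constant times $\zeta_s$ on $[0,\tau_n]$). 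Hence the nonnegative quantity $|\xi_{t\wedge\sigma_R\wedge\tau_n}|^2\exp\bigl(-c\int_0^{t\wedge\sigma_R\wedge\tau_n}\zeta_s\,\d s\bigr)$ has nonpositive expectation, so $\xi\equiv0$ on $[0,\sigma_R\wedge\tau_n]$; letting $n\to\infty$ and observing that $\xi\equiv0$ is incompatible with $\sigma_R<\infty$, we get $X^1\equiv X^2$ on $[0,T]$. Concatenating over consecutive intervals gives pathwise uniqueness on $[0,\infty)$, and the Yamada--Watanabe theorem, together with the weak existence in Theorem \ref{T2.1}, produces the unique strong solution.

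The main obstacle is precisely this pathwise-uniqueness argument: the standard Lipschitz (or $W^{1,q}$ with $q>d+2$) control of the transformed diffusion coefficient is unavailable here, and must be replaced by the pointwise estimate through the local maximal function of $D^2_x u$ (Lemma \ref{L2.6}), which is only useful because of the nontrivial fact that this maximal function, squared, is \emph{still} in a Kato class (Lemma \ref{L2.5} applied to the regularity output of Proposition \ref{P3.5}). Making the ensuing stochastic calculus rigorous --- verifying integrability of the correction term $\int_0^T\zeta_s\,\d s$ under the laws of both solutions via the Gaussian bound (\ref{2.1}), and handling the Lebesgue-null exceptional set in Lemma \ref{L2.6} --- is where the technical care concentrates.
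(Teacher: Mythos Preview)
Your proposal is correct and follows essentially the same route as the paper: reduce to pathwise uniqueness via Yamada--Watanabe and Theorem \ref{T2.1}, apply the Zvonkin transform (Proposition \ref{P3.7}) so that the difference of two transformed solutions is a driftless local martingale, bound its quadratic variation through Lemma \ref{L2.6} and the Kato-class membership of $(M_R\|D^2_xu\|)^2$ coming from Proposition \ref{P3.5} combined with Lemma \ref{L2.5}, and close with a Gronwall-type argument on a small interval $[0,T]$ before concatenating. The only differences are cosmetic: the paper chooses $f=b_{T_0}$ and $v(t,x)=x-u(t,x)$ in place of your $f=-b$, $\Phi=x+u$, and it invokes Scheutzow's stochastic Gronwall lemma \cite{Scheutzow} on the inequality (\ref{2.48-1}) where you instead carry out the equivalent exponential-weight supermartingale computation $|\xi_t|^2\exp\bigl(-c\int_0^t\zeta_s\,\d s\bigr)$ by hand.
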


\begin{proof}
 By the Yamada-Watanabe theorem and Theorem \ref{T2.1}, it suffices  to prove the pathwise uniqueness of the solution to SDE (\ref{1.1}).
Assume $X$ and $Y$ are solutions to the SDE (\ref{1.1}) with $X_0=Y_0=x$. For any given $T_0>0$, set $b_{T_0}(t,x):=b(t,x) I_{t<T_0}$. Then by Corollary \ref{C2.3} and Lemma \ref{L2.4}, $|b_{T_0}|^2$ satisfies Hypothesis $\mathrm{H_p}$ and $|b_{T_0}| \in \K_{d,1}$. Let $u$ be the  unique mild solution to the following parabolic equation:
\begin{align}  \label{2.11}
\left\{
\begin{aligned}
&\partial_t   u (t,x)+\frac{1}{2}\Delta   u(t,x)+\< b_{T_0}(t,x),  \nabla_x   u(t,x) \>   =b_{T_0},&&\forall (t,x) \in (0,T)\times \Rd, \\
&   u(T,x) = 0, && \forall x \in \Rd.
\end{aligned} \nonumber
\right.
\end{align}
Letting $v(t,x):=x-u(t,x)$, by Proposition \ref{L3.3},   there exist $0<c_1<c_2$ and constant $T>0$ such that for $t \in [0,T]$ and $x,y \in \Rd$,
\begin{eqnarray}  \label{2.48}
\begin{split}
c_1|x-y| \le |v(t,x)-v(t,y)| \le c_2|x-y|.
\end{split}
\end{eqnarray}
Applying Proposition \ref{P3.7}, we obtain
\begin{eqnarray}  \nonumber
\begin{split}
v (t,X_t)-v(t,Y_t)= \int_0^t \<\nabla_xu (s,X_s)-\nabla_xu (s,Y_s), \d W_s \>.
\end{split}
\end{eqnarray}
Let $\tau_R:=\inf \{t\le T\wedge T_0: |X_t-x|\vee |Y_t-x|\ge \frac{R}{2}\}$. Then by Lemma \ref{L2.6} and (\ref{2.48}), for any $t \in [0, \tau_R ]$,
\begin{eqnarray}  \label{2.48-1}
\begin{split}
&\ \ \ \  |X_t -Y_t|^2 \\
&\le c  |v (t,X_t)-v(t,Y_t)|^2 \\
&\le c \int_0^t (v (s,X_s)-v(s,Y_s))\<\nabla_xu (s,X_s)-\nabla_xu (s,Y_s), \d W_s \>\\
&\ \ \ \ + c \int_0^{ t} \|\nabla_xu (s,X_s)-\nabla_xu (s,Y_s) \|^2 \d  s \\
&\le c \int_0^t (v (s,X_s)-v(s,Y_s))\<\nabla_xu (s,X_s)-\nabla_xu (s,Y_s), \d W_s \>\\
&\ \ \ \ +c\int_0^{ t} |X_s-Y_s|^2(|M_R \|D^2_x u\| (s,X_s)|^2+|M_R \|D^2_x u\| (s,Y_s) |^2) \d  s .
\end{split}
\end{eqnarray}
By Lemma \ref{L2.5} and Proposition \ref{P3.5}, we know that $|M
_R\|D^2_x u\||^2$ satisfies the Hypothesis  $\mathrm{H_q}$ for some $q<p$. In particular,  $|M_R\|D^2_x u\||^2 \in \K_{d,2}$ by Corollary \ref{C2.3}. Using the upper bound of the density function $p(t,x,s,y)$ in (\ref{2.1}), we see that for any $t>0$,
\begin{eqnarray}  \nonumber
\begin{split}
E[\int_0^{t}  (|M_R \|D^2_x u\| (s,X_s)|^2+|M_R \|D^2_x u\| (s,Y_s) |^2) \d  s  ]<\infty.
\end{split}
\end{eqnarray}
Applying the stochastic Gronwall inequality (see $e.g.$ \cite[Theorem 4]{Scheutzow}), we deduce from (\ref{2.48-1}) that  for any $R>0$,
\begin{eqnarray}  \nonumber
\begin{split}
E[\sup_{t \le \tau_R } |X_t-Y_t|^2]=0.
\end{split}
\end{eqnarray}
Letting $R\rightarrow \infty$, we see that
\begin{eqnarray}  \nonumber
\begin{split}
E[\sup_{t \le T \wedge T_0} |X_t-Y_t|^2]=0.
\end{split}
\end{eqnarray}
Since the constant $T$ is independent of the initial value  $x$, using standard arguments, we conclude  that $X_t=Y_t$ for all $t \in [0,T_0]$ $P$-$a.e.$. Since $T_0$ is arbitrary, we have proven  the pathwise uniqueness.
\end{proof}

\medskip

\noindent{\bf  Acknowledgement.}\   This work is partly
supported by National Natural Science Foundation of China (NSFC) (No. 11971456, No. 11671372, No. 11721101).


\begin{thebibliography}{10}

\bibitem{BassChen}
R.~F. Bass and Z.-Q. Chen.
\newblock Brownian motion with singular drift.
\newblock {\em Ann. Probab.}, 31(2):791--817, 2003.

\bibitem{Beck}
L.~Beck, F.~Flandoli, M.~Gubinelli, and M.~Maurelli.
\newblock Stochastic {ODE}s and stochastic linear {PDE}s with critical drift:
  regularity, duality and uniqueness.
\newblock {\em Electron. J. Probab.}, 24:Paper No. 136, 72, 2019.

\bibitem{Wang}
S.~Yang, C.~Wang, and T.S.~Zhang.
\newblock Elliptic equations associated with Brownian motion with singular drift
  drift. Commun. Math. Stat., 2020, preprint. Doi: 10.1007/s40304-020-00213-8.

\bibitem{Friedman1}
A.~Friedman.
\newblock {\em Stochastic differential equations and applications. {V}ol. 2}.
\newblock Academic Press [Harcourt Brace Jovanovich, Publishers], New
  York-London, 1976.
\newblock Probability and Mathematical Statistics, Vol. 28.

\bibitem{Gyongy}
I.~Gy\"{o}ngy and T.~Mart\'{\i}nez.
\newblock On stochastic differential equations with locally unbounded drift.
\newblock {\em Czechoslovak Math. J.}, 51(126)(4):763--783, 2001.

\bibitem{Jin}
P.~Jin.
\newblock Brownian motion with singular time-dependent drift.
\newblock {\em J. Theoret. Probab.}, 30(4):1499--1538, 2017.

\bibitem{KimSong}
P.~Kim and R.~Song.
\newblock Two-sided estimates on the density of {B}rownian motion with singular
  drift.
\newblock {\em Illinois J. Math.}, 50(1-4):635--688, 2006.

\bibitem{Kinzebulatov}
D.~Kinzebulatov and Y.~A. Sem\"{e}nov.
\newblock Brownian motion with general drift.
\newblock {\em Stochastic Process. Appl.}, 130(5):2737--2750, 2020.

\bibitem{Krylov2}
N.~V. Krylov.
\newblock Estimates of the maximum of the solution of a parabolic equation and
  estimates of the distribution of a semimartingale.
\newblock {\em Mat. Sb. (N.S.)}, 130(172)(2):207--221, 284, 1986.

\bibitem{Krylov}
N.~V. Krylov and M.~R\"{o}ckner.
\newblock Strong solutions of stochastic equations with singular time dependent
  drift.
\newblock {\em Probab. Theory Related Fields}, 131(2):154--196, 2005.

\bibitem{Scheutzow}
M.~Scheutzow.
\newblock A stochastic {G}ronwall lemma.
\newblock {\em Infin. Dimens. Anal. Quantum Probab. Relat. Top.},
  16(2):1350019, 4, 2013.

\bibitem{Stein}
E.~M. Stein.
\newblock {\em Singular integrals and differentiability properties of
  functions}.
\newblock Princeton Mathematical Series, No. 30. Princeton University Press,
  Princeton, N.J., 1970.

\bibitem{Veretennikov}
A.~J. Veretennikov.
\newblock Strong solutions and explicit formulas for solutions of stochastic
  integral equations.
\newblock {\em Mat. Sb. (N.S.)}, 111(153)(3):434--452, 480, 1980.

\bibitem{ZhangXC5}
P.~Xia, L.~Xie, X.~Zhang, and G.~Zhao.
\newblock {$L^q(L^p)$}-theory of stochastic differential equations.
\newblock {\em Stochastic Process. Appl.}, 130(8):5188--5211, 2020.

\bibitem{Xie}
L.~Xie and X.~Zhang.
\newblock Sobolev differentiable flows of {SDE}s with local {S}obolev and
  super-linear growth coefficients.
\newblock {\em Ann. Probab.}, 44(6):3661--3687, 2016.

\bibitem{ZhangQ}
Q.~S. Zhang.
\newblock Gaussian bounds for the fundamental solutions of {$\nabla (A\nabla
  u)+B\nabla u-u_t=0$}.
\newblock {\em Manuscripta Math.}, 93(3):381--390, 1997.

\bibitem{ZhangXC3}
X.~Zhang.
\newblock Strong solutions of {SDES} with singular drift and {S}obolev
  diffusion coefficients.
\newblock {\em Stochastic Process. Appl.}, 115(11):1805--1818, 2005.

\bibitem{ZhangXC4}
X.~Zhang.
\newblock Stochastic homeomorphism flows of {SDE}s with singular drifts and
  {S}obolev diffusion coefficients.
\newblock {\em Electron. J. Probab.}, 16:no. 38, 1096--1116, 2011.

\bibitem{ZhangXC2}
X.~Zhang.
\newblock Well-posedness and large deviation for degenerate {SDE}s with
  {S}obolev coefficients.
\newblock {\em Rev. Mat. Iberoam.}, 29(1):25--52, 2013.

\bibitem{Zvonkin}
A.~K. Zvonkin.
\newblock A transformation of the phase space of a diffusion process that will
  remove the drift.
\newblock {\em Mat. Sb. (N.S.)}, 93(135):129--149, 152, 1974.

%\bibitem {BassChen}Bass, Richard F. and Chen, Zhen-Qing.\ Brownian motion with singular drift. \ {\sl Ann. Probab.}, {31(2)}: 791--817, 2003.
%\bibitem {Beck}Beck, Lisa and Flandoli, Franco and Gubinelli, Massimiliano and Maurelli, Mario.\ Stochastic {ODE}s and stochastic linear {PDE}s with critical drift: regularity, duality and uniqueness. \ {\sl Electron. J. Probab.}, {24}: Paper No. 136, 72, 2019.
%\bibitem {Wang}Chen Wang, Saisai Yang, Tusheng Zhang.\ Elliptic equations associated with Brownian motion with singular drift. \ {\sl Electron. J. Probab.}
%\bibitem {Friedman1}Friedman, Avner.\ Stochastic differential equations and applications. \ {V}ol. 2. Academic Press [Harcourt Brace Jovanovich, Publishers], New York-London, 1976. Probability and Mathematical Statistics, Vol. 28.
%\bibitem {Gyongy}Gy\"{o}ngy, Istv\'{a}n and Mart\'{\i}nez, Teresa.\ On stochastic differential equations with locally unbounded drift \ {\sl Czechoslovak Math. J.}, {51(126)(4)}:763-783, 2001.
%\bibitem {Jin}Jin, Peng.\ Brownian motion with singular time-dependent drift. \ {\sl J. Theoret. Probab.}, {30(4)}:1499-1538, 2017.
%\bibitem {KimSong}Kim, Panki and Song, Renming.\ Two-sided estimates on the density of {B}rownian motion with singular drift. \ {\sl Illinois J. Math.}, {50(1-4)}:635-688, 2006.
%\bibitem {Kinzebulatov}Kinzebulatov, D. and Sem\"{e}nov, Yu. A.\ Brownian motion with general drift. \ {\sl Stochastic Process. Appl.}, {130(5)}:2737-2750, 2020.
%\bibitem {Krylov2}Krylov, N. V.\ Estimates of the maximum of the solution of a parabolic equation and estimates of the distribution of a semimartingale. \ {\sl Mat. Sb. (N.S.)}, {130(172)}:207-221, 284, 1986.
%\bibitem {Krylov}Krylov, N. V. and R\"{o}ckner, M.\ Strong solutions of stochastic equations with singular time dependent drift.\ {\sl Probab. Theory Related Fields}, {131(2)}:154-196, 2005.
%\bibitem {Scheutzow}Scheutzow, Michael.\ A stochastic {G}ronwall lemma. \ {\sl Infin. Dimens. Anal. Quantum Probab. Relat. Top.}, {16(2)}:1360019, 4, 2013.
%\bibitem {Stein}Stein, Elias M.\ Singular integrals and differentiability properties of functions.\ {\sl Princeton Mathematical Series, No. 30. Princeton University Press, Princeton, N.J.}, 1970.
%\bibitem {Veretennikov}Veretennikov, A. Ju.\ Strong solutions and explicit formulas for solutions of stochastic integral equations.\ {\sl Mat. Sb. (N.S.)}, {111(153)(3)}:434-452, 480, 1980.
%\bibitem {ZhangXC5}Xia, Pengcheng and Xie, Longjie and Zhang, Xicheng and Zhao,
%              Guohuan.\ {$L^q(L^p)$}-theory of stochastic differential equations. \ {\sl Stochastic Process. Appl.}, {130(8)}:5188-5211, 2020.
%\bibitem {Xie}Xie, Longjie and Zhang, Xicheng.\ Sobolev differentiable flows of {SDE}s with local {S}obolev and super-linear growth coefficients.\ {\sl Ann. Probab.}, {44(6)}:3661-3687, 2016.
%\bibitem {ZhangQ}Zhang, Qi S.\ Gaussian bounds for the fundamental solutions of {$\nabla (A\nabla u)+B\nabla u-u_t=0$}.\ {\sl Manuscripta Math.}, {93(3)}:381-390, 1997.
%\bibitem {ZhangXC3}Zhang, Xicheng\ Strong solutions of {SDES} with singular drift and {S}obolev diffusion coefficients\ {\sl Stochastic Process. Appl.}, {115(11)}:1805-1819, 2005.
%\bibitem {ZhangXC4}Zhang, Xicheng.\ Stochastic homeomorphism flows of {SDE}s with singular drifts and {S}obolev diffusion coefficients.
% \ {\sl Electron. J. Probab.}, {16:no.38},1096-1116, 2011.
%\bibitem {ZhangXC2}Zhang, Xicheng.\ Well-posedness and large deviation for degenerate {SDE}s with {S}obolev coefficients.\ {\sl Rev. Mat. Iberoam.}, {29(1)}: 25-52, 2013.
%\bibitem {Zvonkin}Zvonkin, A. K.\ A transformation of the phase space of a diffusion process that will remove the drift.\ {\sl Mat. Sb. (N.S.)}, {93(135)}:129-149, 152,1974.

\end{thebibliography}
\end{document}